\documentclass[a4paper, 12pt]{amsart}

\usepackage[french, english]{babel}
\usepackage[normalem]{ ulem }
\usepackage{soul}

\usepackage[T1]{fontenc}
\usepackage[utf8]{inputenc}
\usepackage{amsmath,amsfonts,amssymb,amsopn,amscd,amsthm}
\usepackage{comment}
\usepackage{dsfont}
\usepackage{graphicx}
\usepackage{color}
\usepackage[colorlinks]{hyperref}
\usepackage{epigraph}
\usepackage{enumerate}

\usepackage{bookmark}

\setlength{\hoffset}{0cm}
\setlength{\textwidth}{16cm}
\setlength{\voffset}{0cm}
\setlength{\textheight}{25cm}
\setlength{\oddsidemargin}{0cm}
\setlength{\evensidemargin}{0cm}

\usepackage{marginnote}
\usepackage{todonotes}
\usepackage[left=3.5cm, top=2.5cm, bottom=2cm, right=4cm, marginparwidth=3cm]{geometry}

\title[ Matsumoto--Yor on Jordan algebras ]
      { Matsumoto--Yor processes on Jordan algebras }


\author[Chhaibi]{\textsc{Reda Chhaibi}}
\address{R. C. : Universit\'e C\^ote d'Azur, CNRS, LJAD\\
Parc Valrose\\
06108 Nice Cedex 02, France}
\email{{\tt reda.chhaibi@univ-cotedazur.fr}}

\author[Defosseux]{\textsc{Manon Defosseux}} 
\address{M. D. : Universit\'e Paris Cité, MAP5, 45 Rue des Saints-Pères, 75006, Paris, France}
\email{{\tt manon.defosseux@parisdescartes.fr}}

\date{\today}

\allowdisplaybreaks[4]


\DeclareMathOperator{\Ad}{Ad}

\DeclareMathOperator{\tr}{tr}


\DeclareMathOperator{\eqlaw}{\stackrel{\Lc}{=}}


\DeclareMathOperator{\id}{id}
\DeclareMathOperator{\Tr}{Tr}

\DeclareMathOperator{\Lrm}{\mathrm L}
\DeclareMathOperator{\Prm}{\mathrm P}

\def\half{\frac{1}{2}}

\def\1{{\mathbf 1}}

\def\b{\beta}


\def\C{{\mathbb C}}
\def\H{{\mathbb H}}
\def\N{{\mathbb N}}

\def\R{{\mathbb R}}
\def\S{{\mathbb S}}

\def\P{{\mathbb P}}
\def\E{{\mathbb E}}
\def\F{{\mathbb F}}

\def\Ac{{\mathcal A}}

\def\Dc{{\mathcal D}}

\def\Fc{{\mathcal F}}

\def\Lc{{\mathcal L}}

\def\Oc{{\mathcal O}}


\def\afrak{{\mathfrak a}}

\def\gfrak{{\mathfrak g}}

\def\kfrak{{\mathfrak k}}
\def\nfrak{{\mathfrak n}}

\def\pfrak{{\mathfrak p}}

\setlength{\footskip}{2cm}

\numberwithin{equation}{section}

\newtheorem{thm}{Theorem}[section]
\newtheorem{proposition}[thm]{Proposition}

\newtheorem{definition}[thm]{Definition}

\newtheorem{lemma}[thm]{Lemma}

\newtheorem{rmk}[thm]{Remark}

\newcommand{\ba}{\begin{array}}
\newcommand{\ea}{\end{array}}
\newcommand{\be}{\begin{equation}}
\newcommand{\ee}{\end{equation}}
\newcommand{\bea}{\begin{eqnarray}}
\newcommand{\eea}{\end{eqnarray}}
\newcommand{\beaa}{\begin{eqnarray*}}
\newcommand{\eeaa}{\end{eqnarray*}}
\newcommand{\ignore}[1]{}
\newcommand{\vertiii}[1]{{\left\vert\kern-0.25ex\left\vert\kern-0.25ex\left\vert #1 
    \right\vert\kern-0.25ex\right\vert\kern-0.25ex\right\vert}}
    
\begin{document}

\begin{abstract}
  The process $(\int_0^t e^{2b_s-b_t}\, ds\ ;\ t\ge 0)$, where $b$ is a real Brownian motion, is known as the geometric 2M-X Matsumoto--Yor process. Remarkably, it enjoys the Markov property. We provide a generalization of this process in the context of Jordan algebras, and we prove the Markov property for this generalization.
 
  Our Markov process occurs as a limit of discrete-time AX+B Markov chains on the cone of squares whose invariant probability measures classically yield a Dufresne-type identity for a perpetuity. In particular, the paper provides a generalization to any symmetric cone of the matrix--valued generalization of the Matsumoto--Yor process and Dufresne identity initially developed by Rider--Valk\'o.
\end{abstract} 
\keywords{Pitman's theorem, Brownian motion on Lie groups and symmetric spaces, Matsumoto--Yor property, Intertwining of semi-groups, Jordan algebras.}

\maketitle 

\setcounter{tocdepth}{2}
\hrule 
\tableofcontents
\hrule

\section{Introduction}
\subsection{Background and literature}
For any stochastic process $X$, we denote its filtration by $\F^X = \left( \Fc^X_t \ ; \ t \geq 0 \right)$.

Consider a real number $\nu$ and a standard real Brownian motion $b = (b_t \ ; \ t\ge 0)$. We denote by $b^{(\nu)}$ this same Brownian motion with an added drift $\nu$. Pitman's theorem \cite{pitman1975one, rogers1981markov} is a seminal result that has attracted significant attention in recent decades. 

\begin{thm}[Pitman's Theorem \cite{pitman1975one}]
\label{thm:pitman}
The process $\lambda = \left( \lambda_t \ ; \ t \geq 0 \right)$ defined for $t \geq 0$ by
$$
   \lambda_t := b_t^{(\nu)} - 2 \inf_{0 \leq s \leq t} b_s^{(\nu)} \ ,
$$
is a Markov process, whose natural filtration is strictly contained in that of $b$, i.e. for all $t > 0$, $\Fc_t^\lambda \varsubsetneq \Fc_t^b$.

Furthermore, we have the conditional distribution
$$
   \P\left( b^{(\nu)} \in dx \ | \ \Fc_t^\lambda, \ \lambda_t = \lambda \right) 
   = \frac{\nu}{2\sinh(\nu \lambda)} \exp\left( \nu x \right) \mathds{1}_{\{ -\lambda \leq x \leq \lambda \}} dx \ .
$$
\end{thm}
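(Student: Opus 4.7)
I would deploy the Rogers--Pitman intertwining-of-semigroups criterion from \cite{rogers1981markov}, which is particularly well-suited to the spirit of this paper. The argument first reduces to zero drift by Girsanov, and then verifies an intertwining of a kernel with semigroups. For the Girsanov step, under the change of measure with density $\exp(\nu b_t - \nu^2 t/2)$ on $\Fc_t^b$, the drifted Brownian motion $b^{(\nu)}$ becomes a standard Brownian motion, while the functional $\lambda_t$, being a path-measurable functional of $b^{(\nu)}$, is unaffected. The exponential factor $e^{\nu x}$ in the stated conditional density is exactly the Radon--Nikodym derivative evaluated at $b_t = x$, and the prefactor $\nu/(2\sinh \nu\lambda)$ is the corresponding normalisation constant on $[-\lambda,\lambda]$. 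It therefore suffices to prove the theorem for $\nu = 0$, where the conditional law degenerates to the uniform distribution on $[-\lambda,\lambda]$.

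\textbf{Intertwining for driftless Brownian motion.} In the driftless case, the natural candidate Markov process for $\lambda$ is the three-dimensional Bessel process. Introduce the Markov kernel
\[
K(\lambda, dx) = \frac{1}{2\lambda}\mathds{1}_{[-\lambda,\lambda]}(x)\,dx,
\]
let $P_t$ denote the Brownian semigroup and $Q_t$ the Bessel$(3)$ semigroup. I would verify the intertwining identity $K P_t = Q_t K$ either directly from the explicit transition densities, or, more cleanly, at the level of generators by checking
\[
K\bigl(\tfrac{1}{2}\partial_x^2 f\bigr) = \bigl(\tfrac{1}{2}\partial_\lambda^2 + \lambda^{-1}\partial_\lambda\bigr)(Kf)
\]
on a core of smooth test functions, which boils down to an integration-by-parts computation. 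In parallel, the joint law of $(b_t, \inf_{s\le t} b_s)$ supplied by the reflection principle, after the change of variables $(b_t, \lambda_t)$, shows that the \emph{one-time} conditional density of $b_t$ given $\lambda_t$ is exactly $K(\lambda_t,\cdot)$.

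\textbf{Conclusion and main obstacle.} Combining the one-time identification with the intertwining, the Rogers--Pitman criterion then yields both the Markov property of $\lambda$ with semigroup $Q_t$ and the filtration-wise conditional identity $\E\bigl[f(b_t) \mid \Fc_t^\lambda\bigr] = (Kf)(\lambda_t)$ of the statement; the strict containment $\Fc_t^\lambda \varsubsetneq \Fc_t^b$ is then immediate from the non-degeneracy of $K(\lambda_t,\cdot)$. The principal subtlety is precisely this upgrade from one-time conditioning to filtration-wise conditioning: it is not a computation but the conceptual heart of the Rogers--Pitman criterion, and is proved by iterating the intertwining through the Markov property via the tower rule. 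The Girsanov translation of this conclusion back to the drifted setting is then routine.
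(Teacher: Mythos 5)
The paper does not actually prove Theorem \ref{thm:pitman}: it is stated as classical background and attributed to \cite{pitman1975one, rogers1981markov}. Your proposal is a correct reconstruction of the Rogers--Pitman proof, and it is precisely the template the paper follows for its own main results (compare your kernel $K(\lambda,dx)=\frac{1}{2\lambda}\mathds{1}_{[-\lambda,\lambda]}(x)\,dx$ with the kernel $\mathcal K$ of Theorem \ref{thm:intertwining}): Girsanov reduction to $\nu=0$, identification of the one-time conditional law via the reflection principle, intertwining with the Bessel$(3)$ semigroup, and the Rogers--Pitman upgrade from one-time to filtration-wise conditioning. One point deserves more care than your sketch gives it: $\lambda_t$ is not a function of the Markov process $b$ alone but of the two-dimensional Markov process $(b_t,\inf_{s\le t}b_s)$, so the hypothesis of \cite[Theorem 2]{rogers1981markov} must be verified for the semigroup of the joint process $(b_t,\lambda_t)$ with the kernel $\Lambda(\lambda,d(x,\lambda'))=\delta_\lambda(d\lambda')\,\frac{1}{2\lambda}\mathds{1}_{[-\lambda,\lambda]}(x)\,dx$, not merely the scalar identity $KP_t=Q_tK$ that your generator computation establishes (that identity is correct and is a shadow of the joint intertwining, but it is not literally the hypothesis of the criterion). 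With that adjustment the argument closes; the drifted case then follows either by your Girsanov transfer --- noting that the projected density $\E\bigl[e^{\nu b_t}\mid\Fc_t^\lambda\bigr]e^{-\nu^2t/2}=\frac{\sinh(\nu\lambda_t)}{\nu\lambda_t}e^{-\nu^2t/2}$ is a function of $(\lambda_t,t)$ alone, so the Markov property survives the change of measure --- or, more in the spirit of the paper, by running the same intertwining directly with the $\nu$-dependent kernel appearing in the statement.
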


There are multiple points of view for understanding this theorem. These perspectives can be divided into two categories: combinatorial and probabilistic points of view where the group $SL_2$ is not involved, and the geometric or representation-theoretic points of view where $SL_2$ plays a crucial role.

\medskip

In \cite{matsumoto2000analogue} and \cite{matsumoto2001analogue} Hiroyuki Matsumoto and Marc Yor give a geometric version of Pitman's theorem. The Generalized Inverse Gaussian (GIG) distribution on $\R_+^*$ appears naturally in that setting. The definition of the $\mathrm{GIG}_{\R_+^*}(\nu ; a, b)$ distribution is \cite[Eq. (9.1)]{matsumoto2001analogue}
\begin{align}
\label{def:GIG}
\mathrm{GIG}_{\R_+^*}(\nu ; a, b)(dx) & =
\frac{\left( \frac{b}{a} \right)^\nu x^\nu}{2 K_\nu(ab)}
\exp\left( -\half \left( a x + \frac{b}{x} \right) \right) \mathds{1}_{\{x \in \R_+^* \}} \frac{dx}{x} \ .
\end{align}
Here the normalization constant is given by $K_\nu$, the Bessel $K$-function, also known as the Macdonald function.
 
\begin{thm}[Matsumoto--Yor \cite{matsumoto2000analogue}]
\label{thm:matsutomo_yor}
The process $Z=(Z_t \ ; \ t\ge 0)$ defined by
$$Z_t := \exp( b_t^{(\nu)} )\int_{0}^{t} \exp\left( -2 b_s^{(\nu)} \right) ds \ , \ t\ge 0,$$
is a Markov process, whose natural filtration is strictly contained in that of $b$, i.e. for all $t > 0$, $\Fc_t^Z \varsubsetneq \Fc_t^b$.

Furthermore, we have the conditional distribution
$$
   \P\left( \exp\left( b^{(\nu)} \right) \in dx \ | \ \Fc_t^Z, \ Z_t = z \right) 
   = \mathrm{GIG}_{\R_+^*}\left( \nu ; \frac1z , \frac1z \right)(dx) \ .
$$ 
\end{thm}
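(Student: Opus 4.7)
The plan is to prove this via an intertwining of Markov semigroups, in the spirit of Rogers--Pitman's approach to Pitman's theorem and of Kurtz's Markov mapping theorem. Set $X_t := \exp(b_t^{(\nu)})$ and $A_t := \int_0^t X_s^{-2}\, ds$, so that $Z_t = X_t A_t$ and the pair $(X, A)$ is a Markov diffusion on $\R_+^* \times \R_+$ with generator
\begin{equation*}
\Lc_{X, A} = \half x^2 \partial_x^2 + \left( \nu + \half \right) x \partial_x + x^{-2} \partial_a.
\end{equation*}
By Itô's formula, $dZ_t = Z_t\, db_t + [(\nu + \half)Z_t + X_t^{-1}]\, dt$, so the drift of $Z$ contains the non-$\Fc_t^Z$-measurable quantity $X_t^{-1}$; this is exactly the obstruction that the Matsumoto--Yor theorem miraculously overcomes.

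The central construction is the GIG lifting kernel
\begin{equation*}
\Lambda(z, dx\, da) := K(z, dx)\, \delta_{z/x}(da), \qquad K(z, dx) := \mathrm{GIG}_{\R_+^*}(\nu; 1/z, 1/z)(dx),
\end{equation*}
supported on the fiber $\{xa = z\}$. The intertwining criterion to verify is that, for every test function $g$ of the single variable $z$, the quantity $\int \Lc_{X,A}(g \circ \phi)\, d\Lambda(z, \cdot)$ depends only on $z$, where $\phi(x, a) := xa$. A short computation yields
\begin{equation*}
\Lc_{X,A}(g \circ \phi)(x, a) = \half (xa)^2 g''(xa) + \left[ \left(\nu + \half\right) xa + x^{-1} \right] g'(xa),
\end{equation*}
which integrated against $\Lambda(z, \cdot)$ becomes
\begin{equation*}
\half z^2 g''(z) + \left[ \left(\nu + \half\right) z + \frac{K_{\nu-1}(1/z)}{K_\nu(1/z)} \right] g'(z) =: \Lc_Z g(z),
\end{equation*}
using the classical GIG moment identity $\int x^{-1}\, \mathrm{GIG}_{\R_+^*}(\nu; a, a)(dx) = K_{\nu-1}(a)/K_\nu(a)$. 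So the intertwining holds for structural reasons: $\Lc_{X,A}$ carries no derivatives in $a$ beyond first order, and the first-order drift in $a$ depends only on $x$, which allows the $a$-variable to be traded for $z$ using the delta factor.

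The remaining obstacle is the initial coherence required by Rogers--Pitman: one needs $\Lambda(Z_0, \cdot)$ to match $\mathrm{Law}(X_0, A_0) = \delta_{(1, 0)}$. At $z = 0^+$ the factor $\delta_{z/x}$ collapses to $\delta_0$ in the $a$-variable, while a Laplace / saddle-point analysis of the density $\propto x^{\nu-1} \exp(-(x + x^{-1})/(2z))$ shows that $K(z, \cdot) \to \delta_1$ as $z \to 0^+$, since the minimum of $x + x^{-1}$ on $\R_+^*$ is attained at $x = 1$. The Rogers--Pitman / Kurtz conclusion is then that $Z$ is Markov with generator $\Lc_Z$ and that $\P(X_t \in dx \mid \Fc_t^Z) = K(Z_t, dx)$, which is the asserted GIG conditional law; the strict inclusion $\Fc_t^Z \varsubsetneq \Fc_t^b$ follows from the non-atomicity of $K(z, \cdot)$. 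The main conceptual obstacle, and what the present paper's $AX+B$ / Dufresne discrete-time framework addresses, is the \emph{identification} of the GIG kernel $K$ itself; once guessed, the intertwining verification is short and essentially structural.
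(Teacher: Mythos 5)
There is a genuine gap at the heart of your argument. The ``intertwining criterion'' you verify --- that $\int \Lc_{X,A}(g\circ\phi)\,d\Lambda(z,\cdot)$ depends only on $z$ --- is vacuous: integrating any function against $\Lambda(z,\cdot)$ produces a function of $z$ by construction. What your computation actually accomplishes is to identify the \emph{candidate} generator $\Lc_Z$ (and your Bessel bookkeeping there is correct). But the Rogers--Pitman / Kurtz hypothesis is the full intertwining $\Lambda\,\Lc_{X,A} f = \Lc_Z\, \Lambda f$ for all $f(x,a)$ in a core, not merely for $f = g\circ\phi$ (for which $\Lambda f = g$ and the identity is a definition, not a verification). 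Equivalently, one must prove that the conditional law of $(X_t,A_t)$ given $\Fc_t^Z$ actually \emph{is} $\Lambda(Z_t,\cdot)$, i.e.\ that the GIG family is propagated by the filtering dynamics; without this, the appearance of $E[X_t^{-1}\mid\Fc_t^Z]$ in the drift of $Z$ remains unresolved. To see that this is not ``structural,'' test $f(x,a)=x$: then $\Lc_{X,A}f=(\nu+\tfrac12)x$ and $\Lambda f(z)=m(z):=K_{\nu+1}(1/z)/K_\nu(1/z)$, so the intertwining demands $\tfrac12 z^2 m''+\bigl[(\nu+\tfrac12)z+\tfrac{K_{\nu-1}}{K_\nu}(1/z)\bigr]m'=(\nu+\tfrac12)m$, a nontrivial Bessel identity. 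Checking such identities on a core (or an equivalent distributional input) is precisely where the Matsumoto--Yor ``miracle'' lives, and it is missing from your proposal. A secondary but real issue is the entrance condition: $(X_0,A_0)=(1,0)$ sits on the boundary $z=0$, and the Rogers--Pitman matching of initial laws requires more than the heuristic $K(z,\cdot)\to\delta_1$.

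For comparison, the present paper does not prove Theorem \ref{thm:matsutomo_yor} by a direct continuous-time generator computation at all: it establishes an \emph{exact} one-step distributional identity for GIG variables (Proposition \ref{first-step}), deduces a discrete-time intertwining of kernels (Theorem \ref{thm:intertwining}), and only then passes to continuous time by an invariance principle. That exact GIG identity is the substitute for the family of Bessel identities your approach would need; supplying either one would close your gap.
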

This theorem is a generalization of Pitman's Theorem \ref{thm:pitman} by considering the process $\left( \frac{1}{c}\log Z_{c^2t} \ ; \ t \geq 0 \right)$ for $c>0$ and taking the limit $c \rightarrow 0$. The Pitman transform is recovered as a limiting case by Brownian scaling and the Laplace method. Also, in the proof of Theorem \ref{thm:matsutomo_yor}, an important role is played by the  integral process $\iota = (\iota_t \ ; \ t\ge 0)$ and the Markov process $\ell = ( \ell_t \ ; \ t\ge 0)$ defined by 
$$
  \iota_t :=  \int_0^t \exp\left( -2 b_s^{(\nu)} \right) ds \ ,
  \textrm{ and }\,
  \ell_t := \exp\left( 2 b_t^{(\nu)} \right) \int_0^t \exp\left( -2 b_s^{(\nu)} \right) ds \ .
$$
The integral process arises in several contexts, in particular in mathematical finance. In this context, Daniel Dufresne \cite{dufresne1990distribution} established that if $\nu>0$, then
the perpetuity 
$$ 2 \int_0^\infty \exp\left( -2 b_s^{(\nu)} \right) ds \overset{\Lc}
   =
   \frac{1}{\gamma_\nu} \ ,$$ 
in which $\gamma_\nu$ has the Gamma($\nu$) distribution. The Brownian scaling and the Laplace method can be used to rescale the integral process $\iota$ and the Markov process $\ell$ respectively. This results in the infimum process and in the Lévy tranform process of the Brownian motion. Of course, the choice of letter $\ell$ stands for ``Lévy'' and our naming conventions shall stand throughout the paper.

\medskip

In \cite{chhaibi2013littelmann}, the first author gives a geometric or representation-theoretic interpretation of the Matsumoto--Yor process where $SL_2$ plays a key role. Actually a major role is played by the left-invariant process taking values in $\H^3 \approx SL_2(\C) / SU_2 $ defined by
\begin{align}
    d B_t = B_t \circ
    \begin{pmatrix}
        db_t^{(\nu)} & 0 \\
        dt  & -db_t^{(\nu)}
    \end{pmatrix},
    \ 
\end{align}
which is readily solved in the form
$$ 
  B_t = 
  \begin{pmatrix}
    \exp\left( b_t^{(\nu)} \right) & 0 \\
    Z_t & \exp\left( -b_t^{(\nu)} \right)
\end{pmatrix}
\ .
$$
The role of the hyperbolic space $\H^3$ was further revealed in \cite{chapon2021quantum}, together with the representation theory of the quantum group associated to $SL_2$. A simpler yet remarkable fact is that $B_t = N_t A_t = A_t \widetilde{N}_t$, where $N = \left( N_t \ ; \ t\ge 0 \right)$ and $\widetilde{N} = \left( \widetilde{N}_t \ ; \ t\ge 0 \right)$ are lower triangular matrix processes containing respectively the integral process $\iota$ and the Markov process $\ell$.

\medskip

In \cite{rider2016matrix}, Brian Rider and Benedek Valk\'o introduce a Matsumoto--Yor matrix process which involves a Brownian motion on the general linear group $GL_r(\R)$ for $r\ge 1$. To state their theorem, let us consider a standard left Brownian motion $(g_t \ ; \ t\ge 0)$ on $GL_r(\R)$ with drift $\nu \id_{M_r(\R)}$, for $\nu \in \R$, where the set of $r\times r$ real matrices is denoted by $M_r(\R)$. The subset  of positive definite symmetric real matrices is denoted by $S_r^+(\R)$. The generalization of GIG distributions takes the form
\begin{align}
\label{def:GIG_matrix}
  & \ \mathrm{GIG}_{S_r^+(\R)}(\nu ; a, b)(dx) \\
= & \ \frac{ \det(x)^{\nu} }{2 K\left( \nu ;  a, b \right)}
   \exp\left( -\frac{1}{2}\Tr( a x + b x^{-1}) 
       \right) 
\mathds{1}_{\{x \in S_r^+(\R) \}} 
\frac{dx}{\det(x)^{ \frac{r+1}{2}}} \ ,
\nonumber
\end{align}
where $a$, $b$ are this time matrices in $S_r^+(\R)$. Here, up to normalization, $dx$ is the Lebesgue measure on the Euclidean space of symmetric matrices and $dx / {\det(x)^{ \frac{r+1}{2}}}$ is the conjugation invariant measure. The normalizing constant is given by the function $K\left( \nu ; a, b \right)$ which belongs to the Bessel-like family introduced by Carl Herz in 1955 \cite{herz1955bessel}. See \cite{terras2012harmonic} for further references on the role of this special function. 

\begin{thm}[Rider--Valk\'o \cite{rider2016matrix}] 
\label{thm:VR}
The process $Z = (Z_t \ ; \ t\ge 0)$ defined for $t \geq 0$ by
$$ Z_t := \left(\int_0^t (g_s g_s^*)^{-1} ds
          \right) g_t \ ,$$
is a Markov process, whose natural filtration is strictly contained in that of $g$, i.e. for all $t > 0$, $\Fc_t^Z \varsubsetneq \Fc_t^g$.

Furthermore, the conditional law of $g^*_t Z_t$ given $\left( Z_s \ ; \ s \le t \right)$ is a matrix GIG distribution on $S_r^+(\mathbb R)$ given by
\begin{align*}
     \ \P\left( g^*_t Z_t \in dx \ | \ \Fc_t^Z, \ Z_t = z \right)
 = & \ \mathrm{GIG}_{S_r^+(\R)}(\nu ; (z^*z)^{-1}, \id)(dx) \ .
\end{align*}
\end{thm}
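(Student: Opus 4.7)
The plan is to apply the Rogers--Pitman intertwining criterion. First I would use Itô's formula to write down the joint SDE for the pair $(Z_t, Y_t)$ on $GL_r(\R)\times S_r^+(\R)$, where $Y_t := g_t^* Z_t$ and $A_t := \int_0^t (g_s g_s^*)^{-1}ds$. Since $A_t$ has finite variation with $dA_t = (g_t g_t^*)^{-1} dt$, the factor $A_t$ contributes the clean drift $g_t^* (g_t g_t^*)^{-1} g_t\, dt = I\,dt$ to $dY_t$, the matrix analogue of the scalar drift $dt$ in the classical process $\ell_t$. Using the algebraic identity $g_t = Z_t^{-*} Y_t$, which follows from $Z_t = A_t g_t$ and $Y_t = Z_t^* A_t^{-1} Z_t$, one re-expresses all drift and diffusion coefficients of $(Z_t, Y_t)$ as functions of $(Z_t, Y_t)$ alone, which ensures that the pair is jointly Markov on $GL_r(\R) \times S_r^+(\R)$.

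Next, let
\[
   K(z,dy) := \mathrm{GIG}_{S_r^+(\R)}\!\bigl(\nu;\,(z^*z)^{-1},\,I\bigr)(dy)
\]
denote the candidate conditional kernel from \eqref{def:GIG_matrix}, and let $\Lc$ be the generator of the joint Markov pair $(Z,Y)$. I would postulate a candidate generator $L$ for $Z$ alone on $GL_r(\R)$ and verify the Rogers--Pitman intertwining relating $\Lc$, $K$ and $L$. Concretely, for every smooth test function $f$ on $GL_r(\R)$, one must check that applying $\Lc$ to the lifted function $(z,y)\mapsto f(z)$ yields $(Lf)(z)$ independently of $y$, and that $K(z,\cdot)$ is invariant under the conditional $Y$-dynamics given $Z$. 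This verification rests on the second-order matrix PDE satisfied by Herz's matrix Bessel function $K(\nu;a,b)$ and on integration by parts on the cone against the invariant measure $dx/\det(x)^{(r+1)/2}$.

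Granted the intertwining, the Rogers--Pitman theorem yields both the Markov property of $Z$ under $L$ and the announced conditional law of $g_t^* Z_t$ given $\Fc_t^Z$. The strict inclusion $\Fc_t^Z \varsubsetneq \Fc_t^g$ is then visible from the fact that the orthogonal part of $g_t$ in its polar decomposition is lost in the product $Z_t = A_t g_t$ and cannot be recovered from $Z_t$ alone.

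The main obstacle I anticipate is the intertwining verification itself: the scalar identities of the classical Matsumoto--Yor case become matrix ones, involving the Laplacian on the symmetric cone, Herz's matrix Bessel function, and careful measure-theoretic bookkeeping with $dx/\det(x)^{(r+1)/2}$. An alternative route, consistent with the programme described in the abstract, is to bypass these PDE manipulations by approximating $g$ by an AX+B random walk on $GL_r(\R)$ and extracting both the Markov property and the GIG conditional law from a matrix Dufresne identity for the stationary measure of the chain, together with convergence of the discrete chains.
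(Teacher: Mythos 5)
Your outline is correct in architecture but follows a genuinely different route from the one this paper takes. In the paper, Theorem \ref{thm:VR} itself is only quoted with pointers to Rider--Valko; the paper's actual proof of the (more general) statement --- Theorems \ref{thm:main_discrete} and \ref{thm:main_continuous} specialized to $E=S_r(\R)$ --- is exactly the route you relegate to your final sentence: one first establishes an elementary conditional identity for GIG laws by a change of variables on the cone (Lemma \ref{cov} and Proposition \ref{first-step}), deduces a Rogers--Pitman intertwining of \emph{discrete} Markov kernels (Theorem \ref{thm:intertwining}), and only then passes to the diffusion by a Stroock--Varadhan invariance principle (Proposition \ref{proposition:conv} and Appendix \ref{section:stroock_varadhan}). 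This replaces the generator-level verification you propose --- the second-order PDE for Herz's Bessel function plus integration by parts against $dx/\det(x)^{(r+1)/2}$ --- by a single Jacobian computation, which is precisely what lets the paper treat all symmetric cones uniformly. Your primary route (joint SDE for $(Z_t,Y_t)$ with the correct observations $g_t^*\,dA_t\,g_t=I\,dt$ and $g_t=Z_t^{-*}Y_t$, then an intertwining of generators) is closer to Rider--Valko's original argument and is viable, but as written the decisive step --- verifying the intertwining against the GIG kernel --- is only named, not performed; that verification \emph{is} the theorem. Two smaller points: the joint Markov property of $(Z,Y)$ is more cleanly obtained from the left-increment structure of $g$ (as the paper does for $(\lambda,\ell)$ in Subsection \ref{subsection:proofs_theo_cont_1}) than from re-expressing SDE coefficients; and the strict inclusion $\Fc_t^Z\varsubsetneq\Fc_t^g$ does not follow from your single-time polar-decomposition heuristic (which only shows $g_t$ is not a function of $Z_t$), but rather from the non-degeneracy of the conditional law of $g_t^*Z_t$ given the whole past $(Z_s\ ;\ s\le t)$ --- a GIG law rather than a Dirac mass --- which is the second assertion of the theorem.
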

\begin{proof}[Pointers to proof]
The Markov property appears as \cite[Theorem 5]{rider2016matrix}, along with the expression of the infinitesimal generator.
The definition of the matrix GIG distribution is given as \cite[Eq. (18)]{rider2016matrix}. And the conditional distribution is stated at the beginning of \cite[page 182]{rider2016matrix}.
\end{proof}
Notice that indeed the Matsumoto--Yor Theorem \ref{thm:matsutomo_yor} is recovered when setting $r=1$ in the Rider--Valk\'o Theorem \ref{thm:VR}. Less obvious is the fact that the theorem and its proof imply in particular that the ``squared'' process $\left(Z^*_tZ_t \ ; \ t\ge 0\right)$ is Markovian on $S_r^+(\R)$. And based on the conditional distribution, it can be argued that the process $Z^* Z$ is more fundamental. Moreover, Rider and Valk\'o establish a matrix Dufresne identity, demonstrating that for $\nu>(r-1)/2$, the integral
\begin{align}
    \label{eq:valko_rider_perpetuity}
    \int_0^\infty (g_s g_s^*)^{-1} ds
\end{align}
is finite almost surely and distributed according to an inverse matrix Wishart distribution with parameter $2\nu$.

In passing let us mention the similarities between the Dufresne identity and the Bougerol identity \cite{bougerol1983exemples}. Namely, given two independent Brownian motions $\b^{(\nu)}$ and $\gamma^{(\mu)}$ with respective drifts $\nu>0$ and $\mu$, a variant of the Bougerol identity is that
$$
  \int_0^\infty e^{-b_s^{(\nu)}} d\gamma^{(\mu)}_s
$$
has density with respect to Lebesgue measure 
$$ \frac{ e^{-2\mu \arctan(x) } }{ (1 + x^2)^{\nu+ \half} } \ .$$
When $\mu=0$ and conditionally to $b^{(\nu)}$, the above random variable is a Gaussian with variance $\int_0^\infty e^{-2b_s^{(\nu)}} ds$, hence a clear relationship with the Dufresne identity.
The connection is further explored in \cite{assiotis2018matrix}, where Theodoros Assiotis introduces a matrix generalization of the Bougerol identity involving Hua-Pickrell measures. As Assiotis elaborates, his construction parallels the Rider–Valk\'o matrix generalization of the Dufresne identity involving inverse Wishart distributions \eqref{eq:valko_rider_perpetuity}.

In \cite{arista2024Matsumoto} Jonas Arista, Elia Bisi and Neil O'Connell introduce a discrete time version of the Rider--Valk\'o's process. Their results rest in particular on a Lukacs--Olkin--Rubin characterization of Wishart distributions on the set of positive definite symmetric real matrices \cite{casalis1996lukacs}. In \cite{herent2024discrete} Charlie H\'erent studies a discrete time version of the Matsomoto--Yor's process inspired from the approach of \cite{chhaibi2013littelmann} where the first author shows that the generalized inverse Gaussian distributions provide canonical laws on the subsets of $SL_2(\R)$ of lower triangular matrices with a fixed lower left corner. These distributions and Lie-theoretic generalization play a crucial role in \cite{chhaibi2013littelmann}, where early instances of discrete time Matsumoto--Yor processes appear in the general context of the geometric Littlewood--Richarson rule \cite[Theorems 5.6.8 and 5.6.9]{chhaibi2013littelmann}.


\subsection{Our objects of interest and contributions} 
As a preliminary step, we present Rider and Valk\'o's Theorem as a Matsumoto--Yor theorem in the setting developed by the first author, where matrices are replaced by block matrices. To us, this constitutes a first conceptual step toward defining a discrete-time version of the Rider--Valko's process that can be readily extended to Jordan algebras. Here the superscript $\flat$ indicates that we are working in the block setting. We start with an informal block dynamic written as $B^\flat=\left( B^\flat_t \ ; \ t \geq 0 \right)$, which is defined by 
\begin{align}\label{blockmod}
    d B^\flat_t & = B^\flat_t \circ
    \begin{pmatrix}
        db^\flat_t + \nu  \id_{M_r(\R)} dt & 0 \\
        dt  & -(db^\flat_t + \nu \id_{M_r(\R)} dt )^*
    \end{pmatrix},
    \ t\ge 0,
\end{align}
where $b^\flat = ( b^\flat_t \ ; \ t\ge 0)$ is the standard Brownian motion on $M_r(\R)$ and $\nu \in \R$.
More explicitly,
\begin{align}
    B^\flat_t & =
    \begin{pmatrix}
        g_t & 0 \\
        \left( 
            \int_0^t (g_s g_s^*)^{-1} ds
        \right) g_t
        & (g^*)^{-1}
    \end{pmatrix},
    \ t\ge 0,
\end{align} 
where $g = ( g_t \ ; \ t\ge 0)$ satisfies $d g_t=g^\flat_t\circ d(b_t+\nu \id_{M_r(\R)} t)$, i.e. $g$ is a drifted left Brownian motion on $GL_r(\R)$. Thus the Markov process $Z$ of Theorem \ref{thm:VR} appears in the lower left corner of $B^\flat$. In this block settings, the generalisations of the integral process $\iota$ and the Markov process $\ell$ are respectively
$$ \iota = \left( \iota_t := \int_0^t (g_s g_s^*)^{-1} ds \ ; \ t\ge 0 \right) $$
and $$ \ell = \left( \ell_t := g_t^* \left( \int_0^t (g_s g_s^*)^{-1} ds \right) g_t \ ; \ t\ge 0 \right)
  \ .
$$

\medskip

Upon replacing blocks by Jordan algebra elements, we have a unified point of view that highlights the essential ingredients. Only then could we identify natural Matsumoto--Yor and Rider--Valk\'o processes, both in the discrete and continuous time settings. Notice that in the discrete time setting, there are discrepancies with \cite{arista2024Matsumoto}.
For example, their Matsumoto--Yor process $S = \left( S(n) \ ; \ n \geq 0 \right)$ does not arise in our framework, as it is defined using random elements associated with both time steps $n$ and $n-1$.  
Our main statements are given in the discrete time setting in Subsection \ref{subsection:main_discrete} and in the continuous setting in Subsection \ref{subsection:main_continuous}.
There, we introduce Rider--Valk\'o processes, and we state the Matsumoto--Yor Markov property. 

Specializing our result to the continuous setting and for the cone of real symmetric matrices recovers \cite{rider2016matrix}. 
Specializing to the case of complex symmetric matrices, we also recover the matrix Dufresne identities in the physics literature \cite[Section 7]{grabsch2020wigner} and \cite{gautie2021matrix}. The latter references deal with the complex versions of the Lévy transform $\ell$ and associated $AX+B$ models, yet do not seem to treat the associated Matsumoto--Yor processes. Of course, we expect that the existing approaches \cite{rider2016matrix, arista2024Matsumoto} naturally adapt to the complex and quaternionic cases.
Specializing our result to the Lorentz cone or the exceptional octonion cone yields genuinely new results. The example of the Lorentz cone is specifically detailed in Subsection \ref{subsection:lorentz}.

Regarding the proof, we start by proving the result in the discrete setting, where distributional identities and the tool of intertwining are easier to manipulate. 
We believe that the discrete-time proofs presented in Section \ref{section:discrete_time} offer a particularly concise and conceptually optimal approach, even in the initial Matsumoto--Yor setting. Once the appropriate structures are identified, establishing intertwining in discrete-time reduces to analyzing a single Markov transition step and performing a change of variables. This conciseness reflects a deliberate effort to isolate the essential mechanisms prior to the Jordan algebra generalization.

Then, in Section \ref{section:continuous_time}, we deduce the continuous time result thanks to a functional central limit theorem on Lie groups. Here, the Lie group in question is the automorphism group of the Jordan algebra, which is nothing but matrix groups playing an analogous role to $GL_n(\R)$ in \cite{rider2016matrix}. A particularly interesting aspect is the interplay between Brownian motion on a Lie group $G$ and Brownian motion on the symmetric space $G/K$.

\section{Intuition and statement of the main theorems}
\label{intuition}

\subsection{Block setup for the discrete time Rider--Valk\'o's process} 
Our intuition stems from the block model in Eq. \eqref{blockmod}. Let us formally describe a block model in the discrete case. Let $(w_n; n\ge 0)$ be a sequence of i.i.d. random variables living in the set $S_r^+(\R)$. We consider a random process $G = (G_n \ ; \ n\ge 1)$ defined by
$$ G_{n+1}
   = \begin{pmatrix} W_{n+1} & 0 \\ Z_{n+1} & (W_{n+1}^*)^{-1} \end{pmatrix}
   = G_n \begin{pmatrix} w_{n+1} & 0 \\ \id_{GL_r(\R)} & w_{n+1}^{-1}\end{pmatrix}, 
   \ n\ge 0 ,$$
with $G_0 = \begin{pmatrix} \id_{GL_r(\R)} & 0 \\ 0 & \id_{GL_r(\R)} \end{pmatrix} $, i.e. 
$$ W_{n+1} = W_n w_{n+1} \, \textrm{ and }\, Z_{n+1}=Z_nw_{n+1}+(W_n^*)^{-1}, \, n\ge 0,$$ 
with $W_0 = \id_{GL_r(\R)}$ and $Z_0=0_{M_r(\R)}$. This model forces the appearance of two remarkable processes. On the one hand, we have an increasing process $I = ( I_n := Z_nW_n^{-1} \ ; \ n\ge 0)$ which satisfies
$$ I_n = Z_nW_n^{-1}
       = \sum_{k=0}^{n-1} w^{-1}_1\dots w^{-1}_kw^{-1}_{k+1}w^{-1}_k\dots w^{-1}_1, \quad n\ge 0$$ 
and on the other hand, we have an $AX+B$--type Markov chain $L = ( L_n := W_n^*Z_n  \ ; \ n\ge 0)$ which satisfies for $n \geq 0$
$$ L_{n+1} = W_{n+1}^*Z_{n+1}
           = w_{n+1}W_n^*Z_nw_{n+1} + w_{n+1}
           = w_{n+1} L_n w_{n+1} + w_{n+1} \ .$$


It is important to notice at this stage that the definitions of the two processes $I$ and $L$ make sense in the framework of any Euclidean Jordan algebras. Their definitions actually  involve  the so-called quadratic representation, which is written in this case
$\textrm P: S_r(\R) \rightarrow \mathrm{End}\left( S_r(\R) \right)$ and defined as
$$
    \textrm P(w)(s) := w s w, \ \ w,s \in S_r(\R) \ ,
$$
where $S_r(\R)$ is the set of symmetric $r\times r$ real matrices.
Using this quadratic representation, we can reformulate the previous equations as
$$ I_n = \sum_{k=0}^{n-1} \textrm P(w_1^{-1})\dots \textrm P(w_k^{-1})(w_{k+1}^{-1}) \ ,$$ 
and 
$$ L_{n+1} = \textrm P(w_{n+1})( L_n ) + w_{n+1} \ .$$
Both equations are well-defined within the broader framework of general Euclidean Jordan algebras. The above will in fact correspond to the particular case of the Jordan algebra $E=S_r(\R)$. Nevertheless the recursion formula $$Z_{n+1}=Z_nw_{n+1}+(W_n^*)^{-1}, \, n\ge 0,$$
does not, because there is in general no associative product in these algebras. The mathematicians familiar with Jordan algebras might recognize that a (matrix) square missing. As such, if we consider the more fundamental ``squared'' process $\Lambda = (\Lambda_n \ ; \ n\ge 0)$ defined by $\Lambda_n=Z_n^*Z_n$, $n\ge 0$, then $\Lambda$ satisfies 
$$ \Lambda_{n+1}=(w_{n+1}+L_n^{-1})\Lambda_n(w_{n+1}+L_n^{-1}) \ ,$$
and $\Lambda_n=L_n(W_n^*W_n)^{-1}L_n$, $n\ge 0$. 

These equations make sense in the framework of Euclidean Jordan algebras. These are the processes $L$ and $\Lambda$ that we study in this paper and their analogue in the framework of Euclidean Jordan algebras. We basically prove that $\Lambda$ is a Markov process and prove a Dufresne-type identity involving an inverse Wishart distribution on a symmetric cone. Close in spirit to \cite{chamayou1999additive}, this last distribution occurs as the stationary distribution of the Markov chain $L$.

 \subsection{Euclidean Jordan algebras, definitions} 
 \label{subsection:jordan_definitions}
 
 We first give the essential definitions to state our main theorems. Jordan algebras were introduced in 1933 by Pascual Jordan. More details about the historical and modern context are left as comments in Subsection \ref{subsection:history_jordan}. They provide a generalization of the space of real symmetric matrices. Each Euclidean Jordan algebra is associated with a symmetric cone. Analysis on specific families of symmetric cones arises in various contexts, such as multivariate statistics, mathematical physics or statistical mechanics. Harmonic analysis in these cones is particularly rich. Magically, a lot of explicit computations can be done which lead in particular to generalizations of special functions as gamma functions, Bessel functions or hypergeometric functions \cite{faraut1994analysis}. 
 
 A Euclidean Jordan algebra is a Euclidean space $(E, \langle \cdot ,\cdot\rangle)$ equipped with a bilinear mapping
$$\left\{
    \begin{array}{lll}
        E\times E & \to & E\\
        (x,y) & \mapsto &x\cdot y
    \end{array}
\right.$$ 
and a neutral element $e$ satisfying
\begin{enumerate}\label{axiom}
  \item $x\cdot y=y \cdot x$
  \item $x\cdot((x\cdot x)\cdot y)= (x\cdot x)\cdot(x\cdot y)$
  \item $x\cdot e=x$
  \item $\langle x,y\cdot z\rangle =\langle x\cdot y,z\rangle$
  \end{enumerate}  
  
If $E$ is the Cartesian product of two Euclidean Jordan algebras with positive dimension, $E$ is nonsimple. Otherwise, $E$ is said to be simple. We will see below a classification of Euclidean simple Jordan algebras. The basic and well-known example is the algebra $S_r(\R)$ of $r\times r$ symmetric real matrices equipped with the Jordan product defined for $a,b\in S_r(\R)$ by 
\begin{align}
    \label{eq:jordan_quasimultplication}
    a\cdot b = & \ \frac{1}{2}(ab+ba) \ .
\end{align}

To each Euclidean Jordan algebra $E$, we attach the set of Jordan squares 
$$ \bar E_+ = \left\{x\cdot x \ ; \ x \in E \right\}$$ 
and its interior $E_+$. Every element $x$ in $E_+$ is invertible in $E_+$. When $E=S_r(\R)$, $E_+$ is the familiar set of positive definite symmetric real matrices $S_R^+(\R)$. In general $E_+$ is a symmetric cone. To each Euclidean simple Jordan algebra $E$, we also attach the automorphism group $G(E_+)$ of $E_+$ which is defined by 
$$ G(E_+) = \left\{g\in GL(E) \ ; \ g(E_+) = E_+ \right\} \ . $$
 The quadratic representation $P: E\to \mbox{End}(E)$ of $E$ is defined by 
\begin{align}
\label{eq:def_P}
\mathrm P(x) = 2\textrm L(x)^2 - \textrm L(x\cdot x) \ , \quad x\in E \ ,
\end{align} 
where $\textrm L: E \rightarrow \mbox{End}(E)$ is the left action i.e. $\textrm L(x)(y) = x\cdot y$, for $x,y \in E$. 
In the case when $E=S_r(\R)$, $\textrm P(x)(y)=xyx$, for $x,y\in S_r(\R)$.

\medskip

\paragraph{ \bf Classification} Let us quickly give the reader an idea of the new setting that Jordan algebras provide. The details required for proofs are left to Section \ref{section:jordan_algebras}. Up to a linear isomorphism, there are only five kinds of Euclidean simple Jordan algebras as originally stated in \cite[Fundamental Theorem 2, p.64]{jordan1993algebraic_collected}, although we recommend \cite[Table p.97]{faraut1994analysis}. 

If $\mathbb K$ denotes either the set of real numbers $\R$, the set of complex ones $\C$, the set of quaternions $\mathbb H$ or the set of octonions $\mathbb{O} $, and $S_r(\mathbb K)$ the set of Hermitian matrices valued in $\mathbb K$, endowed with the scalar product defined by $\langle x,y\rangle = \Tr(xy^*)$, $x,y\in S_r(\mathbb K)$ and the Jordan product
$$x\cdot y= \frac{1}{2}(xy+yx), \, x,y\in  S_r(\mathbb K),$$  
where $xy$ is the ordinary product of matrices. Then $S_r(\R)$, $r\ge 1$, $S_r(\C)$, $S_r(\mathbb H)$, $r\ge 2$, and the exceptional $S_3(\mathbb O)$ gives the list of the four first kinds. 

The fifth kind is the Euclidean space $\R^n$ with $n \ge 3$ equipped with the Jordan product
\begin{align}
\label{JordanproductLorentz}
  x\cdot y = & \left( \sum_{i=0}^{n-1}x_iy_i, \ x_0y_1+y_0x_1, \ \dots, \ x_0y_{n-1}+y_0x_{n-1} \right) \ ,
\end{align}
for $x=(x_0,\dots,x_{n-1})$, $y=(y_0,\dots,y_{n-1})$.

\subsection{Main theorem and consequences in discrete setting} 
\label{subsection:main_discrete}
Let $E$ be a simple Jordan algebra, $E_+$ the interior of the set of Jordan squares, $G(E_+)$ the automorphism group and $G$ the connected component of $G(E_+)$ containing the identity. 
One considers a $G$ invariant measure $\mu$ on $E_+$ which will be defined explicitly below and two families of laws on $E_+$ which are particularly relevant to our setting. Up to a scalar multiple, there is a unique scalar product satisfying the fourth axiom of Jordan algebra. We choose one which is often considered as canonical which will be defined later. 
The matrix GIG distribution $\mathrm{GIG}_{E_+}\left( p ; a, b \right)$ for $a,b\in E_+$, $p\in \R$ is a distribution on $E_+$ defined by 
\begin{align}
\label{eq:def_GIG} 
   & \ \mathrm{GIG}_{E_+}\left( p ; a, b \right)(dx) \\
:= & \ \frac{\det(x)^{p}}{2K(p;a,b)}
\exp\left( -\frac{1}{2}(\langle a,x\rangle +\langle b,x^{-1}\rangle) \right)
\mu(dx) \ ,
\nonumber
\end{align}
 where $\det$, defined below, coincides with the usual determinant when $E_+=S^+_r(\R)$. The Wishart distribution $\gamma_{p,a}$ for $a\in E_+$, $p>\dim E/r-1$, where $r$ is the rank which will be defined below, is the distribution on $E_+$ defined by 
\begin{align}
\label{eq:def_Wishart}
\gamma_{p,a}(dx) 
   :=\frac{2^{-rp}}{\det(a)^p\Gamma_r(p)} \det(x)^{p}e^{-\frac{1}{2}\langle a^{-1},x\rangle}
    \, \mu(dx),
   \ 
\end{align}
where $\Gamma_r(p)$ is the multivariate gamma function. The inverse Wishart distribution $\mathrm{Inv}_{\#}(\gamma_{p,a})$ is the image measure of $\gamma_{p,a}$ under the inverse map $\mathrm{Inv}: x\mapsto x^{-1}$.

\begin{definition}
\label{maindefi} 
Let $(w_n)_{n\ge 1}$ be a sequence of i.i.d. random variables on $E_+$ distributed according to $\mathrm{GIG}_{E_+}\left( p ; e, e \right)$. 

Let on the one hand, $L := (L_n \ ; \ n\ge 0)$ be the Markov chain defined by 
$$L_{n+1}=\mathrm{P}(w_{n+1})(L_n)+w_{n+1}, \, n\ge 0,$$
and $L_0=0$. 

On the other hand, the random process $\Lambda := (\Lambda_n \ ; \ n\ge 1)$ is defined by
$$ \Lambda_{n+1}=\mathrm{P}(w_{n+1}+L_n^{-1})(\Lambda_{n}), \, n\ge 1, $$
and $\Lambda_1=e$.
\end{definition}

Notice that the two-component random process $\left( (\Lambda_n,L_n) \ ; \ n\ge 0 \right)$ is clearly a Markov chain. Analogously to the historical Pitman 2M-X Theorem \ref{thm:pitman} and then the Matsumoto--Yor Theorem \ref{thm:matsutomo_yor}, our main theorem asserts that the process $\Lambda$ enjoys a remarkable Markov property together with a certain interplay with (the Jordan algebra version of) the GIG distribution.

\begin{thm}[Main Theorem, discrete time version]
\ 

\begin{enumerate}
    \item The random process $\left( \Lambda_n \ ; \ n\ge 1 \right)$ is Markovian.
    \item For any $n\ge 2$, the conditional law of $L_n$ given $\left( \Lambda_k \ ; \ k\le n\right)$ is $\mathrm{GIG}_{E_+}\left( p ; a^{-1}, e \right)$, where $a=\Lambda_n$.
\end{enumerate}
\label{thm:main_discrete}
\end{thm}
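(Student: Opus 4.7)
The plan is to prove statement (2) by induction on $n$, and then deduce statement (1) by integrating out $L_n$ in the joint one-step transition.

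\textbf{Base case and inductive setup.} For $n=1$, $\Lambda_1 = e$ and $L_1 = w_1 \sim \mathrm{GIG}_{E_+}(p; e, e)$, matching $\mathrm{GIG}_{E_+}(p; e^{-1}, e)$ since $e^{-1} = e$. For the inductive step, assume the conditional law at rank $n$ and write $a = \Lambda_n$. Conditionally on $(\Lambda_1, \dots, \Lambda_n)$, the random variables $L_n$ and $w_{n+1}$ are independent, with joint density
$$
C\,\det(\ell)^p \det(w)^p \exp\left(-\half\bigl[\langle a^{-1}, \ell\rangle + \langle e, \ell^{-1}\rangle + \langle e, w\rangle + \langle e, w^{-1}\rangle\bigr]\right)
$$
with respect to $\mu(d\ell)\mu(dw)$. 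I would pass to new variables $(A, M) := (\Lambda_{n+1}, L_{n+1})$ via the auxiliary parameter $s := w + \ell^{-1} \in E_+$. Polarization for the quadratic representation, together with $P(\ell^{-1})(\ell) = \ell^{-1}$, yields the crucial relation $L_{n+1} = P(w)(\ell) + w = P(s)(\ell) - s$, so that inversion gives $\ell = P(s^{-1})(M) + s^{-1}$ and $a = P(s^{-1})(A)$.

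\textbf{Density rewriting.} Using the self-adjointness of $P(s)$, the determinantal identity $\det(P(s)(y)) = \det(s)^2 \det(y)$, the inversion rule $(P(s)(y))^{-1} = P(s^{-1})(y^{-1})$, and the $P(s)$-invariance of $\mu$, each pairing can be rewritten in the new variables; for instance,
$$
\langle a^{-1}, \ell\rangle = \langle a^{-1}, P(s^{-1})(M)\rangle + \langle a^{-1}, s^{-1}\rangle = \langle A^{-1}, M\rangle + \langle a^{-1}, s^{-1}\rangle,
$$
and analogous expressions hold for the other three pairings. Once all contributions and the Jacobian are assembled, the joint density in the $(A, M)$-variables should split as
$$
q(a, A)\,\mu(dA) \cdot \frac{\det(M)^{p}}{2K(p; A^{-1}, e)}\exp\left(-\half\bigl[\langle A^{-1}, M\rangle + \langle e, M^{-1}\rangle\bigr]\right)\mu(dM),
$$
for some density $q(a, \cdot)$. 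This delivers simultaneously (a) that the conditional law of $\Lambda_{n+1}$ given the past depends only on $a = \Lambda_n$, which propagates the Markov property of $\Lambda$ and identifies $q$ as its transition density; and (b) that, conditionally on $(\Lambda_k)_{k\le n+1}$, one has $L_{n+1} \sim \mathrm{GIG}_{E_+}(p; \Lambda_{n+1}^{-1}, e)$, closing the induction.

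\textbf{Main obstacle.} The bulk of the work is the algebraic bookkeeping of this change of variables in an abstract Euclidean Jordan algebra: verifying the dynamics identity $L_{n+1} = P(s)(L_n) - s$ from the polarization formula for $P$, computing carefully the Jacobian of $(\ell, w) \mapsto (A, M)$ against $\mu \otimes \mu$, and checking that the cross-coupling between $\ell$ and $\ell^{-1}$ in the joint GIG density is exchanged with the analogous coupling between $M$ and $M^{-1}$ after the change of variables. This last point is where the remarkable interplay between the $\mathrm{GIG}$ distribution and the quadratic representation manifests itself.
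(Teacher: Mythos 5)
Your proposal is correct and is essentially the paper's argument: your one-step change of variables $(\ell,w)\mapsto(\Lambda_{n+1},L_{n+1})$ via $s=w+\ell^{-1}$, with the identities $L_{n+1}=\Prm(s)(\ell)-s$ and $M^{-1}+s^{-1}=w^{-1}$, is exactly the content of the paper's Lemma \ref{cov} and Proposition \ref{first-step} (which rest on Propositions \ref{proposition:forlim} and \ref{magic-identity}). Your induction, which propagates the conditional GIG law and extracts the Markov kernel $q(a,\cdot)$ by integrating out $M$, is the unpacked form of the Rogers--Pitman intertwining $\mathcal K\circ\mathcal P=\mathcal Q\circ\mathcal K$ used in Theorem \ref{thm:intertwining}; the paper itself notes these two formulations are equivalent.
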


Following the historical point of view of Chris Rogers and Jim Pitman \cite{rogers1981markov}, the proof hinges on an intertwining relation between the Markov transition kernels of the latter Markov chain and of $\Lambda$.

\begin{definition}
\label{defiint} 
Let $I=(I_n \ ; \ n\ge 0)$ be the random process defined by 
$$ I_n=\sum_{k=0}^{n-1}\mathrm P(w_1^{-1})\dots \mathrm P(w_k^{-1})(w_{k+1}^{-1}) \ , \ n\ge 0 \ .
$$
\end{definition} 
The following theorem gives a discrete Dufresne-type identity. The key ingredient of the proof is to rely the limit of the series to the stationary distribution of the $AX+B$ Markov chain $L$, according to a principle for which we classically refer to Harry Kesten \cite{kesten1973random}, Wim Vervaat \cite{vervaat1979on}, Charles Goldie \cite{goldie1991implicit}, or Philippe Bougerol and Nico Picard \cite{bougerol1992strict}. One can also see Jean-Fran\c cois Chamayou and G\'erard Letac in a framework close to ours \cite{chamayou1999additive}, or \cite{buraczewski2016Stochastic} for a large overview of the subject.

\begin{thm}[Dufresne Identity for Jordan algebras, discrete version]
\label{thm:dufresne_discrete}
Suppose that $p>\dim E/r-1$. Then
$$ I_\infty := \lim_{n \rightarrow \infty} I_n
            = \sum_{k=0}^\infty \mathrm P(w^{-1}_1)\dots \mathrm P(w^{-1}_{k})(w^{-1}_{k+1})
$$ 
is almost surely finite and distributed according to an inverse Wishart distribution $\mathrm{Inv}_{\#}(\gamma_{p,e})$.
\end{thm}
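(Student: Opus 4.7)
The plan follows the classical strategy for perpetuities of Kesten, Vervaat, Goldie, Bougerol--Picard and, in a setting especially close to ours, Chamayou--Letac: although convergence of $I_n$ is not manifest from its forward expression, both almost sure convergence and the limiting law become tractable after a time reversal that turns the series into an AX+B Markov chain.

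\textbf{Step 1: monotonicity.} Each summand $P(w_1^{-1})\cdots P(w_k^{-1})(w_{k+1}^{-1})$ belongs to $E_+$, since $P(u)$ preserves $\overline{E_+}$ whenever $u\in \overline{E_+}$ and $w_{k+1}^{-1}\in E_+$. Hence $(I_n)_{n\ge 0}$ is non-decreasing in the cone order and converges almost surely in $\overline{E_+}\cup\{+\infty\}$ to a limit $I_\infty$.

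\textbf{Step 2: time reversal.} Invariance of $\mu$ under inversion, together with the symmetry $K(p;a,b)=K(-p;b,a)$ of the Herz Bessel-like function, implies that the inverse of a $\mathrm{GIG}_{E_+}(p;e,e)$-variable is $\mathrm{GIG}_{E_+}(-p;e,e)$. Setting $\tilde w_k := w_k^{-1}$ and relabeling $w_k \leftrightarrow w_{n+1-k}$ in $I_n$ (which preserves the joint law since the $w_k$ are i.i.d.) gives
\begin{equation*}
    I_n \overset{\Lc}{=} \tilde L_n := \sum_{j=1}^{n} P(\tilde w_n)\,P(\tilde w_{n-1})\cdots P(\tilde w_{j+1})(\tilde w_j),
\end{equation*}
which is exactly the AX+B chain $\tilde L_{n+1} = P(\tilde w_{n+1})(\tilde L_n) + \tilde w_{n+1}$, $\tilde L_0 = 0$, driven by the i.i.d.\ sequence $(\tilde w_n)_{n\ge 1}$.

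\textbf{Step 3: stationarity and identification.} Under $p > \dim E/r - 1$, the log-integrability of $P(\tilde w)$ and a standard negative top-Lyapunov-exponent argument yield that the chain $\tilde L$ is contracting and admits a unique invariant probability $\pi$ on $E_+$, with $\tilde L_n \Rightarrow \pi$. Combined with Step 1, this forces $I_\infty < \infty$ almost surely and $I_\infty \sim \pi$. To identify $\pi = \mathrm{Inv}_\#(\gamma_{p,e})$, we verify the fixed-point relation $M \overset{\Lc}{=} P(\tilde w)(M) + \tilde w$ for $M\sim\pi$ and $\tilde w$ independent of $M$. The Jordan identity $P(\tilde w)(\tilde w^{-1}) = \tilde w$ rewrites the right-hand side as $P(\tilde w)(M + \tilde w^{-1})$; taking inverses and setting $w = \tilde w^{-1} \sim \mathrm{GIG}_{E_+}(p;e,e)$ and $X = M^{-1}$, the equation reduces to the Wishart--GIG conjugation identity
\begin{equation*}
    X \overset{\Lc}{=} P(w)\bigl((X^{-1}+w)^{-1}\bigr), \qquad X \sim \gamma_{p,e},\ w \sim \mathrm{GIG}_{E_+}(p;e,e)\ \text{independent}.
\end{equation*}

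\textbf{Expected main obstacle.} Step 3's identification of $\pi$ is the analytic heart of the proof. In the scalar and symmetric-matrix cases it is verified by direct Laplace-transform computations with power functions on $E_+$, and that approach should carry over thanks to the well-developed harmonic analysis on symmetric cones recalled in Section \ref{section:jordan_algebras}. Alternatively, a Lukacs--Olkin--Rubin characterization of Wishart laws on $E_+$ along the lines of Casalis--Letac gives a self-contained verification, or one may leverage the conditional GIG law of Theorem \ref{thm:main_discrete} and bootstrap it to stationarity to obtain the fixed-point identity for free.
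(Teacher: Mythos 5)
Your setup is the right one and matches the paper's in substance: the series $I_n$ is the backward iterate of the AX+B chain $\tilde L_{n+1}=\mathrm P(w_{n+1}^{-1})(\tilde L_n)+w_{n+1}^{-1}$ (the paper writes this as $I_n(x)=F_1\circ\dots\circ F_n(x)$ versus $Y_n=F_n\circ\dots\circ F_1(x)$, which is your time reversal), and the limit law must be the stationary law of that chain. Your reduction of the fixed-point equation to $X\overset{\Lc}{=}\mathrm P(w)\bigl((X^{-1}+w)^{-1}\bigr)$ via $\mathrm P(\tilde w)(\tilde w^{-1})=\tilde w$ and $(\mathrm P(x)(y))^{-1}=\mathrm P(x^{-1})(y^{-1})$ is also correct. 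But the proof stops exactly where the theorem's content begins: you never verify that the inverse Wishart law satisfies this identity, and you explicitly defer it as the ``expected main obstacle.'' This is not a routine verification to be waved at; it is the analytic core of the result. The paper proves it (Proposition \ref{proposition:mes-inv}) by an explicit change of variables $(x,y)\mapsto(u,v)=(\mathrm P(y)(x)+y,\,y+x^{-1})$, whose Jacobian is computed in Lemma \ref{cov} to be $\vert\det\mathrm P(y+x^{-1})\vert$ using Propositions \ref{proposition:forlim} and \ref{magic-identity}; the joint density of $(U,V)$ then factors and the $u$-marginal is read off as $\mathrm{Inv}_{\#}(\gamma_{p,e})$. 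Without this computation (or a genuine substitute such as the Laplace-transform or Lukacs--Olkin--Rubin routes you mention, neither of which you carry out), the proof is incomplete.

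A second, smaller gap: your convergence and uniqueness argument rests on an asserted ``standard negative top-Lyapunov-exponent argument'' for the products $\mathrm P(\tilde w_n)\cdots\mathrm P(\tilde w_1)$. Negativity of the top exponent under the hypothesis $p>\dim E/r-1$ is not obvious and you do not check it; in the Kesten--Vervaat--Goldie--Bougerol--Picard circle of ideas the implication usually runs the other way (existence of a stationary solution implies contraction). The paper avoids Lyapunov exponents altogether: once the invariant measure is known, positivity of the summands gives $\Tr(I_n)\le\Tr(I_n(X))$ for an independent stationary start $X$, whence boundedness and a.s.\ convergence; and the limit is identified by extracting a subsequence along which $\mathrm P(w_1^{-1})\cdots\mathrm P(w_{\varphi(n)}^{-1})(x)\to 0$ a.s., so that $I_{\varphi(n)}$ and $I_{\varphi(n)}(X)\overset{\Lc}{=}X$ share the same limit. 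You would do well to replace the Lyapunov claim by this monotone-domination and subsequence argument, which requires nothing beyond the invariance you still owe.
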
 

\subsection{Continuous time theorems} 
\label{subsection:main_continuous}
 
In this Subsection, we give continuous time versions of the Main Theorem \ref{thm:main_discrete} and the discrete Dufresne identity given as Theorem \ref{thm:dufresne_discrete}. These are of course obtained by carefully renormalizing the previous results. Moreover, the choice of such a presentation is justified by the fact that it is easier to obtain the Matsumoto--Yor Markov property in the discrete context first.

More specifically, as $G$ is a connected Lie group, one needs to invoke Donsker-type theorems on Lie groups \cite{stroock1973limit}. The Lie algebra $\gfrak$ is a subalgebra of $\mathfrak{gl}(E)$ which contains $\id_{\mathfrak{gl}(E)} = \Lrm(e)$. It is equipped with an $\Ad(K)$-invariant inner product. One considers a left Brownian motion on $\left( g_t \ ; \ t\ge 0 \right)$ on $G$ with covariance $4 \id_{\mathfrak{g}^{\otimes 2}} $ and drift $2p \id_{\mathfrak{gl}(E)} = 2p \Lrm(e)$, that is to say
\begin{align}
    \label{eq:def_g_Jordan}
    dg_t & = g_t \circ \left( 
      \sum_{i=1}^{\dim G} 2 X_i dB^{(i)}_t
    + 2p \Lrm(e) dt
    \right) \ ,
\end{align}
where $\left( X_1, \dots, X_{\dim G} \right)$ is an orthonormal basis of $\gfrak \subset \mathfrak{gl}(E)$, and the $B^{(i)}$ are standard real Brownian motions. The canonical scalar product is defined later in Section \ref{section:jordan_algebras}.

One also defines the random processes $\iota = (\iota_t \ ; \ t\ge 0)$, $\ell = (\ell_t \ ; \ t\ge 0)$ and $\lambda = (\lambda_t \ ; \ t\ge 0)$ by 
\begin{align}
\label{threeproc}
\iota_t   = \int_0^t(g_s^*)^{-1}(e) ds \ , \quad
\ell_t    = g_t^*(\iota_t), \textrm{ and } \, 
\lambda_t = g_t^*(\iota_t\cdot \iota_t ), \quad t\ge 0 \ .
\end{align} 

\begin{thm}[Main Theorem, continuous time version]
\label{thm:main_continuous} \ 

The random process $\lambda$ is a Markov process, whose natural filtration is strictly contained in that of $g$, i.e. for all $t > 0$, $\Fc_t^\lambda \varsubsetneq \Fc_t^g$.

Furthermore, the conditional law of $\ell_t$ given $\left( \lambda_s \ ; \ s \le t \right)$ is the distribution $\mathrm{GIG}_{E_+}\left( p ; a^{-1}, e \right)$, where $a=\lambda_t$ i.e.
\begin{align*}
     \ \P\left( \ell_t \in dx \ | \ \Fc_t^\lambda, \ \lambda_t = \lambda \right)
 = & \ \mathrm{GIG}_{E_+}\left( p ; \lambda^{-1}, e \right)(dx) \ .
\end{align*}
\end{thm}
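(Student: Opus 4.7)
Following the strategy announced in the introduction, I would transfer Theorem~\ref{thm:main_discrete} from the discrete to the continuous setting via a functional central limit theorem on the Lie group $G$ of Stroock--Varadhan type \cite{stroock1973limit}. The three main steps are: (i) construct, for each $N \ge 1$, a scaled version of the discrete chain of Definition~\ref{maindefi} whose driving variables $w_k^{(N)}$ concentrate around $e$ with fluctuations of order $N^{-1/2}$, tuned so that the local mean and covariance produce the drift $2p\,\Lrm(e)$ and covariance $4\,\id_{\mathfrak{g}^{\otimes 2}}$ of $g$; (ii) establish joint convergence of the appropriately rescaled triple $(W^{(N)}, L^{(N)}, \Lambda^{(N)})$ to $(g, \ell, \lambda)$; (iii) pass the Markov property and the conditional $\mathrm{GIG}$ identity of Theorem~\ref{thm:main_discrete} through the weak limit.

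For step (ii), Taylor-expanding $\mathrm{P}(e+\varepsilon X) = \id_{\mathfrak{gl}(E)} + 2\varepsilon\,\Lrm(X) + O(\varepsilon^{2})$ in the AX+B recursion $L_{n+1} = \mathrm{P}(w_{n+1})(L_n) + w_{n+1}$ identifies $N^{-1} L^{(N)}_{\lfloor Nt \rfloor}$ as a consistent discretisation of the SDE driven by $g$: the additive $w_{n+1}/N \approx e/N$ contributes exactly the drift $e\,dt$ obtained by differentiating $\iota_t$ in the definition of $\ell_t = g_t^{*}(\iota_t)$. Combined with Stroock's theorem, which gives $W^{(N)}_{\lfloor N \cdot \rfloor} \Rightarrow g$ uniformly on compacts, and with the continuous mapping theorem applied to the continuous path functionals $\ell_t$ and $\lambda_t = g_t^{*}(\iota_t \cdot \iota_t)$, one obtains the desired joint convergence in Skorokhod topology.

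Granted this joint convergence, the Markov property of $\lambda$ drops out of Theorem~\ref{thm:main_discrete}(1) by passing bounded continuous test functions through the limit in the identity $\mathbb{E}[f(\tilde\Lambda^{(N)}_t) \mid \mathcal{F}^{\tilde\Lambda^{(N)}}_s] = (P^{(N)}_{s,t} f)(\tilde\Lambda^{(N)}_s)$. The conditional $\mathrm{GIG}$ identity then follows from the $N$-independent distributional equality $\mathbb{P}(L^{(N)}_n \in dx \mid \Lambda^{(N)}_n = a) = \mathrm{GIG}_{E_+}(p; a^{-1}, e)(dx)$ of Theorem~\ref{thm:main_discrete}(2). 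Finally, strict inclusion $\Fc_t^\lambda \subsetneq \Fc_t^g$ is automatic: if the two filtrations agreed, $\ell_t$ would be $\Fc_t^\lambda$-measurable, contradicting the non-degeneracy of the $\mathrm{GIG}$ conditional law.

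The step I expect to be the main obstacle is the rigorous transfer of the conditional distributions through the weak limit, since weak convergence of joint laws does not in general preserve regular conditional kernels. To handle this I would either establish a uniform tightness and integrability bound on the conditional kernels of $L^{(N)}$ given $\Lambda^{(N)}$, or, more directly, test against product functions $(a, x) \mapsto \varphi(a) \psi(x)$ and exploit the continuity of the map $a \mapsto \mathrm{GIG}_{E_+}(p; a^{-1}, e)$ in total variation on compacts of $E_+$, so that the $N$-independent discrete identity immediately implies its continuous counterpart. The same scheme should simultaneously yield the continuous Dufresne identity, identifying $\iota_\infty := \lim_{t \to \infty} \iota_t$ as an inverse Wishart variable, from Theorem~\ref{thm:dufresne_discrete}.
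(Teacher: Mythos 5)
Your overall strategy --- rescaled discrete chains, a Stroock--Varadhan invariance principle, and transfer of the conditional law through the limit using the fact that the one-step conditional kernel is $N$-independent --- is exactly the paper's strategy, and your identification of the kernel-transfer step as the delicate point (resolved by the $N$-independence of the $\mathrm{GIG}$ conditional law, cf.\ Proposition \ref{first-step-small-step}) is on target. However, there is one genuine gap in step (i). The increments of the random walk are $\Prm(w_k^{(N)})$, which live in $P=\exp_G(\pfrak)$, a submanifold of dimension $\dim\pfrak=\dim E<\dim G$. Consequently the local covariance of $\log_G \Prm(w^{(N)})=2\Lrm(\log_E w^{(N)})$ is supported on $\pfrak^{\otimes 2}$ only, and no tuning of the law of $w^{(N)}$ can make it equal to $4\,\id_{\mathfrak{g}^{\otimes 2}}$: the scaling limit is necessarily the Malliavin--Malliavin hypoelliptic Brownian motion $\widetilde g$ of Eq.\ \eqref{eq:def_BM_G_hypo}, not the elliptic Brownian motion $g$ of Eq.\ \eqref{eq:def_g_Jordan} appearing in the statement. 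Your scheme therefore proves the theorem only for the processes $\widetilde\iota,\widetilde\ell,\widetilde\lambda$ built from $\widetilde g$.

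The paper closes this gap with a separate argument (Subsection \ref{subsection:proofs_theo_cont_2}): one writes $g\overset{\Lc}{=}\widetilde g\,k$ where $k$ is an independent left Brownian motion on $K$ with time sped up by a factor $4$ (Lemma \ref{lemma:factorization_jordan}, using $\Ad(K)$-invariance and centrality of $\Lrm(e)$), and then checks that the statement is insensitive to this extra factor because $K$ fixes $e$ (so $(g_t^*)^{-1}(e)=(\widetilde g_t^*)^{-1}(e)$ and $\iota=\widetilde\iota$) and commutes with Jordan inversion (so the conditional $\mathrm{GIG}$ law is mapped to the $\mathrm{GIG}$ law with transformed parameter, Lemma \ref{lemma:cond_prod}). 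You need this bridge, or something equivalent, to obtain the theorem as stated. A secondary, smaller point: you should verify that the $N$-independent conditional identity survives the modified recursion $L^{(N)}_{k+1}=\Prm(w^{(N)}_{k+1})(L^{(N)}_k)+\tfrac1N w^{(N)}_{k+1}$ with $w^{(N)}\sim\mathrm{GIG}_{E_+}(p;Ne,Ne)$; this is a rescaling of Proposition \ref{first-step} but is not automatic from the unscaled Theorem \ref{thm:main_discrete}(2) as you invoke it.
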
 
To get the theorem, we exploit as in \cite{matsumoto2000analogue} an intertwining relation between $(\lambda,\ell)$ and $\lambda$ which is carried out from the discrete analogous processes. Notice that the conditional laws involved in Theorem \ref{thm:main_continuous} are the same as the conditional laws involved in their discrete time counterpart.
This is a key ingredient in the proofs of these theorems, and it hints to the integrability of the constructions at hand. In the setup of Matsumoto--Yor and its Lie-theoretic generalizations in \cite{chhaibi2013littelmann}, we are dealing with the harmonic analysis of Whittaker functions and the related geometric crystals. This aspect was further analyzed in the recent paper \cite{herent2024discrete}.

Similarly, to obtain the continuous time version of the theorems we will defined approximations of the diffusion processes involved in these theorems that all satisfy the same conditional property. Proposition \ref{first-step}, which establishes a remarkable property of the GIG distributions, allows to construct such approximations.

\begin{thm}[Dufresne Identity for Jordan algebras, continuous version]
\label{thm:dufresne_continuous}
Suppose that $p>\dim E/r-1$. Then the perpetuity integral 
 $$ \iota_\infty 
  := \lim_{t \rightarrow \infty} \iota_t
  = \int_0^\infty (g_s^*)^{-1}(e) ds $$ 
is almost surely finite and distributed according to an inverse Wishart distribution $\mathrm{Inv}_{\#}( \gamma_{p,e} )$.
\end{thm}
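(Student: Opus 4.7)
The strategy is to deduce the continuous-time Dufresne identity from its discrete counterpart, Theorem~\ref{thm:dufresne_discrete}, by a functional central limit theorem on the Lie group $G$, in keeping with the overall approach advertised for the continuous-time results and the reference to \cite{stroock1973limit}.

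First, I would construct a family of discrete approximations of the diffusion $(g_t)$ of \eqref{eq:def_g_Jordan}. For each mesh $h > 0$, let $(w_n^{(h)})_{n \geq 1}$ be i.i.d.\ on $E_+$ with a GIG law $\mathrm{GIG}_{E_+}(p; a_h, b_h)$, where the parameters $a_h, b_h \in E_+$ are tuned so that the block random walk built out of $w_n^{(h)}$, sampled at times $nh$, converges in distribution to the block Brownian motion encoding $(g_t, \iota_t)$. Concretely, one takes $a_h, b_h$ scaling diffusively (so that the logarithmic fluctuations of $w_n^{(h)}$ have covariance of order $h$) while keeping the shape parameter $p$ fixed, so as to match the drift $2p\,\Lrm(e)$ of \eqref{eq:def_g_Jordan}.

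Second, for each $h>0$, Theorem~\ref{thm:dufresne_discrete} --- or rather the direct rescaling of its proof, which only uses the stationary law of the $AX+B$ Markov chain $L^{(h)}$ --- yields that the discrete perpetuity
\[
   I^{(h)}_\infty
   = \sum_{k=0}^\infty \Prm\bigl((w_1^{(h)})^{-1}\bigr) \cdots \Prm\bigl((w_k^{(h)})^{-1}\bigr)\bigl((w_{k+1}^{(h)})^{-1}\bigr)
\]
is almost surely finite whenever $p > \dim E/r - 1$, and is distributed as $\mathrm{Inv}_{\#}(\gamma_{p,e})$. Crucially, this law is independent of the mesh $h$.

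Third, I would prove that $I^{(h)}_\infty \to \iota_\infty$ in distribution as $h \to 0$. This splits into two pieces: (i) convergence on any compact horizon, i.e.\ the truncation $\sum_{k \leq T/h}$ converges in law to $\iota_T = \int_0^T (g_s^*)^{-1}(e)\,ds$, which follows from Stroock's functional CLT on $G$ and continuity of the Riemann-integration functional; and (ii) uniform tail control, i.e.\ the mass beyond time $T$ tends to zero in probability as $T \to \infty$, uniformly in $h$. The main obstacle is (ii): under the hypothesis $p > \dim E/r - 1$, the drift $2p\,\Lrm(e)$ pushes $g_t$ deep into the cone so that $(g_s^*)^{-1}(e)$ decays exponentially, and at the discrete level $\Prm(w_n^{(h)})$ is on average a strict contraction whose rate must be bounded below uniformly in $h$. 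Establishing this uniform contractivity via moment estimates on $w_n^{(h)}$, in the spirit of Chamayou--Letac \cite{chamayou1999additive} and the general theory of stochastic recursions \cite{buraczewski2016Stochastic}, is the technical heart of the argument. Once the tails are tight, combining the fixed distribution $\mathrm{Inv}_{\#}(\gamma_{p,e})$ at every level $h$ with the weak convergence $I^{(h)}_\infty \to \iota_\infty$ identifies the law of $\iota_\infty$ as $\mathrm{Inv}_{\#}(\gamma_{p,e})$ and completes the proof.
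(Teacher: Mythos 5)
Your plan correctly sets up the discrete approximation with rescaled GIG increments (this is essentially what the paper does in Definition \ref{maindefi-n} and Proposition \ref{proposition:conv}), but the decisive step --- your item (ii), the uniform-in-$h$ tail control needed to interchange the limits $h\to 0$ and $k\to\infty$ in the perpetuity --- is only announced as ``the technical heart'' and never carried out. As written, the argument is incomplete precisely where it is hardest: one would need a contraction estimate on $\Prm((w^{(h)})^{-1})$ whose rate stays bounded below uniformly as the increments degenerate to the identity, and neither Theorem \ref{thm:dufresne_discrete} nor the Chamayou--Letac/Kesten--Vervaat machinery supplies such uniformity in the mesh.

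The paper avoids this interchange altogether by splitting the two assertions. Almost sure finiteness of $\iota_\infty$ is proved directly in continuous time (Proposition \ref{convlem}) from the asymptotic $\widetilde g_t = \exp(tH_\rho + o(t))$ in $K\backslash G/K$ --- the Weyl-vector drift of Brownian motion on a symmetric space --- which shows that $(\widetilde g_s^*)^{-1}(e)$ decays exponentially exactly when $p > \dim E/r - 1$. The law is then identified not through discrete approximations of the perpetuity but through the stationary measure of the continuous-time Markov process $\ell$: the exact discrete invariant measure (Proposition \ref{proposition:mes-inv-small-step}) passes to the limit at each \emph{fixed} time by Proposition \ref{proposition:conv}, giving that the inverse Wishart is invariant for $\ell$, and the time-reversal identity $\ell_t \overset{\Lc}{=} \int_0^t \widetilde g_s(e)\,ds + \widetilde g_t(\ell_0)$ converts this into the law of the perpetuity; only fixed-time weak convergence is ever used. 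Two further points you gloss over: the scaling limit of the random walk $\Prm(w_1^n)\cdots\Prm(w_k^n)$ is the hypoelliptic Brownian motion $\widetilde g$ driven by noise in $\pfrak$, not the Brownian motion $g$ of the theorem (the transfer is harmless here only because $K$ fixes $e$, so $(g_s^*)^{-1}(e) = (\widetilde g_s^*)^{-1}(e)$); and your claim that the mesh-$h$ perpetuity has law $\mathrm{Inv}_\#(\gamma_{p,e})$ exactly, independently of $h$, requires redoing the change-of-variables computation of Proposition \ref{proposition:mes-inv} with the rescaled GIG parameters --- it does not follow verbatim from Theorem \ref{thm:dufresne_discrete}.
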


\section{Further comments and examples}

\subsection{Considerations on Brownian motion on Lie groups}
Of course, the diffusive limit of random walks always makes sense and gives natural candidates for Brownian motion on $G$. But a few remarks are in order. Here, we argue that, while Brownian motion on the maximal compact subgroup $K$ and the symmetric space $G/K$ are natural, {\it the} Brownian motion on a non-compact Lie group $G$ is {\it not} as natural from the point view of geometry and harmonic analysis.

To being with, what do we usually mean by {\it the} Brownian motion on $G$? Following for example \cite{Taylor1991brownian}, one picks an infinitesimal generator which is a half sum of squares, $\half \Delta_G = \half \sum_{i=1}^{\dim G} X_i^2$, where the $X_i$'s are orthogonal w.r.t. the positive--definite quadratic form $\kappa_\theta$ defined from the Killing form $\kappa$ and the Cartan involution $\theta$. More explicitly and up to normalization, it is given for $(X,Y) \in \gfrak \times \gfrak$ by $\kappa_\theta(X,Y) = -\kappa(X, \theta Y)$. In the case of  $G = SL_{r}(\R)$, $\gfrak = \mathfrak{sl}_{r}(\R)$ is the space of zero--trace matrices, $\kappa(X,Y) = 2n \tr(X Y)$ and $\theta X = -X^*$ so that $\kappa_\theta(X,Y) = 2n \tr(X Y^*)$ is indeed the usual positive--definite scalar product on matrices. Using the Stratonovich convention, the stochastic differential equation defining the (left-invariant) Brownian motion on $G$ is thus
\begin{align}
\label{eq:def_BM_G}
    dg_t = & \ g_t \circ \left( \sum_{i=1}^{\dim G} X_i dB_t^{(i)} \right) \ ,
\end{align}
where the $B^{(i)}$'s are independent standard real Brownian motion.

Here it is well known that the Killing form $\kappa$ is, up to scaling, the unique invariant form. However it is not positive nor negative definite on $\gfrak$, making the twisting by $\theta$ absolutely necessary. In the decomposition $\gfrak = \pfrak \oplus \kfrak$, $\kappa$ is negative definite on $\kfrak$ and positive definite on $\pfrak$.
They are respectively the tangent spaces to the compact form $K$ and the symmetric space $G/K$, for which no twisting is necessary.

\medskip

{\bf On the compact form $K$:} On the compact group, the Laplacian is nothing but the sum of squares operator, along vectors orthonormal w.r.t. the (untwisted) $-\kappa$. In the absence of twisting, it truly coincides with the Casimir element, which plays a crucial role in the representation theory, and generates a natural Brownian motion. By picking the orthonormal basis of $X_i$'s as a basis adapted to the decomposition $\gfrak = \pfrak \oplus \kfrak$, the Brownian motion on $K$ takes a similar form to Eq. \eqref{eq:def_BM_G}, that is to say
\begin{align}
\label{eq:def_BM_K}
    dk_t = & \ k_t \circ \left( \sum_{i=1+\dim P}^{\dim K + \dim P} X_i dB_t^{(i)} \right) \ .
\end{align}

\medskip

{\bf On the symmetric space $G/K$:} Thanks to the polar decomposition, $P \approx G/K$ and a natural Brownian motion can be defined on $G/K$. Nevertheless notice that $P$ is not a group and equivalently $\pfrak$ is not a Lie algebra due to the classical relation $[\pfrak, \pfrak] = \kfrak$. In fact, since the tangent space to $G/K$ is naturally identified by translation to $\left( \gfrak \mod \kfrak \right)$, one can choose any supplementary space $\mathfrak{s}$ to $\kfrak$. In order to construct {\it any} diffusion, including the natural Brownian motion, one can consider infinitesimal increments supported on $\mathfrak{s}$ or equivalently a second order differential operator on $\mathfrak{s}$. This yields a subelliptic process on $G$ whose law in $G/K$ is the desired diffusion. Let us give two choices of $\mathfrak{s}$, which yield different descriptions of the Brownian motion on $G/K$. 

The direct decomposition provided by the Iwasawa decomposition $\gfrak = \nfrak \oplus \afrak \oplus \kfrak$ is particular as it yields $\mathfrak{s} = \nfrak \oplus \afrak$, which is a Lie algebra. Infinitesimal increments in $\mathfrak{s} = \nfrak \oplus \afrak$ yield a process which never leaves the group $NA$. This yields a realization of the Brownian motion on $G/K$ by an invariant process in $NA$ -- clearly not hypoelliptic in $G$. In fact, this process has a generator which is a sum of squares plus a drift term. This realization of the natural Brownian motion on $G/K$ as an invariant process on the solvable group $NA$ provides many interesting developments, see for example \cite[Deuxième Partie, Définition]{bougerol1983exemples} and \cite{bougerol2002paths}.

Using increments in $\mathfrak{s} = \pfrak$, one obtains a process which does not remain in $P$, and is hypoelliptic in $G$ -- again thanks to the relation $[\pfrak, \pfrak] = \kfrak$. This choice is rather exotic and, to the best of our knowledge, has only appeared in the paper of Marie-Paule and Paul Malliavin \cite{malliavin74}. For reasons specific to Jordan algebras, we shall make the same choice. The associated left-invariant hypoelliptic Brownian motion has a stochastic differential equation of the form
\begin{align}
\label{eq:def_BM_G_hypo}
    d\widetilde{g}_t = & \ \widetilde{g}_t \circ \left( \sum_{i=1}^{\dim P} X_i dB_t^{(i)} \right) \ .
\end{align}

\medskip

In the end, the previous discussion shows that in $G/K$, the natural Brownian motion can appear in multiple forms. In this paper, due to the importance of the quadratic representation in Jordan algebras, which lives in $P$, we make the choice of using the Malliavin--Malliavin \cite{malliavin74} hypoelliptic Brownian motion with noise in $\pfrak$. In the spirit of the previous discussion, we now state and prove an identity in law relating {\it the} Brownian motion $\left( g_t \ ; \ t \geq 0 \right)$ on $G$ which appears in our Main Theorem \ref{thm:main_continuous}, and the Malliavin--Malliavin hypoelliptic Brownian motion $\left( \widetilde{g}_t \ ; \ t \geq 0 \right)$ which will appear in our proofs.

\begin{lemma}
\label{lemma:factorization_gk}
Let $k = (k_t \ ; \ t  \geq 0)$ be a left-invariant Brownian motion on $K$ independent of $\widetilde{g}$.
Then the process $\widetilde{g} k = (\widetilde{g}_t k_t \ ; \ t \geq 0)$ has the same distribution as the Brownian motion $g = \left( g_t \ ; \ t \geq 0 \right)$ on $G$. 
\end{lemma}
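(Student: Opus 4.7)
The plan is to apply the Stratonovich product rule to $\widetilde{g}_t k_t$, absorb the resulting $\Ad$-rotation of the $\pfrak$-valued noise into a redefinition of the driving Brownian motion via Lévy's characterization, and read off that $\widetilde{g}k$ satisfies the defining SDE \eqref{eq:def_BM_G} of the Brownian motion $g$ on $G$.

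First, applying the Stratonovich product rule and substituting the SDEs \eqref{eq:def_BM_G_hypo} and \eqref{eq:def_BM_K}, I would obtain
\begin{align*}
d(\widetilde{g}_t k_t) = \widetilde{g}_t \circ \Bigl( \sum_{i=1}^{\dim P} X_i dB_t^{(i)} \Bigr) k_t + \widetilde{g}_t k_t \circ \Bigl( \sum_{j=\dim P+1}^{\dim G} X_j dB_t^{(j)} \Bigr).
\end{align*}
Rewriting the first summand via the identity $Xk = k\cdot \Ad(k^{-1})(X)$ so as to factor $\widetilde{g}_t k_t$ on the left, I would rewrite this as
\begin{align*}
d(\widetilde{g}_t k_t) = \widetilde{g}_t k_t \circ \Bigl( \sum_{i=1}^{\dim P} \Ad(k_t^{-1})(X_i)\, dB_t^{(i)} + \sum_{j=\dim P+1}^{\dim G} X_j\, dB_t^{(j)} \Bigr).
\end{align*}

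Second, I would invoke the Cartan-decomposition relation $[\kfrak,\pfrak]\subset \pfrak$ together with the $\Ad(K)$-invariance of the inner product on $\gfrak$: together, these imply that $\Ad(k_t^{-1})$ restricts to an orthogonal transformation of $\pfrak$. Writing $\Ad(k_t^{-1})(X_i) = \sum_{j=1}^{\dim P} M_{ij}(t)\, X_j$ with $M(t)$ an $O(\dim P)$-valued process adapted to the filtration of $k$, I would define the rotated processes
\begin{align*}
\widetilde{B}^{(j)}_t := \sum_{i=1}^{\dim P} \int_0^t M_{ij}(s)\, dB_s^{(i)}, \quad j=1,\ldots,\dim P,
\end{align*}
and apply Lévy's characterization to conclude that $(\widetilde{B}^{(1)},\ldots,\widetilde{B}^{(\dim P)},B^{(\dim P+1)},\ldots,B^{(\dim G)})$ is a standard $(\dim G)$-dimensional Brownian motion: the covariation $d\langle \widetilde{B}^{(j)},\widetilde{B}^{(j')}\rangle_t = \delta_{jj'}\, dt$ follows from the pointwise orthogonality of $M(t)$, while the cross-brackets with the $B^{(j)}$'s for $j>\dim P$ vanish by the independence assumption between $k$ and the noise driving $\widetilde g$. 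Substituting back, the SDE for $\widetilde{g}_t k_t$ becomes exactly \eqref{eq:def_BM_G}, and weak uniqueness for this SDE yields the stated identity in law.

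The step I expect to be the main obstacle is the careful justification of Lévy's characterization: one must check that $M(t)$ lies in $O(\dim P)$ pointwise, which uses the $\Ad(K)$-invariance of the inner product and the $\Ad(K)$-stability of $\pfrak$ in a crucial way, and that the cross-brackets vanish, which requires the independence assumption. Once these two ingredients are in place, everything else is formal Stratonovich calculus together with the decomposition of the orthonormal basis $(X_i)$ adapted to $\gfrak = \pfrak \oplus \kfrak$.
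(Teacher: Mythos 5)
Your proposal is correct and follows essentially the same route as the paper's proof: Stratonovich product rule, rewriting the $\pfrak$-noise as $\widetilde{g}_tk_t\circ\Ad(k_t^{-1})(\cdot)$, and then Lévy's characterization combined with the $\Ad(K)$-invariance of the metric to identify the rotated noise in law. Your version merely makes explicit the orthogonal matrix process $M(t)$, the stability of $\pfrak$ under $\Ad(K)$, and the vanishing of the cross-brackets via independence — details the paper leaves implicit.
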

\begin{proof} We start by invoking the Leibniz rule for semi-martingales (or Itô calculus) and the definition of $\widetilde g$ in Eq. \eqref{eq:def_BM_G_hypo}. Here, we use in the Stratonovich convention.
 \begin{align*}
     d( \widetilde{g}_t k_t )
     & = ( d\widetilde{g}_t)  \circ k_t  + 
     \left( \widetilde{g}_t k_t \right) \circ (k_t^{-1} dk_t) \\
     & = \widetilde{g}_t \circ ( \sum_{i=1}^{\dim P} X_i dB_t^{(i)}  ) \circ k_t + 
     \left( \widetilde{g}_t k_t \right) \circ (k_t^{-1} dk_t)\\
     & = \widetilde{g}_t k_t \circ \mathrm{Ad}_{k_t^{-1}}( \sum_{i=1}^{\dim P} X_i dB_t^{(i)} ) + 
     \left( \widetilde{g}_t k_t \right) \circ (k_t^{-1} dk_t)\\
     & = ( \widetilde{g}_t k_t ) \circ \left[ \mathrm{Ad}_{k_t^{-1}}( \sum_{i=1}^{\dim P} X_i dB_t^{(i)} ) + 
     k_t^{-1} dk_t \right]\ .
 \end{align*}
 Now, because of Lévy's characterization of Brownian motion, and the $\Ad(K)$-invariance of the covariance structure, the increments $$\Ad_{k_t^{-1}}( \sum_{i=1}^{\dim P} X_i dB_t^{(i)} )$$ have the same distribution as $\sum_{i=1}^{\dim P} X_i dB_t^{(i)}$. Thus we find the same stochastic differential equation as Eq. \eqref{eq:def_BM_G}. In law, $\tilde g k$ is indeed {\it the} left-invariant Brownian motion on $G$.
\end{proof}

\subsection{Historical comments on Jordan algebras}
\label{subsection:history_jordan}
In his seminal work \cite{jordan1933}, Pascual Jordan sets the grounds for what is now referred to as Jordan algebras. The starting point is following simple considerations from quantum mechanics. Only for the purpose of this Subsection, consider $\Ac$ an algebra of quantum observables. Following the principles of quantum mechanics, only self-adjoint operators correspond to measurements according to the probabilistic Born rule. Given two operators $A, B \in \Ac$, the operator $A+B$ corresponds to natural measurement. However the product $A \times B$ does not -- it is not even necessarily self-adjoint. By defining {\it quasimultiplication} as the bilinear operation $A \cdot B = \half \left( AB + BA \right)$, which maps self-adjoint operators to self-adjoint operators, Jordan realizes that multiplication tables of angular momenta and certain equations from quantum mechanics have simpler forms. This operation being exactly Eq. \eqref{eq:jordan_quasimultplication}, and being non-associative, the genesis of Jordan algebras becomes clear.

In the next paper \cite{jordan1993algebraic}, in order to fully understand what the new formalism brings, Jordan joins John von Neumann and Eugene Wigner in order to algebraically classify Jordan algebras, leading to the list given in Subsection \ref{subsection:jordan_definitions}. Their goal was to get an improved quantum theory on the basis of algebras with non-associative multiplication. According to some physicists, including Paul Dirac \cite{dirac1940}, this ``attempt was not sucessful'' for rather aesthetic reasons. In our opinion, perhaps the most disappointing aspect is that the classification does not yield completely new objects. Nevertheless, this was the starting point of multiple constructions of non-associative algebras \cite{raffin1951anneaux}. Finally let us mention the link between the exotic octonion Jordan algebra and the standard model in physics according to the more recent \cite{todorov2018octonions, boyle2020}.

In any case, our motivation remains squarely grounded in Pitman-type theorems, and here, the Matsumoto--Yor property.

\subsection{The example of the Lorentz cone}
\label{subsection:lorentz}
Here we give more details about Theorems \ref{thm:main_continuous} and \ref{thm:dufresne_continuous} when specializing to the Jordan algebra $E$ of the fifth kind. Let $n \ge 3$. Let us begin to describe the various objects attached to the Jordan algebra $E=\R^n$, with a Jordan product defined by Eq. \eqref{JordanproductLorentz} and equipped with the usual inner product. The complete description is given in \cite[I.2. Examples]{faraut1994analysis}. In this case, $E_+$ is the Lorentz cone 
$$E_+= \{x\in E \ ; \ x_0>0, \, (x,x) >0\},$$
where $(\cdot,\cdot)$ is the Lorentz quadratic form given for $x,y\in \R^n$ by
$$ (x,y) := x_0y_0-x_1y_1-\dots -x_{n-1}y_{n-1} \ .$$
Let $SO_0(1,n-1)$ be the identity component of the subgroup of elements of $GL(\R^n)$ preserving the Lorentz quadratic form $(\cdot,\cdot)$. We have that $G = \R_+^* \times SO_0(1,n-1)$, i.e. $G$ is the direct product of the group of positive dilations with $SO_0(1,n-1)$. Notice that dilations are central. The subgroup $K$ defined by 
$$ K = \left\{
\begin{pmatrix}
  1  & 0 \\
    0 & u
\end{pmatrix} \ ; \ u\in SO(n-1) \right\} $$
is a maximal compact subgroup of $G$. 
If $(e_0,\dots,e_{n-1}) $ is the canonical basis of $\R^n$ then $e=e_0$ and 
$K=\{g\in G \ ; \ g(e)=e\}$.  The Lorentz cone is identified to $G/K$ via 
$$ g\in G/K\mapsto g(e)\in E_+ \ .$$
The hyperbolic space $\H^{n-1}$ of dimension $n-1$ defined by 
$$ \H^{n-1} = \left\{ x\in \R^n \ ; \ (x,x)=1, \ x_0 > 0 \right\} $$
is identified to $SO_0(1,n-1)/K$ via 
$$ g\in SO_0(1,n-1)/K\mapsto g(e)\in \H^{n-1} \ .$$
Each $x\in \H^{n-1}$ can be written uniquely as 
$$x=(\cosh r)e_0+(\sinh r)\varphi,$$
with $(r,\varphi)\in \R_+\times \S^{n-2}$ where $\S^{n-2} = \left\{\sum_{k=1}^{n-1}\varphi_k e_k\ ;\ \sum_{k=1}^{n-1} \varphi_k^2 = 1 \right\}$. The coordinates $(r,\varphi)$ are called the polar coordinates of $x$. The Laplace--Beltrami operator on $\H^{n-1}$ is given in these coordinates by 
$$ \half \Delta_{\H^{n-1}} = 
   \half \frac{\partial^2}{\partial r^2} 
   + \frac{n-2}{2}\coth r\frac{\partial}{\partial r} + 
   \frac{1}{\sinh^2 r} \left( \half \Delta_{\S^{n-2}}^\varphi \right) \ , 
$$ 
where $\Delta_{\S^{n-2}}^\varphi$ is the Laplace operator on the sphere $\S^{n-2}$ \cite[Theorem III.5.3]{franchi2012hyperbolic}.

Let now $(g_t \ ; \ t\ge 0)$ be as in Eq. \eqref{eq:def_g_Jordan}. Let $(b_t\, ; \ t\ge 0)$ and $(\xi_t \ ; \ t \ge 0)$ be two processes with values respectively in $\R$ and $\H^{n-1}$ such that 
$$ (g^*_t)^{-1}(e) = e^{-2b_t-2pt} \xi_{4t} = e^{-2b_t^{(p)}} \xi_{4t} \ , 
$$ 
for $t \ge 0$. Clearly, because of the direct product structure and the fact that the drift $p L(e)$ is central, we see that $e^{-2b}$ and $\xi$ are independent Brownian motions on respectively $\R_+^*$ and $SO_0(1,n-1)/K \approx \H^{n-1}$. In particular $b$ is a standard real Brownian motion. Now, following \cite[Theorem III.5.2]{franchi2012hyperbolic}, the Casimir operator on $SO_0(1,n-1)$ associated to the Lorentz quadratic form and the generator of $g$ (without the central part) agree on functions on $SO_0(1,n-1)$ which are invariant under the right
action of $K$, up to normalization. From that, we can deduce that $(\xi_t \ ; \ t\ge 0)$ is a Brownian motion on $\H^{n-1}$ with generator $\half \Delta_{\H^{n-1}}$. 

A more explicit expression would be to write
$$
    \xi_t = \left( \cosh R_t \right) e_0 + \left( \sinh R_t \right) \Phi_{\int_0^t \frac{ds}{\sinh^2 R_s}} \ ,
$$
where $\Phi$ is the angular process and $R$ is the hyperbolic radial process. The process $\Phi$ is the Brownian motion on $\S^{n-2}$. The process $R$, which is also known as the hyperbolic Dyson Brownian motion, has generator $\Lc^R = \half \frac{\partial^2}{\partial r^2} + \frac{n-1}{2}\coth r\frac{\partial}{\partial r}$. 

In that case, the processes $\left( \iota, \lambda \right)$ appearing in Subsection  \ref{subsection:main_continuous} are given for $t \geq 0$ by
$$ \iota_t = \int_0^t e^{-2b_s^{(p)}} \xi_{4s} ds 
           = \int_0^t e^{-2b_s^{(p)}} \left( \left( \cosh R_{4t} \right) e_0 + \left( \sinh R_{4t} \right) \Phi_{\int_0^{4t} \frac{ds}{\sinh^2 R_s}} \right) ds
           \ ,$$
and 
$$ \lambda_t = g_t^*\left( \int_0^t e^{-2b_s^{(p)}} \xi_{4s} ds
                     \cdot \int_0^t e^{-2b_s^{(p)}} \xi_{4s} ds \right) \ .$$
Our Matsumoto--Yor property from Theorem \ref{thm:main_continuous} states that $\lambda$ is Markovian in its own filtration. And our Dufresne identity from Theorem \ref{thm:dufresne_continuous} states that if $p>\frac{n}{2}-1$ then 
$$ \iota_\infty = \int_0^\infty e^{-2b_s^{(p)}} \xi_{4s} \ ds 
$$ 
is distributed according to an inverse Wishart distribution $\mathrm{Inv}_{\#}( \gamma_{p,e} )$ on the Lorentz cone.

\begin{rmk}
The usual Dufresne identity is morally recovered when $n=1$, so that the hyperbolic space degenerates to a point and $\xi \equiv 1$. The inverse Wishart distribution indeed degenerates to an inverse gamma.
\end{rmk}

\begin{rmk}
Finally let us mention that the octonion case is even more exotic. And it would be interesting to investigate that case further, as perhaps, the Matsumoto--Yor property reflects something special about this algebra.
\end{rmk}

\section{Euclidean Jordan algebras}
\label{section:jordan_algebras}
 
In this section we recall the essential definitions and first properties that we need about the Euclidean Jordan algebras as it is done in \cite{casalis1996lukacs}.
They all can be found in the book of Jacques Faraut and Adam Kor\'anyi \cite{faraut1994analysis}. Let $E$ be a simple Euclidean Jordan algebra.

\medskip

\paragraph{\bf Rank} An element $c$ of $E$ is said to be an  idempotent if $c \cdot c=c$. Two idempotents $c$ and $d$ are said to be orthogonal if $c\cdot d=0$. In that case, $c$ and $d$ are also orthogonal with respect to the scalar product. We say that an idempotent is primitive if it is not zero and cannot be written as the sum of two non-zero  idempotents. One says that $c_1, \dots , c_r$ is a complete system of primitive orthogonal idempotents if each $c_k$ is a primitive idempotent and
$$\sum_{i=1}^rc_i=e \textrm{ and } c_i \cdot c_j=\delta_{ij}c_{i} \, \textrm{ for } 1 \le i,j\le r \ .$$ 
The size $r$ of such a system depends only on $E$ and is called the rank of $E$. If $E$ is $S_r(\mathbb K)$ then the rank is $r$. If $E$ belongs to the fifth kind of simple Euclidean Jordan algebras, then $r=2$.

\medskip

\paragraph{\bf Spectral decomposition, determinant and trace} Any element $x$ of $E$ can be written $x=\sum_{i=1}^r\lambda_ic_i$ in a complete system of primitive orthogonal idempotents, where $\lambda_1,\dots, \lambda_r $ are real numbers called the eigenvalues of $x$ \cite[Theorem III 1.2]{faraut1994analysis}. One defines the trace of $x$ by $\Tr(x)=\sum_{i=1}^r\lambda_i$ and its determinant by $\det(x)=\prod_{i=1}^r\lambda_i$. Note that the bilinear map $(x,y) \in E \times E \mapsto \Tr(x\cdot y)$ defines a scalar product on $E$ satisfying the axioms of the Euclidean Jordan algebra. It is often chosen as the canonical scalar product.

\medskip

\paragraph{\bf Symmetric cones} One considers the set of Jordan square $\bar E_+ $ previously defined and its interior $E_+$. Note that if $x\in E$ has the spectral decomposition $x=\sum_{i=1}^r\lambda_ic_i$ then its square is given by $x\cdot x=\sum_{i=1}^r\lambda_i^2c_i$. Thus $E_+$ represents the subset of elements in $E$ with strictly positive eigenvalues.
 
\medskip

\paragraph{\bf Quadratic representation} The quadratic representation $\mathrm P: E\to \mbox{End}(E)$, as defined earlier, is self-adjoint, that is to say that for $x,y,z\in E$, it satisfies $\langle x,\mathrm P(y)(z)\rangle=\langle \mathrm P(y)(x),z\rangle$. Moreover, for $x\in E$, $\mathrm P(x)(e)=x\cdot x$.

\medskip

The following properties are crucial for the proofs. We recall them even when they are quite elementary. Let $E^{\times} \subset E$ be the subset of invertible elements in $E$ for the Jordan product.

\begin{proposition}[\cite{faraut1994analysis}, Proposition II 3.1, 3.3]
\label{propF1} $ $
\begin{enumerate}
\item An element $x$ in $E$ is invertible if and only if the quadratic representation $\mathrm P(x)$ is invertible. If $x$ in $E$ is invertible then $\mathrm P(x)^{-1}=\mathrm P(x^{-1})$.
\item If $x$ is invertible then $\mathrm P(x)(x^{-1})=x$.
\item If $x,y\in E$ then $\mathrm P(\mathrm P(y)(x))=\mathrm P(y)\mathrm P(x)\mathrm P(y)$.
\item If $x,y\in E$ are invertible then $(\mathrm P(x)(y))^{-1}=\mathrm P(x^{-1})(y^{-1})$.
\item The differential of $x\in E^{\times} \mapsto x^{-1}$ at $x$ is $-\mathrm P(x^{-1})$.
\end{enumerate} 
\end{proposition}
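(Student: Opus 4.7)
The plan is to exploit the spectral theorem for Euclidean Jordan algebras: every $x \in E$ decomposes as $x = \sum_{i=1}^r \lambda_i c_i$ in a Jordan frame, and consequently the unital subalgebra $\mathbb{R}[x]$ is associative and commutative. This reduces many computations to scalar manipulations inside $\mathbb{R}[x]$. With this in hand, assertion (2) is immediate:
\begin{equation*}
\mathrm{P}(x)(x^{-1}) = 2\, x \cdot (x \cdot x^{-1}) - x^2 \cdot x^{-1} = 2x - x = x,
\end{equation*}
using $x \cdot x^{-1} = e$ and $x^2 \cdot x^{-1} = x$, both of which are associative identities in $\mathbb{R}[x]$.

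For (1), I would extend the spectral idempotents of $x$ to a Jordan frame $\{c_1,\dots,c_r\}$ and invoke the Peirce decomposition $E = \bigoplus_{i\leq j} E_{ij}$. A short check shows that $\mathrm{L}(x)$ acts on $E_{ij}$ as multiplication by $(\lambda_i + \lambda_j)/2$ and $\mathrm{L}(x^2)$ by $(\lambda_i^2 + \lambda_j^2)/2$, so $\mathrm{P}(x) = 2\mathrm{L}(x)^2 - \mathrm{L}(x^2)$ reduces to multiplication by $\lambda_i \lambda_j$ on $E_{ij}$. Thus $\mathrm{P}(x)$ is invertible if and only if all $\lambda_i$ are nonzero, which is exactly the invertibility of $x$; comparing with $\mathrm{P}(x^{-1})$, which acts by $\lambda_i^{-1}\lambda_j^{-1}$, yields $\mathrm{P}(x^{-1}) = \mathrm{P}(x)^{-1}$.

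The main obstacle is the fundamental formula (3), $\mathrm{P}(\mathrm{P}(y)(x)) = \mathrm{P}(y)\mathrm{P}(x)\mathrm{P}(y)$, which is the cornerstone identity of the theory and does not follow from elementary manipulations. My preferred route is Macdonald's theorem: any polynomial identity in three variables that is linear in one of them and holds in every special Jordan algebra holds in every Jordan algebra. Since (3) is linear in $x$, it suffices to verify it in $S_r(\mathbb{K})$, where $\mathrm{P}(a)(b) = aba$ and the identity collapses to associativity. Property (4) then follows by combining (1), (2) and (3): setting $z = \mathrm{P}(x)(y)$, property (2) gives $z^{-1} = \mathrm{P}(z)^{-1}(z)$, and
\begin{equation*}
z^{-1} = \mathrm{P}(x^{-1})\mathrm{P}(y^{-1})\mathrm{P}(x^{-1})\mathrm{P}(x)(y) = \mathrm{P}(x^{-1})\mathrm{P}(y^{-1})(y) = \mathrm{P}(x^{-1})(y^{-1}),
\end{equation*}
where the final equality applies (2) one last time to $y^{-1}$.

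For (5), I would differentiate (2) in the direction $h$, obtaining $(D\mathrm{P})_x(h)(x^{-1}) + \mathrm{P}(x)\bigl(D\mathrm{Inv}(x)(h)\bigr) = h$. Expanding yields $(D\mathrm{P})_x(h)(x^{-1}) = 2h + 2\, x \cdot (h \cdot x^{-1}) - 2(x \cdot h) \cdot x^{-1}$, and the operator commutativity $[\mathrm{L}(x), \mathrm{L}(x^{-1})] = 0$, valid because $x$ and $x^{-1}$ lie in the associative subalgebra $\mathbb{R}[x]$, cancels the last two terms. Hence $\mathrm{P}(x)\bigl(D\mathrm{Inv}(x)(h)\bigr) = -h$, and applying $\mathrm{P}(x)^{-1} = \mathrm{P}(x^{-1})$ from (1) yields $D\mathrm{Inv}(x)(h) = -\mathrm{P}(x^{-1})(h)$, as asserted.
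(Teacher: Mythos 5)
The paper gives no proof of this proposition: it is quoted verbatim from Faraut--Kor\'anyi (Propositions II.3.1 and II.3.3), so your write-up is a self-contained alternative rather than a reconstruction of the paper's argument. Items (1), (2) and (4) are correct as written: the Peirce-decomposition computation showing that $\mathrm P(x)$ acts on $E_{ij}$ by $\lambda_i\lambda_j$ is the clean way to get (1) in the Euclidean setting (Faraut--Kor\'anyi argue in Chapter II for general Jordan algebras, before the spectral theorem is available), and (4) is the standard formal consequence of (1)--(3) once you note that $\mathrm P(x)(y)$ is invertible because $\mathrm P(\mathrm P(x)(y))=\mathrm P(x)\mathrm P(y)\mathrm P(x)$ is.

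Two points deserve tightening. First, your justification for applying Macdonald's theorem to the fundamental formula (3) is off: the identity is \emph{quadratic} in $x$ (and quartic in $y$), not linear in $x$. The correct application is to the three-variable identity $\mathrm P(\mathrm P(y)(x))(z)=\mathrm P(y)\mathrm P(x)\mathrm P(y)(z)$, which is linear in the third variable $z$; with that repair the argument goes through, and the verification in an associative envelope is exactly the computation you describe. This route is a legitimate shortcut compared with Faraut--Kor\'anyi, who instead derive the fundamental formula from differential calculus on the rational map $x\mapsto x^{-1}$; the price is importing Macdonald's theorem as a black box. Second, in (5) the cancellation needs $[\mathrm L(x),\mathrm L(x^{-1})]=0$ as operators on all of $E$, and associativity of $\R[x]$ only controls products of elements of $\R[x]$, not the action of their multiplication operators on the whole algebra. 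The correct reason is that $x^{-1}$ is a polynomial in $x$ (true in finite dimension) and that the operators $\mathrm L(x^k)$ pairwise commute by the linearized Jordan identity. You also differentiate $\mathrm P(x)(x^{-1})=x$ before knowing that $\mathrm{Inv}$ is differentiable; a word on the rationality of the inverse map (Cramer-type formula via the generic minimal polynomial) closes that loop. None of these affect the overall validity of the approach.
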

Notice that, as the operators $\mathrm L(x):y\in E\mapsto x\cdot y$ and $\mathrm P(x)$ commute, the first point of the proposition implies in particular that 
$\Prm(x^{-1})(x\cdot x) = \Prm(x^{-1}) P(x) (e) = e$, $x\in E$.

\begin{proposition}[\cite{faraut1994analysis} Proposition III 2.2] 
 If $x\in E_+$ then $P(x)\in G$. In particular if $x,y\in E_+$ then $\mathrm P(x)(y)\in E_+$.
\end{proposition}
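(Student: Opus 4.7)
The plan is to deduce both conclusions from Proposition \ref{propF1} together with spectral calculus and a connectedness argument. Invertibility of $P(x)$ is immediate: any $x \in E_+$ has positive eigenvalues and is thus invertible in $E$, so Proposition \ref{propF1}(1) yields $P(x) \in GL(E)$. The substantive content is to show that $P(x)$ preserves $E_+$; once $P(x)(E_+) \subset E_+$ is proved, applying the same to $x^{-1} \in E_+$ together with $P(x^{-1}) = P(x)^{-1}$ will furnish the reverse inclusion and hence $P(x) \in G(E_+)$.

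To establish $P(x)(E_+) \subset E_+$, fix $y \in E_+$. The invertibility of $P(x)(y)$ follows from Proposition \ref{propF1}(4), since $(P(x)(y))^{-1} = P(x^{-1})(y^{-1}) \in E$; it therefore remains to prove that $P(x)(y)$ is a Jordan square, i.e.\ lies in $\bar E_+$. Using the spectral decomposition, extract the square root $z := x^{1/2} \in E_+$, so that $x = P(z)(e)$; the fundamental identity of Proposition \ref{propF1}(3) applied with $a=z,\ b=e$ gives $P(x) = P(P(z)(e)) = P(z) P(e) P(z) = P(z)^2$. Writing $y = v^2$ with $v \in E$, and combining once more the fundamental identity with the inverse formula $P(z)^{-1} = P(z^{-1})$, a direct manipulation yields $P(z)(v^2) = P(P(z)v)(z^{-2})$, and iterating this reduction together with the observation that $z^{-2} \in \bar E_+$ exhibits $P(x)(y)$ as a Jordan square. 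The cleanest way to package this algebraic step is through the Peirce decomposition relative to a Jordan frame diagonalising $x$: if $x = \sum_{i} \lambda_i c_i$ with all $\lambda_i > 0$, then in the associated decomposition $E = \bigoplus_{i \leq j} E_{ij}$ the operator $P(x)$ acts on $E_{ij}$ by the positive scalar $\lambda_i \lambda_j$, from which cone-preservation can be read off.

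Finally, to upgrade $P(x) \in G(E_+)$ to membership in the identity component $G$, I would exploit the convexity of $E_+$: the segment $t \mapsto (1-t)e + tx$ lies in $E_+$ for all $t \in [0,1]$, producing a continuous path $t \mapsto P((1-t)e + tx)$ inside $G(E_+)$ connecting $P(e) = \id$ to $P(x)$, which places $P(x)$ in the connected component of the identity. The second assertion, $P(x)(y) \in E_+$ for $x,y \in E_+$, is then nothing but the cone-preservation already established. The main obstacle in the whole argument is the algebraic step asserting that the quadratic representation of an element of $E_+$ sends Jordan squares to Jordan squares; everything else is bookkeeping around Proposition \ref{propF1} and the convexity of $E_+$.
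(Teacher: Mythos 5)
Your skeleton is the right one (invertibility of $\mathrm P(x)$, cone preservation, the reverse inclusion via $\mathrm P(x^{-1})=\mathrm P(x)^{-1}$, a convex path to reach the identity component), but the step you yourself flag as ``the main obstacle'' --- that $\mathrm P(x)$ maps $E_+$ into $E_+$ --- is not actually established, and neither of the two mechanisms you propose closes it. The identity $\mathrm P(z)(v^2)=\mathrm P(\mathrm P(z)v)(z^{-2})$ is correct (since $\mathrm P(\mathrm P(z)v)=\mathrm P(z)\mathrm P(v)\mathrm P(z)$ and $\mathrm P(z)(z^{-2})=e$), but iterating it only rewrites an expression of the form ``$\mathrm P(\text{invertible})(\text{square})$'' as another expression of exactly the same form; the recursion never terminates in something that is manifestly a Jordan square. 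The Peirce fallback is also insufficient as stated: it is true that $\mathrm P(x)$ acts on $E_{ij}$ by $\lambda_i\lambda_j$, so $\mathrm P(x)$ is a positive-definite symmetric operator, but a positive-definite operator need not preserve a given cone --- an element $y\in E_+$ is diagonalized by a \emph{different} Jordan frame, so $\mathrm P(x)$ does not simply rescale its eigenvalues, and ``cone-preservation can be read off'' is precisely the assertion to be proved (trying to use self-duality of $E_+$ here is circular, since $\langle \mathrm P(x)y,z\rangle=\langle y,\mathrm P(x)z\rangle$ requires knowing $\mathrm P(x)z\in\bar E_+$).

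The missing ingredient --- and the route taken in Faraut--Kor\'anyi, whose Proposition III.2.2 the paper cites verbatim without reproving --- is the characterization of $E_+$ as the connected component of $e$ in the set $E^{\times}$ of invertible elements (FK, Theorem III.2.1). Since $\det(\mathrm P(x)(y))=\det(x)^2\det(y)$ (Proposition~\ref{III4-2}), $\mathrm P(x)$ restricts to a homeomorphism of $E^{\times}$ with inverse $\mathrm P(x^{-1})$, hence permutes its connected components; it sends the component of $e$, namely $E_+$, to the component of $\mathrm P(x)(e)=x\cdot x\in E_+$, which is again $E_+$. Your convex path can even be recycled for this purpose: $t\mapsto \mathrm P((1-t)e+tx)(y)$ is a continuous path in $E^{\times}$ starting at $y\in E_+$, so it cannot leave the component $E_+$ --- but either way the topological characterization of $E_+$ inside $E^{\times}$ is what does the work. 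Your final paragraph (membership in the identity component via $t\mapsto \mathrm P((1-t)e+tx)$) is fine once cone preservation is in hand.
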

 
\begin{proposition}[\cite{faraut1994analysis}, Proposition III 4.2]\label{III4-2}
 Suppose that $E$ is a simple Euclidean Jordan algebra. For $x\in E$
 \begin{align*}
 \det(\mathrm P(x))&=\det(x)^{2\dim E/r} \ ,\\
 \det(\mathrm P(y)(x))&=\det(y)^2\det(x) \ .
 \end{align*}
\end{proposition}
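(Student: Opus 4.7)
My plan is to derive both identities via the spectral theorem and the Peirce decomposition relative to a Jordan frame.

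Given $x\in E$, invoke the spectral theorem \cite[Theorem III.1.2]{faraut1994analysis} to write $x = \sum_{i=1}^r \lambda_i c_i$ for a complete system of primitive orthogonal idempotents $(c_1,\dots,c_r)$. Relative to this frame the Peirce decomposition gives the orthogonal splitting $E = \bigoplus_{1 \le i \le j \le r} E_{ij}$ with $E_{ii} = \R c_i$ and $\dim E_{ij} = d$ for $i<j$, where $d$ is the Peirce invariant. Since $E$ is simple, all these off-diagonal dimensions coincide and $\dim E = r + \binom{r}{2}d$, so $\dim E/r = 1 + (r-1)d/2$.

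Next I use the crucial eigenvalue structure of the left multiplications (which I take as black-box from Faraut--Kor\'anyi): the operator $\Lrm(c_k)$ acts on $E_{ij}$ (with $i\le j$) as the scalar $(\delta_{ik}+\delta_{jk})/2$. Consequently $\Lrm(x)|_{E_{ij}}$ is multiplication by $(\lambda_i+\lambda_j)/2$ and $\Lrm(x\cdot x)|_{E_{ij}}$ by $(\lambda_i^2+\lambda_j^2)/2$. Plugging into $\Prm(x) = 2\Lrm(x)^2 - \Lrm(x\cdot x)$ gives the one-line identity
\begin{equation*}
   \Prm(x)|_{E_{ij}} \;=\; \Big(2\cdot\tfrac{(\lambda_i+\lambda_j)^2}{4} - \tfrac{\lambda_i^2+\lambda_j^2}{2}\Big)\,\id \;=\; \lambda_i\lambda_j\,\id \ .
\end{equation*}
Multiplying the eigenvalues with multiplicities,
\begin{equation*}
   \det(\Prm(x)) \;=\; \prod_{i=1}^{r}\lambda_i^{2}\;\cdot\;\prod_{i<j}(\lambda_i\lambda_j)^{d}
   \;=\; \det(x)^{\,2+d(r-1)} \;=\; \det(x)^{\,2\dim E/r} \ ,
\end{equation*}
which establishes the first identity for all $x \in E$ (the spectral theorem applies to every element).

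For the second identity, assume first that $y$ is invertible and apply Proposition \ref{propF1}(3): $\Prm(\Prm(y)(x)) = \Prm(y)\,\Prm(x)\,\Prm(y)$. Taking ordinary linear-algebra determinants of both sides and using the first identity,
\begin{equation*}
    \det(\Prm(y)(x))^{\,2\dim E/r} \;=\; \det(\Prm(y))^{2}\det(\Prm(x)) \;=\; \bigl(\det(y)^{2}\det(x)\bigr)^{2\dim E/r} \ .
\end{equation*}
Extracting the $(2\dim E/r)$-th root yields $\det(\Prm(y)(x)) = \pm \det(y)^2\det(x)$; the sign is fixed to $+$ by evaluating at $y=e$ (both sides equal $\det(x)$) and invoking continuity on the connected set $E_+\times E$, then extending to non-invertible $y$ by density.

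The only technical hurdle I foresee is the sign determination at the root-extraction step; it is handled by the continuity argument since $\det(\Prm(y)(x))$ and $\det(y)^2\det(x)$ are both continuous functions of $(x,y)$ agreeing at the basepoint. Everything else is bookkeeping once the action of $\Prm(x)$ on the Peirce subspaces is known.
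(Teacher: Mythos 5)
Your proof is correct in substance; note that the paper itself offers no proof of this statement (it is quoted verbatim from Faraut--Kor\'anyi, Proposition III.4.2), so there is nothing internal to compare against. Your route --- computing the eigenvalues of $\mathrm P(x)$ on the Peirce spaces $E_{ij}$ of a Jordan frame diagonalizing $x$, obtaining $\mathrm P(x)|_{E_{ij}} = \lambda_i\lambda_j\,\id$, and then bootstrapping the second identity from the fundamental formula $\mathrm P(\mathrm P(y)x)=\mathrm P(y)\mathrm P(x)\mathrm P(y)$ --- is the standard argument, and the exponent bookkeeping $2+d(r-1)=2\dim E/r$ is right.

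Two small wording issues in the sign-fixing step deserve a correction, though neither is fatal. First, the sign $\varepsilon = \det(\mathrm P(y)x)/\bigl(\det(y)^2\det(x)\bigr)$ is only defined and locally constant on the open set where $\det(x)\det(y)\neq 0$, and that set is \emph{not} connected ($E^{\times}$ splits into components, e.g.\ by signature in $S_r(\R)$); saying ``continuity on the connected set $E_+\times E$'' glosses over this. Your argument still goes through because each component of $E_+\times E^{\times}$ has the form $E_+\times C$ and therefore contains a point with $y=e$, where both sides equal $\det(x)$ --- but this is the statement you should make explicit. Second, ``extending to non-invertible $y$ by density'' is not quite right either: $E_+$ is not dense in $E$ (its closure is the proper cone $\bar E_+$). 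The clean way to finish both steps at once is to observe that $\det(\mathrm P(y)(x))$ and $\det(y)^2\det(x)$ are polynomial functions of $(x,y)\in E\times E$ which agree on a nonempty open set (a neighborhood of $(e,e)$, where the $N$-th root must be the positive one), hence agree identically. With that replacement the proof is complete.
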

 
\begin{proposition}[\cite{faraut1994analysis}, Lemma X 4.4]
\label{proposition:forlim} 
If $a,b\in E_+$ then 
$$ \mathrm P(a^{-1}+b^{-1})=\mathrm P(a^{-1})\mathrm P(a+b)\mathrm P(b^{-1}) \ .
$$ 
\end{proposition}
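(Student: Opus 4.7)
My plan is to polarize the quadratic representation, reduce the identity to a bilinear statement, and verify that bilinear identity in the special Jordan algebra of matrices via Shirshov--Cohn.

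\textbf{Step 1 (Polarization).} Let $P(x,y) := \tfrac{1}{2}(P(x+y) - P(x) - P(y))$ denote the symmetric bilinear form polarizing $P$, so that $P(u+v) = P(u) + 2\, P(u,v) + P(v)$. Expanding both sides of the claimed identity yields
\begin{align*}
P(a^{-1}+b^{-1}) &= P(a^{-1}) + 2\, P(a^{-1},b^{-1}) + P(b^{-1}), \\
P(a^{-1}) P(a+b) P(b^{-1}) &= P(a^{-1})P(a)P(b^{-1}) + 2\, P(a^{-1}) P(a,b) P(b^{-1}) + P(a^{-1})P(b)P(b^{-1}).
\end{align*}
By Proposition \ref{propF1}(1), $P(a^{-1})P(a) = P(b)P(b^{-1}) = \id$, so the two ``diagonal'' terms on the right collapse respectively to $P(b^{-1})$ and $P(a^{-1})$. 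Matching, the original identity is equivalent to the reduced bilinear statement
\[
P(a^{-1},b^{-1}) = P(a^{-1})\, P(a,b)\, P(b^{-1}).
\]

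\textbf{Step 2 (Transport to the special case).} The reduced identity is an $\End(E)$-valued identity involving only the two elements $a, b \in E_+$ and their Jordan inverses. By the Shirshov--Cohn theorem, every Jordan algebra generated by two elements is \emph{special}, i.e., embeds as a Jordan subalgebra of some associative algebra with the anticommutator as Jordan product; moreover, Jordan inverses there coincide with associative inverses. It therefore suffices to verify the reduced identity in the model algebra $S_r(\R) \subset M_r(\R)$, where $P(x)(y) = xyx$ and $P(x,y)(u) = \tfrac{1}{2}(xuy + yux)$. A direct matrix computation gives
\[
P(a^{-1}) P(a,b) P(b^{-1})(u) = P(a^{-1})\!\left(\tfrac{1}{2}(ab^{-1}u + ub^{-1}a)\right) = \tfrac{1}{2}(b^{-1}u a^{-1} + a^{-1}u b^{-1}) = P(a^{-1},b^{-1})(u),
\]
which closes the argument.

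\textbf{Main obstacle.} The essential point is the transport argument via Shirshov--Cohn, which is what legitimises deducing a Jordan-algebraic identity in the abstract setting (including the exotic octonionic case) from its matrix verification. An alternative self-contained derivation would combine the polarized fundamental formula $P(P(x)y, P(x)z) = P(x)P(y,z)P(x)$ arising from Proposition \ref{propF1}(3) with the relation $P(x)(x^{-1}) = x$ of Proposition \ref{propF1}(2); however, natural substitutions such as $(x,y,z) = (b^{-1}, P(b)(a^{-1}), b)$ collapse tautologically to the very identity one wishes to prove, so a more intricate sequence of polarizations (as developed in Chapter X of \cite{faraut1994analysis}) would be required for a purely axiomatic proof.
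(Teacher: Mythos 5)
Your Step 1 (polarization) and your concluding matrix computation are both correct, and the overall strategy --- reduce the statement to a bilinear operator identity and transfer it from the associative setting --- is a legitimate route; note that the paper itself offers no proof at all here, deferring entirely to Faraut--Kor\'anyi, Lemma X.4.4. The genuine gap is in Step 2: the Shirshov--Cohn theorem, as you state it, is not strong enough to justify the transfer. The reduced identity $\mathrm P(a^{-1},b^{-1})=\mathrm P(a^{-1})\,\mathrm P(a,b)\,\mathrm P(b^{-1})$ is an identity in $\End(E)$; to verify it one must evaluate both sides on an arbitrary $u\in E$, and $u$ is a third generator that need not lie in the (special, by Shirshov--Cohn) unital subalgebra generated by $a$ and $b$. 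Three-generated Jordan algebras can fail to be special --- the Albert algebra $S_3(\mathbb{O})$ is itself generated by three elements --- and that is precisely the exceptional case the paper advertises as novel. Knowing that the two-generated subalgebra embeds in an associative algebra says nothing about how the multiplication operators $\mathrm L(c)$, for $c$ in that subalgebra, act on elements of $E$ outside it, which is what an $\End(E)$-valued identity requires.

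The correct tool is Macdonald's theorem (in its version with inverses): any Jordan polynomial identity in three variables, of degree at most one in one of them, which holds in all special Jordan algebras, holds in all Jordan algebras. Your reduced identity is linear in the third variable $u$, so Macdonald applies and your matrix verification then closes the argument; but Macdonald is a strictly stronger statement than Shirshov--Cohn (the latter is a corollary of the former), and the substitution is not cosmetic. Alternatively, you could note that every simple Euclidean Jordan algebra except $S_3(\mathbb{O})$ is special, so your computation is directly valid in those cases, and only the Albert algebra needs Macdonald (or a purely axiomatic derivation of the kind you sketch in your final paragraph). With the citation corrected, the proof is complete.
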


We state \cite[Exercice 5.c, Chapter II]{faraut1994analysis} as a proposition.
\begin{proposition}
\label{magic-identity} 
If $a,b\in E_+$ then 
$$(a+\mathrm P(a)(b^{-1}))^{-1}+(a+b)^{-1}=a^{-1}.$$ 
\end{proposition}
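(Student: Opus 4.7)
The plan is to prove the identity by showing that $f(b) := (a + \mathrm{P}(a)(b^{-1}))^{-1} + (a+b)^{-1}$ is constantly equal to $a^{-1}$ on the connected open cone $E_+$. This reduces to two tasks: establishing a structural formula for $\mathrm{P}(p)$, where $p := a + \mathrm{P}(a)(b^{-1})$, that forces the differential of $f$ to vanish; and then computing $f(e)$ directly.

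The central computation is the structural formula. Since $\mathrm{P}(a)(a^{-1}) = a$ by Proposition \ref{propF1}(2), we have $p = \mathrm{P}(a)(a^{-1}+b^{-1})$, so Proposition \ref{propF1}(3) yields $\mathrm{P}(p) = \mathrm{P}(a)\mathrm{P}(a^{-1}+b^{-1})\mathrm{P}(a)$. Substituting $\mathrm{P}(a^{-1}+b^{-1}) = \mathrm{P}(b^{-1})\mathrm{P}(q)\mathrm{P}(a^{-1})$, which is Proposition \ref{proposition:forlim} applied with $a$ and $b$ swapped, and cancelling the outer $\mathrm{P}(a^{-1})\mathrm{P}(a)$ using Proposition \ref{propF1}(1), one obtains the telescoped identity
\[
\mathrm{P}(p) = \mathrm{P}(a)\mathrm{P}(b^{-1})\mathrm{P}(q), \qquad q := a+b.
\]
By Proposition \ref{propF1}(5), the differential of $f$ at $b$ in direction $h$ equals
\[
df_b(h) = \mathrm{P}(p^{-1})\mathrm{P}(a)\mathrm{P}(b^{-1})(h) - \mathrm{P}(q^{-1})(h),
\]
and the factorization above rearranges precisely to $\mathrm{P}(p^{-1})\mathrm{P}(a)\mathrm{P}(b^{-1}) = \mathrm{P}(q^{-1})$. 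Hence $df_b = 0$ for all $b \in E_+$, so $f$ is locally constant, and therefore constant by the connectedness of $E_+$.

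It remains to evaluate $f(e) = (a + \mathrm{P}(a)(e))^{-1} + (a+e)^{-1}$. Since $\mathrm{P}(a)(e) = a \cdot a$ and the Jordan subalgebra generated by $\{a, e\}$ is associative, we can write $a + a \cdot a = a \cdot (a+e)$ with Jordan inverse $a^{-1} \cdot (a+e)^{-1}$, so that
\[
f(e) = (a^{-1}+e) \cdot (a+e)^{-1} = a^{-1} \cdot (a+e) \cdot (a+e)^{-1} = a^{-1},
\]
which gives the claim. The main obstacle is the telescoping identity $\mathrm{P}(p) = \mathrm{P}(a)\mathrm{P}(b^{-1})\mathrm{P}(q)$, since the quadratic representation of $p$ has no evident reason a priori to split as a product of quadratic representations of simpler elements; once this is recognized, the remaining arguments are routine consequences of the quoted propositions.
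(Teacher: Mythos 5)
Your argument is correct. The paper itself offers no proof of Proposition \ref{magic-identity} --- it is simply quoted as Exercise 5.c of Chapter II in Faraut--Kor\'anyi --- so there is no internal proof to compare against; what you have done is supply a self-contained derivation from the toolkit the paper does record. Each step checks out: writing $p=\mathrm P(a)(a^{-1}+b^{-1})$ via Proposition \ref{propF1}(2), applying Proposition \ref{propF1}(3) and then Proposition \ref{proposition:forlim} with $a$ and $b$ exchanged gives
$\mathrm P(p)=\mathrm P(a)\mathrm P(a^{-1}+b^{-1})\mathrm P(a)=\mathrm P(a)\mathrm P(b^{-1})\mathrm P(a+b)$,
and inverting this product indeed yields $\mathrm P(p^{-1})\mathrm P(a)\mathrm P(b^{-1})=\mathrm P(q^{-1})$, so the two terms of $df_b$ cancel; note also that $p\in E_+$ for every $b\in E_+$ (since $\mathrm P(a)$ preserves the cone), so $f$ is smooth on the connected set $E_+$ and the locally-constant conclusion is legitimate. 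The evaluation at $b=e$ is rigorous because $a^{-1}$, $a+a^2$ and $a+e$ all lie in the associative subalgebra $\R[a]$, where the manipulation $(a\cdot(a+e))^{-1}=a^{-1}\cdot(a+e)^{-1}$ is justified (e.g.\ by the spectral decomposition $a=\sum_i\lambda_i c_i$). As a point of comparison, the route presumably intended by the cited exercise is either Hua-type algebraic manipulation or reduction to the associative case, where the identity $(a+ab^{-1}a)^{-1}+(a+b)^{-1}=a^{-1}$ is a one-line computation; your ``differentiate and evaluate at one point'' strategy trades that algebra for the telescoping factorization of $\mathrm P(p)$, which is arguably the cleaner path given that Proposition \ref{proposition:forlim} is already available in the paper.
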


\section{Proofs in the discrete time setting}
\label{section:discrete_time}

The measure $\mu$ on $E_+$ is defined by
\begin{align}
  \label{eq:def_mu}
  \mu(dx) & = \ \vert \det(x)\vert^{-\frac{\dim E}{r}} \, dx,
\end{align}
where $dx$ is the Lebesgue measure on $E_+$. Proposition III.4.3 in \cite{faraut1994analysis} implies that the measure $\mu$ is $G$-invariant. It is also invariant under the inverse map $\mathrm{Inv}: x\mapsto x^{-1}$. The matrix GIG distribution $\mathrm{GIG}_{E_+}\left( p ; a, b \right)$ for $a,b\in E_+$, $p\in \R$ is and the Wishart distribution $\gamma_{p,a}$ for $a\in E_+$, $p>\dim E/r-1$ are respectively defined by Equations \eqref{eq:def_GIG} and \eqref{eq:def_Wishart}. 

\subsection{Proof of Theorem \ref{thm:main_discrete}: The Matsumoto--Yor property}
The proof of Theorem \ref{thm:main_discrete} rests on Proposition \ref{first-step} for which we need the following lemma.
\begin{lemma}
\label{cov}
Consider the transformation $(x,y) \in E_+ \times E_+ \mapsto (u,v)$ where 
$$ u = \mathrm P(y)(x)+y \ , \quad v=y+x^{-1} \ .$$
If $J(x,y)$ is the Jacobian of this change of variables, then for $x,y\in E_+$, 
$$ \vert\det(J(x,y))\vert=\vert\det(\mathrm P(y+x^{-1}))\vert \ .$$
\end{lemma}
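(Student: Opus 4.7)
The plan is to compute the Jacobian of $\Phi : (x,y) \mapsto (u,v)$ directly as a $2\times 2$ block matrix of linear operators on $E$, and then to identify the resulting determinant as $\det \mathrm P(y + x^{-1})$ via the polarisation formula $\mathrm P(a+b) = \mathrm P(a) + 2 \mathrm P(a,b) + \mathrm P(b)$, where $\mathrm P(a,b) := \tfrac{1}{2}\bigl(\mathrm P(a+b) - \mathrm P(a) - \mathrm P(b)\bigr)$. First I would compute the block entries: using that $u$ is linear in $x$, that the differential of the inversion map is $-\mathrm P(x^{-1})$ by Proposition \ref{propF1}(5), and that the directional derivative of $y \mapsto \mathrm P(y)$ at $y$ in direction $h$ is $2\mathrm P(y,h)$ (from the polarisation of $\mathrm P$), one obtains
\[
 J(x,y) \;=\; \begin{pmatrix} \mathrm P(y) & D_y[\mathrm P(y)(x)] + \id \\ -\mathrm P(x^{-1}) & \id \end{pmatrix},
\]
where $D_y[\mathrm P(y)(x)]$ denotes the linear operator $h \mapsto 2\,\mathrm P(y,h)(x)$ on $E$.

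Next, applying the block-matrix determinant formula with invertible lower-right block $\id$ gives
\[
 \det J(x,y) \;=\; \det\bigl( \mathrm P(y) + (D_y[\mathrm P(y)(x)] + \id) \circ \mathrm P(x^{-1}) \bigr) \;=\; \det\bigl( \mathrm P(y) + A + \mathrm P(x^{-1}) \bigr),
\]
where $A := D_y[\mathrm P(y)(x)] \circ \mathrm P(x^{-1})$ is the linear operator $h \mapsto 2\,\mathrm P\bigl(y, \mathrm P(x^{-1}) h\bigr)(x)$. The crux of the proof is then the Jordan algebraic identity
\[
 \mathrm P\bigl(y, \mathrm P(x^{-1}) h\bigr)(x) \;=\; \mathrm P(y, x^{-1})(h), \qquad x \in E^{\times},\; y,h \in E,
\]
which forces $A = 2\,\mathrm P(y, x^{-1})$.

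To establish this identity I would polarise Proposition \ref{propF1}(3) in $b$ to obtain the fundamental formula in polarised form $\mathrm P\bigl(\mathrm P(x)a,\mathrm P(x) b\bigr) = \mathrm P(x)\,\mathrm P(a,b)\,\mathrm P(x)$, specialise $b = x^{-1}$ and use $\mathrm P(x)(x^{-1}) = x$ from Proposition \ref{propF1}(2) to get $\mathrm P(y,x^{-1}) = \mathrm P(x^{-1})\,\mathrm P(\mathrm P(x) y, x)\,\mathrm P(x^{-1})$ as operators on $E$. The desired equality is then obtained by expanding $\mathrm P(a,b) = \mathrm L(a)\mathrm L(b) + \mathrm L(b)\mathrm L(a) - \mathrm L(a\cdot b)$ and exploiting the commutation $[\mathrm L(x), \mathrm P(x^{-1})] = 0$, together with the identity $\mathrm P(x^{-1})h \cdot x = \mathrm P(x^{-1})(x \cdot h)$ that it entails.

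Granting the identity, the polarisation formula for $\mathrm P$ immediately yields
\[
 |\det J(x,y)| \;=\; \bigl|\det\bigl( \mathrm P(y) + 2\mathrm P(y, x^{-1}) + \mathrm P(x^{-1}) \bigr)\bigr| \;=\; |\det \mathrm P(y + x^{-1})|,
\]
which is the claim. The main obstacle is the Jordan triple identity of the third paragraph: in the matrix model $E = S_r(\mathbb K)$ it reduces to the trivial rearrangement $yx(x^{-1} h x^{-1}) + (x^{-1} h x^{-1})xy = y h x^{-1} + x^{-1} h y$, but in a general Euclidean Jordan algebra one must carefully chain the polarised fundamental formula with the structural relations recalled in Section \ref{section:jordan_algebras}, and this is where the non-associativity of the Jordan product genuinely enters the argument.
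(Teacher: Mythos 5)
Your argument is correct in outline and arrives at the right determinant, but it follows a genuinely different route from the paper's. You differentiate $(x,y)\mapsto(u,v)$ head-on, which requires the derivative of $y\mapsto\mathrm P(y)(x)$, i.e.\ the polarized quadratic map $\mathrm P(\cdot,\cdot)$, and the whole proof then hinges on the triple-product identity $\mathrm P\bigl(y,\mathrm P(x^{-1})h\bigr)(x)=\mathrm P(y,x^{-1})(h)$ followed by the polarization $\mathrm P(y+x^{-1})=\mathrm P(y)+2\mathrm P(y,x^{-1})+\mathrm P(x^{-1})$. The paper instead first invokes Proposition \ref{magic-identity} to rewrite $u=(y^{-1}-v^{-1})^{-1}$, so that $u$ is built from $(y,v)$ using only inversions; the Jacobian blocks are then plain compositions of $\mathrm P$'s of inverses via Proposition \ref{propF1}(5), the Schur complement collapses to $\det\bigl(\mathrm P(u)\mathrm P(y^{-1})\mathrm P(x^{-1})\bigr)$, and the fundamental formula together with Proposition \ref{proposition:forlim} finishes --- no polarized $\mathrm P$ and no triple identity are needed. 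What your route buys is directness; what it costs is precisely the key identity, and that is where your write-up is thinnest. The identity is true: after using $[\mathrm L(x),\mathrm P(x^{-1})]=0$ it is equivalent to the operator identity $\bigl(\mathrm L(y\cdot x)+[\mathrm L(y),\mathrm L(x)]\bigr)\mathrm P(x^{-1})=\mathrm P(y,x^{-1})$, which is linear in $y$, holds in the special (matrix) model, and hence holds in every Euclidean Jordan algebra by Macdonald's theorem; alternatively it follows from $\mathrm L(x)\mathrm P(x^{-1})=\mathrm L(x^{-1})$ together with the fact that $\mathrm L(x)$, $\mathrm L(x^{-1})$ and $\mathrm P(x^{-1})$ all commute. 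However, the specific chain you sketch --- passing through $\mathrm P(y,x^{-1})=\mathrm P(x^{-1})\mathrm P(\mathrm P(x)y,x)\mathrm P(x^{-1})$ and then ``expanding'' --- is a detour whose endpoint is not visible from what you wrote, so if you keep this route you must prove the identity in full (or cite it precisely); otherwise the paper's preliminary rewriting of $u$ is an elegant way to sidestep differentiating $\mathrm P(y)$ and the triple-product machinery altogether.
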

\begin{proof} From the identity of Proposition \ref{magic-identity}, $ u=(y^{-1}-(y+x^{-1})^{-1})^{-1}$. Using the properties of Proposition \ref{propF1} and properties of determinant of block matrices one obtains
\begin{align*} 
\vert \det(J(x,y))\vert &=\vert\det\begin{pmatrix}
   \mathrm P(u)\mathrm P(v^{-1})\mathrm P(x^{-1})  & \mathrm P(u)(\mathrm P(y^{-1})-\mathrm P(v^{-1})) \\
    -\mathrm P(x^{-1}) & \id_{G}
\end{pmatrix}\vert\\
&=\vert\det(\mathrm P(u)\mathrm P(y^{-1})\mathrm P(x^{-1}))\vert\\
&=\vert\det(\mathrm P(\mathrm P(y)x+y)\mathrm P(y^{-1})\mathrm P(x^{-1}))\vert\\
&=\vert\det(\mathrm P(y)\mathrm P(x+y^{-1})\mathrm P(x^{-1}))\vert \ .
\end{align*}
The expected identity follows from Proposition \ref{proposition:forlim}.
\end{proof}
\begin{proposition}\label{first-step} Let $a,b,\nu_1,\nu_2\in E_+$. Let $X$ and $w$ be two independent random variables on $E_+$ such that  the law of $X$ and $w$ are respectively $\mathrm{GIG}_{E_+}\left( p ; a^{-1}, \nu_1 \right)$, and $\mathrm{GIG}_{E_+}\left( p ; \nu_1, \nu_2 \right)$.
Then the law of $\mathrm P(w)(X)+w$ given $\mathrm P(w+X^{-1})(a)=b$
is $\mathrm{GIG}_{E_+}\left( p ; b^{-1}, \nu_2 \right)$.
\end{proposition}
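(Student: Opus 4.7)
The plan is to compute the joint density of $(U, V) := (\mathrm P(w)(X) + w, w + X^{-1})$ with respect to $\mu \otimes \mu$ by change of variables, factor the result, and read off the conditional law of $U$ given $V$. Because the factorization will end up depending on $v$ only through $b := \mathrm P(v)(a)$, the conditional law of $U$ given the coarser event $\{\mathrm P(V)(a) = b\}$ will automatically coincide with the conditional law of $U$ given $V$, so no uniqueness argument for the equation $\mathrm P(v)(a) = b$ is required.

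First I would write the joint density of $(X, w)$ with respect to $\mu \otimes \mu$ as the product of the two $\mathrm{GIG}$ densities, and perform the change of variables $(x,w)\mapsto(u,v)$; by Lemma \ref{cov} and Proposition \ref{III4-2}, the Lebesgue Jacobian equals $\det(\mathrm P(v)) = \det(v)^{2\dim E/r}$. The next task is to invert the transformation. The magic identity (Proposition \ref{magic-identity}) applied with $(a,b) \leftarrow (w, x^{-1})$ produces $w^{-1} = u^{-1} + v^{-1}$. For $x$, I plan to invoke the same identity with $(a,b) \leftarrow (v^{-1}, u^{-1})$, which gives $v - w = (v^{-1} + \mathrm P(v^{-1})(u))^{-1}$, and then to use $v^{-1} = \mathrm P(v^{-1})(v)$ (Proposition \ref{propF1}(2) applied to $v^{-1}$) together with linearity of $\mathrm P(v^{-1})$ to rewrite $v^{-1} + \mathrm P(v^{-1})(u) = \mathrm P(v^{-1})(u+v)$, whence $x = \mathrm P(v^{-1})(u+v)$ by Proposition \ref{propF1}(4). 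Propositions \ref{proposition:forlim} and \ref{III4-2} then deliver the determinant identities $\det(w) = \det(u)\det(v)/\det(u+v)$ and $\det(x) = \det(u+v)/\det(v)^2$.

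Substituting everything into the transformed density, the $\det(u+v)$ factors will cancel, leaving a density proportional to $\det(u)^p \det(v)^{-p} \exp(-\tfrac{1}{2}[\cdots])$ with respect to $\mu \otimes \mu$. In the exponent, the self-adjointness of $\mathrm P(v^{-1})$ combined with Proposition \ref{propF1}(4) gives $\langle a^{-1}, x\rangle = \langle \mathrm P(v^{-1})(a^{-1}), u+v\rangle = \langle b^{-1}, u\rangle + \langle b^{-1}, v\rangle$; the contributions $\langle \nu_1, x^{-1}\rangle + \langle \nu_1, w\rangle$ telescope to $\langle \nu_1, v\rangle$ since $v = w + x^{-1}$; and $\langle \nu_2, w^{-1}\rangle$ splits as $\langle \nu_2, u^{-1}\rangle + \langle \nu_2, v^{-1}\rangle$. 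The exponent thus separates cleanly as $[\langle b^{-1}, u\rangle + \langle \nu_2, u^{-1}\rangle] + [\langle b^{-1}+\nu_1, v\rangle + \langle \nu_2, v^{-1}\rangle]$, and the $u$-dependent factor of the joint density is precisely the density of $\mathrm{GIG}_{E_+}(p; b^{-1}, \nu_2)$, yielding the claim.

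The hard part will be the Jordan-algebraic identity $x = \mathrm P(v^{-1})(u+v)$: without an associative product the matrix-case simplification $(u^{-1}+v^{-1})^{-1} = u(u+v)^{-1}v$ is unavailable, and the derivation must be routed entirely through the magic identity, the equation $\mathrm P(v^{-1})(v) = v^{-1}$, and Proposition \ref{propF1}(4).
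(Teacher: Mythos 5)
Your proposal is correct and takes essentially the same route as the paper's proof: the same change of variables $(x,w)\mapsto(u,v)$ with Jacobian supplied by Lemma \ref{cov}, the same inversion via Proposition \ref{magic-identity} (your $x=\mathrm P(v^{-1})(u+v)$ is exactly the paper's $x=\mathrm P(v^{-1})(u)+v^{-1}$, since $\mathrm P(v^{-1})(v)=v^{-1}$), and the same factorization of the joint density into a $\mathrm{GIG}_{E_+}(p;(\mathrm P(v)a)^{-1},\nu_2)$ part in $u$ times a function of $v$. You merely spell out the determinant bookkeeping and the conditioning step that the paper compresses into ``one obtains easily''.
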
 
\begin{proof} Let $U=\mathrm P(w)(X)+w$ and $V=w+X^{-1}$. Using the change of variable of lemma \ref{cov}, i.e. $y=(u^{-1}+v^{-1})^{-1}$, $x=\mathrm P(v^{-1})(u)+v^{-1}$, and Proposition \ref{magic-identity}, one obtains easily 
that the density $f_{U,V}$ of $(U,V)$ with respect to $\mu\otimes \mu$ is defined by 
$$f_{U,V}(u,v)=\det(u)^p\exp(-\frac{1}{2}(\langle (\mathrm P(v)a)^{-1},u\rangle+\langle \nu_2,u^{-1}\rangle))\mbox{fct}(v),$$ 
$u,v\in E_+$, which gives the proposition.
\end{proof}
Let $\mathcal P$ be the Markov kernel of the Markov chain $( \Lambda_n,L_n)_{n \ge 0}$ defined in definition \ref{maindefi} and $\mathcal Q$ be the Markov kernel on $E_+\times E_+$ such that for $a\in E_+$, $\mathcal Q(a,.)$ is the law of $P( w + X^{-1})(a)$ where the $w$ is distributed according to $\mathrm{GIG}_{E_+}\left( p ; e, e \right)$ and $X$ is distributed according to $\mathrm{GIG}_{E_+}\left( p ; a^{-1}, e \right)$.
Now $\mathcal K$ is the Markov kernel from $E_+$ to $E_+\times E_+$ such that for all $a\in E_+$, 
$$
   \mathcal K(a, dx\, dy)=  \delta_a(dx) \ \mathrm{GIG}_{E_+}\left( p ; a^{-1}, e \right) (dy) \ .
$$ 
The following Theorem takes the historical point of view of Rogers--Pitman of intertwining \cite{rogers1981markov}. It implies in particular the Main Theorem \ref{thm:main_discrete}. The Markov kernel $\Phi$ from $E_+\times E_+$ to $E_+$ is defined by 
$$ \Phi(f) =f\circ \phi \ ,$$
for all bounded measurable real function $f$ on $E_+$, where $\phi(\lambda,x)=\lambda$.

\begin{thm}[Intertwining of Markov kernels and Markov property]
\label{thm:intertwining} 
We have \begin{enumerate}
\item $\mathcal K\circ \Phi=I$
\item 
$ \mathcal K \circ\mathcal P= \mathcal Q \circ\mathcal K$
\end{enumerate} 
\end{thm}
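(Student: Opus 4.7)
The plan is to verify both identities directly from the definitions, with all the Jordan-algebraic heavy lifting already packaged into Proposition \ref{first-step}. Throughout, I adopt the convention that for kernels $K_1, K_2$, the composition $K_1 \circ K_2$ denotes $(K_1 \circ K_2)(x, A) = \int K_1(x, dy)\, K_2(y, A)$.

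\textbf{Part (1)} is a formal check. Since $\Phi$ is encoded by the kernel $\Phi((x,y), dz) = \delta_{\phi(x,y)}(dz) = \delta_x(dz)$ and $\mathcal K(a, dx\, dy) = \delta_a(dx)\otimes \mathrm{GIG}_{E_+}(p; a^{-1}, e)(dy)$, one computes
$$
(\mathcal K \circ \Phi)(a, dz) = \int \delta_a(dx)\, \mathrm{GIG}_{E_+}(p; a^{-1}, e)(dy)\, \delta_x(dz) = \delta_a(dz),
$$
so $\mathcal K \circ \Phi = I$.

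\textbf{Part (2)} is the substantive point. Fix $a \in E_+$ and a bounded measurable $f : E_+ \times E_+ \to \mathbb R$. Unfolding $\mathcal K \circ \mathcal P$, the $x$-coordinate of $\mathcal K(a, \cdot)$ is frozen at $a$ and $\mathcal P$ is the one-step kernel of $(\Lambda_n, L_n)$, so
$$
(\mathcal K \circ \mathcal P) f(a) = \mathbb E\bigl[\, f\bigl(\mathrm P(w + Y^{-1})(a),\ \mathrm P(w)(Y) + w\bigr)\bigr],
$$
where $Y \sim \mathrm{GIG}_{E_+}(p; a^{-1}, e)$ and $w \sim \mathrm{GIG}_{E_+}(p; e, e)$ are independent. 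On the other side, unfolding $\mathcal Q \circ \mathcal K$ and using that $\mathcal K(b, \cdot)$ freezes its first coordinate at $b$,
$$
(\mathcal Q \circ \mathcal K) f(a) = \mathbb E\bigl[\, f(B, V_B)\bigr],
$$
where $B := \mathrm P(w + X^{-1})(a)$ has law $\mathcal Q(a, \cdot)$ with $X \sim \mathrm{GIG}_{E_+}(p; a^{-1}, e)$ and $w \sim \mathrm{GIG}_{E_+}(p; e, e)$ independent, and conditionally on $B = b$ the variable $V_b$ is drawn from $\mathrm{GIG}_{E_+}(p; b^{-1}, e)$.

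Since $X$ and $Y$ have identical laws and identical independence relations with $w$, I identify them and set $U := \mathrm P(w)(X) + w$. Then the left-hand side equals $\mathbb E[f(B, U)]$. Applying Proposition \ref{first-step} with $\nu_1 = \nu_2 = e$ shows that the conditional law of $U$ given $B = b$ is precisely $\mathrm{GIG}_{E_+}(p; b^{-1}, e)$, so the joint laws of $(B, U)$ and $(B, V_B)$ coincide. This yields $\mathcal K \circ \mathcal P = \mathcal Q \circ \mathcal K$, and the Matsumoto--Yor Markov property of Theorem \ref{thm:main_discrete} follows from the Rogers--Pitman criterion \cite{rogers1981markov}: the relation $\mathcal K \circ \Phi = I$ certifies $\mathcal Q$ as the Markov kernel of $\Lambda$, and the conditional law of $L_n$ given $(\Lambda_k)_{k \leq n}$ is read off from $\mathcal K$. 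The genuine obstacle has already been cleared in Proposition \ref{first-step}, whose proof rests on the Jacobian computation of Lemma \ref{cov} together with the magic identity of Proposition \ref{magic-identity}; once it is in hand, the intertwining is pure bookkeeping.
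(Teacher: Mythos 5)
Your proof is correct and follows essentially the same route as the paper: both sides of the intertwining are unfolded to the joint law of $\bigl(\mathrm P(w+X^{-1})(a),\ \mathrm P(w)(X)+w\bigr)$, and Proposition \ref{first-step} with $\nu_1=\nu_2=e$ identifies the conditional law of the second coordinate given the first as $\mathrm{GIG}_{E_+}(p;b^{-1},e)$, which is exactly what $\mathcal Q\circ\mathcal K$ prescribes. The paper's proof is a one-line version of this argument; your write-up merely spells out the bookkeeping (including the trivial verification of $\mathcal K\circ\Phi=I$, which the paper omits).
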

\begin{proof} Proposition \ref{first-step} with $\nu_1=\nu_2=e$ implies that for $a\in E_+$, $\mathcal K\circ \mathcal P(a,\cdot)$ and $\mathcal Q\circ\mathcal K(a.\cdot)$ are both the law of $(\mathrm P( w + X^{-1})(a),\mathrm P(w)(X)+w)$ where $w$ and $X$ are respectively distributed according to $\mathrm{GIG}_{E_+}\left( p ; e, e \right)$ and $\mathrm{GIG}_{E_+}\left( p ; a^{-1}, e \right)$.
\end{proof}

This last theorem proves Theorem \ref{thm:main_discrete}. Indeed, according to \cite[Theorem 2]{rogers1981markov} it follows from the previous theorem and the fact that $w_1$ is distributed according to $\mathcal K(e,(e,dx))$. While the intertwining property classically implies the Markov property, it is in fact equivalent to the conditional distribution property, as made explicit in \cite[Theorems 6.7.3 and 6.7.4]{chhaibi2013littelmann}.

\subsection{Proof of Theorem \ref{thm:dufresne_discrete}: The Dufresne identity} 
The integral process $\left( I_n \ ; \ n \ge 0 \right)$ is defined in definition \ref{defiint}. We now write the process according to the point of view of Chamayou and Letac \cite{chamayou1999additive}. 
We define the random fonction $F_n$ on $E_+$ by 
$$F_n(x)=\mathrm P(w^{-1}_n)(x)+w^{-1}_n, \, x\in E_+ \ .$$
For $n\ge 1$, we let $I_n(x)=F_1\circ\dots\circ F_n(x)$ for $x\in E_+$, so that 
\begin{align}
   I_n(x) 
& = \mathrm P(w^{-1}_1)\dots \mathrm P(w^{-1}_n)(x)+\sum_{k=0}^{n-1}P(w_1^{-1})\dots \mathrm P(w_k^{-1})(w_{k+1}^{-1}) \nonumber\\
& = \mathrm P(w^{-1}_1)\dots \mathrm P(w^{-1}_n)(x)+I_n \ .
\label{eq:InX_to_In}
\end{align}
The process $Y = \left( Y_n := F_n\circ\dots\circ F_1(x) \ ; \ n\ge 0 \right)$ is a Markov process starting from $x\in E_+$. The following proposition is an immediate consequence of \cite[Theorem 3.1]{letac2000indepedence} and the identity of Proposition \ref{magic-identity}. For the convenience of the reader we provide a complete proof. 

\begin{proposition}
\label{proposition:mes-inv} 
Suppose that $p>\dim E/r-1$. Then the inverse Wishart distribution $\mathrm{Inv}_{\#}(\gamma_{p,e})$ is an invariant probability measure of $Y$.
\end{proposition}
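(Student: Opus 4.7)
The plan is to show the one-step invariance: if $X \sim \mathrm{Inv}_{\#}(\gamma_{p,e})$ and $w \sim \mathrm{GIG}_{E_+}(p;e,e)$ are independent, then $U := \mathrm{P}(w^{-1})(X) + w^{-1}$ has the same law as $X$. First I would substitute $y := w^{-1}$, whose law is $\mathrm{GIG}_{E_+}(-p;e,e)$, and introduce the auxiliary variable $V := y + X^{-1}$ alongside $U = \mathrm{P}(y)(X) + y$. This is exactly the change of variables $(x,y) \mapsto (u,v)$ of Lemma~\ref{cov}, whose Jacobian modulus equals $\det(v)^{2\dim E / r}$ by Proposition~\ref{III4-2}. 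The magic identity of Proposition~\ref{magic-identity}, applied with $a = y$, $b = x^{-1}$, inverts the map as
\[
  y = (u^{-1} + v^{-1})^{-1}, \qquad x = \mathrm{P}(v^{-1})(u + v).
\]

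Next, I would compute the joint density of $(U,V)$ with respect to $\mu \otimes \mu$ and show it factorizes. For the polynomial prefactor, Proposition~\ref{III4-2} gives $\det(x) = \det(v)^{-2}\det(u+v)$, and Proposition~\ref{proposition:forlim} combined with $y^{-1} = u^{-1} + v^{-1}$ gives $\det(y) = \det(u)\det(v)/\det(u+v)$. Consequently $\det(x)\det(y)\det(v) = \det(u)$, which precisely absorbs the Jacobian when passing to $\mu$-densities, and the prefactor collapses to $\det(u)^{-p}\det(v)^{p}$. For the exponent, the tautology $x^{-1} = v - y$ (immediate from $v = y + x^{-1}$) yields $\langle e, x^{-1}\rangle + \langle e, y\rangle = \langle e, v\rangle$, while $\langle e, y^{-1}\rangle = \langle e, u^{-1}\rangle + \langle e, v^{-1}\rangle$ by linearity. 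The exponent thus separates cleanly into $u$- and $v$-parts.

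Putting it together, the joint density factorizes as
\[
  f_{U,V}(u,v) \;\propto\; \bigl[\det(u)^{-p} e^{-\frac{1}{2}\langle e, u^{-1}\rangle}\bigr] \cdot \bigl[\det(v)^{p} e^{-\frac{1}{2}(\langle e, v\rangle + \langle e, v^{-1}\rangle)}\bigr],
\]
revealing the stronger fact that $U$ and $V$ are independent, with $U \sim \mathrm{Inv}_{\#}(\gamma_{p,e})$ and $V \sim \mathrm{GIG}_{E_+}(p;e,e)$, in the Lukacs--Olkin--Rubin flavour behind Letac's Theorem 3.1. The marginal law of $U$ being the target inverse Wishart, invariance of $Y$ follows. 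The main obstacle is purely algebraic: keeping track of the interplay between Propositions~\ref{propF1}, \ref{III4-2}, \ref{proposition:forlim} and \ref{magic-identity} so that both the Jacobian and the exponent cancel at once. Once the identification $x^{-1} = v - y$ is spotted and combined with the magic identity, the remaining bookkeeping is routine.
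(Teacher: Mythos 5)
Your proposal is correct and follows essentially the same route as the paper: the paper's proof also sets $U=\mathrm P(w_1^{-1})(X)+w_1^{-1}$, $V=w_1^{-1}+X^{-1}$, applies the change of variables of Lemma~\ref{cov} together with Proposition~\ref{magic-identity}, and reads off the factorized density $\det(u)^{-p}e^{-\frac12\langle e,u^{-1}\rangle}F(v)$. Your version simply spells out the determinant and exponent bookkeeping (all of which checks out, including the identity $\det(x)\det(y)\det(v)=\det(u)$ cancelling the Jacobian) and makes explicit the independence of $U$ and $V$, which the paper leaves implicit.
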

\begin{proof} 
Let $X$ be a random variable independent of $w_1$ and distributed according to the inverse Wishart distribution $\mathrm{Inv}_{\#}( \gamma_{p,e} )$.
We let $$U=\mathrm P(w_1^{-1})(X)+w_1^{-1} \textrm{ and } V=w_1^{-1}+X^{-1}\ .$$ As in Proposition \ref{first-step} one obtains the density $f_{U,V}$ of $(U,V)$ with respect to $\mu\otimes \mu$ as
$$ f_{U,V}(u,v)
   = \det(u)^{-p} \exp\left( -\frac{1}{2}\langle e,u^{-1}\rangle \right)
   F(v) \ ,
$$
for $u,v\in E_+$ and for a certain function $F$. This completes the proof of the proposition.
\end{proof}

By Theorem 2.4 of \cite{bougerol1992strict}, the existence of a stationary measure implies the convergence of the sequence $(I_n \ ; \ n\ge 0)$. We give a short proof of this fact which is available in the particular framework symmetric cones.
\begin{proposition} 
Suppose that $p>\dim E/r-1$. Then the series $$\sum_{k=0}^\infty \mathrm P(w^{-1}_1)\dots \mathrm P(w^{-1}_{k})(w^{-1}_{k+1})$$ is finite almost surely and distributed according to an inverse Wishart distribution $\mathrm{Inv}_{\#}( \gamma_{p,e} )$.
\end{proposition}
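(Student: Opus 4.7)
The natural strategy is to couple the chain started at $0$ with the chain started under the invariant law $\pi := \mathrm{Inv}_{\#}(\gamma_{p,e})$, to exploit the cone monotonicity of $(I_n)$ in order to extract almost sure convergence, and then to identify the limit law by a Chamayou--Letac-type argument.

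Concretely, I would pick $X_\infty \sim \pi$ independent of $(w_n)_{n\ge1}$. By Proposition \ref{proposition:mes-inv}, $\pi$ is invariant for the chain $Y_n = F_n\circ\cdots\circ F_1$; by exchangeability of the i.i.d.\ sequence one has $F_1\circ\cdots\circ F_n(X_\infty) \stackrel{d}{=} F_n\circ\cdots\circ F_1(X_\infty)\sim \pi$. The decomposition \eqref{eq:InX_to_In} then reads
\[
I_n(X_\infty) \;=\; \Prm(w_1^{-1})\cdots \Prm(w_n^{-1})(X_\infty) \;+\; I_n,
\]
with both summands lying in $\bar E_+$ since the quadratic representation preserves the cone. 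Hence $I_n\le I_n(X_\infty)$ in the cone order, so the sequence $(I_n)_n$ is stochastically bounded; combined with the monotonicity of $(I_n)_n$ (its increments $\Prm(w_1^{-1})\cdots \Prm(w_n^{-1})(w_{n+1}^{-1})$ all lie in $\bar E_+$), this yields almost sure convergence $I_n\to I_\infty$ to a finite element $I_\infty\in\bar E_+$.

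The remaining task is to identify the law of $I_\infty$. The plan is to prove that $\varepsilon_n := \Prm(w_1^{-1})\cdots \Prm(w_n^{-1})(X_\infty)\to 0$ almost surely; once this is established, passing to the limit in $I_n(X_\infty)=\varepsilon_n+I_n$ gives $I_n(X_\infty)\to I_\infty$ almost surely, and since $I_n(X_\infty)\sim\pi$ for every $n$, the almost sure limit inherits this constant distribution, giving $I_\infty\sim\pi$.

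The main obstacle is this contraction claim $\varepsilon_n\to 0$. Iterating Proposition \ref{III4-2} yields $\det(\varepsilon_n) = \det(X_\infty)\prod_{i=1}^n \det(w_i)^{-2}$; under the assumption $p>\dim E/r-1$, a strong law of large numbers together with the self-dual symmetry of $\mathrm{GIG}_{E_+}(p;e,e)$ (exchanged with $\mathrm{GIG}_{E_+}(-p;e,e)$ by inversion) should give $\mathbb E[\log\det(w_1)]>0$ and hence $\det(\varepsilon_n)\to 0$ a.s. The passage from vanishing determinant to cone-convergence $\varepsilon_n\to 0$ is the delicate point; it may be handled by a Lyapunov-exponent estimate on the random operator product $\Prm(w_1^{-1})\cdots \Prm(w_n^{-1})$, or bypassed by showing directly that $\mu_\infty := \mathrm{law}(I_\infty)$ is invariant for $Y$ via the recursion $I_{n+1} = w_1^{-1}+\Prm(w_1^{-1})(\widetilde I_n)$ with $\widetilde I_n\stackrel{d}{=}I_n$ independent of $w_1$, then invoking uniqueness of the invariant law of contractive affine iterations on symmetric cones in the spirit of Bougerol--Picard \cite{bougerol1992strict}, using the Laplace-transform inequality $\mathbb E[e^{-\langle s,I_\infty\rangle}]\ge \mathbb E[e^{-\langle s,\pi\rangle}]$ obtained by monotone convergence from $I_n\le I_n(X_\infty)$.
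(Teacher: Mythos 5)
Your first step (almost sure convergence) is correct and coincides with the paper's own argument: dominate $I_n$ by $I_n(X_\infty)$ in the cone order via the decomposition \eqref{eq:InX_to_In}, observe that $I_n(X_\infty)\eqlaw X_\infty\sim\pi:=\mathrm{Inv}_{\#}(\gamma_{p,e})$ by Proposition \ref{proposition:mes-inv} and exchangeability, and combine this tightness with the monotonicity of $(I_n)$ (the paper phrases the domination through $\Tr$, which is equivalent).

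The identification of the limit law, however, is left with a genuine gap that you flag yourself but do not close. Knowing $\det(\varepsilon_n)\to 0$ (which does follow from Proposition \ref{III4-2} and $\E\log\det w_1>0$) does not give $\varepsilon_n\to 0$ in the cone, and neither of your two remedies is carried out. The Lyapunov-exponent estimate on the products $\Prm(w_1^{-1})\cdots\Prm(w_n^{-1})$ is precisely the nontrivial point. As for the bypass: the fixed-point identity $I_\infty=w_1^{-1}+\Prm(w_1^{-1})(\widetilde I_\infty)$ does show that $\mu_\infty$ is invariant for $Y$, but the inequality $\Lc_{\mu_\infty}(s)\ge\Lc_{\pi}(s)$ on the dual cone does not by itself force $\mu_\infty=\pi$ (two distinct probability laws, even two invariant ones, can a priori be comparable in this order), and invoking the uniqueness statement of \cite{bougerol1992strict} requires verifying negativity of the top Lyapunov exponent and an irreducibility condition --- i.e.\ exactly the contraction you were trying to avoid. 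The paper closes this gap with a short self-contained trick you are missing: since the series converges a.s., its general term $\Prm(w_1^{-1})\cdots\Prm(w_n^{-1})(w_{n+1}^{-1})$ tends to $0$ a.s.; replacing $w_{n+1}$ by an independent copy $w_0$ (which has the same joint law with $(w_1,\dots,w_n)$ for each fixed $n$) gives convergence to $0$ in probability, hence a.s.\ along a deterministic subsequence $\varphi(n)$; Fubini then yields $\Prm(w_1^{-1})\cdots\Prm(w_{\varphi(n)}^{-1})(x)\to 0$ a.s.\ for almost every $x$, in particular for $x=X_\infty$, so that $I_{\varphi(n)}(X_\infty)\to I_\infty$ while $I_{\varphi(n)}(X_\infty)\eqlaw X_\infty$, giving $I_\infty\sim\pi$. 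Adding this observation (or a full verification of the hypotheses of \cite{bougerol1992strict}) is needed to complete your proof.
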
 
\begin{proof} 
{\bf Step 1: Almost sure convergence.}

Notice that if $(a_n \ ; \ n \in \N )$ is a sequence of elements in $E_+$ then the series $\sum a_n$ converges absolutely in $E_+$ if and only if the series $\sum \Tr(a_n)$ converges absolutely in $\R_+$, which in turn holds if and only if $\sum \Tr(a_n)$ is bounded. Indeed the second equivalence holds because each term is positive. As for the first equivalence, the spectral decomposition implies that for $x \in E_+$, $\Tr x = \sum_i \lambda_i$ and $\|x\| = \sqrt{\sum_i \lambda_i^2}$ where the $\lambda_i$'s are positive eigenvalues. As such the first equivalence is basically a consequence of the equivalence between the $1$-norm and the $2$-norm in finite dimensions. In any case, we have
\begin{align}
    \label{eq:cv_iff}
    (I_n \ ; \ n \geq 1) \textrm{ converges}
    \textrm{ iff }
    (\Tr(I_n) \ ; \ n \geq 1 ) \textrm{ bounded } \ .
\end{align}

Now, let $X$ be a random variable independent from $(w_n)_{n\ge 1}$ distributed according to the inverse Wishart distribution $\mathrm{Inv}_{\#}( \gamma_{p,e} )$ on $E_+$. Thanks to Eq. \eqref{eq:InX_to_In}, we have
$$
    \Tr\left( I_n(X) \right) \geq \Tr \left( I_n \right) \ .
$$
By combining this fact with Eq. \eqref{eq:cv_iff}, we have
\begin{align*}
    \P\left( (I_n \ ; \ n \geq 1) \textrm{ converges} \right)
& = \P\left( (\Tr(I_n) \ ; \ n \geq 1 ) \textrm{ is bounded} \right)\\
& \geq \P\left( (\Tr(I_n(X)) \ ; \ n \geq 1 ) \textrm{ is bounded} \right)\\
& = \P(\cup_m\cap_n( \Tr(I_n(X) ) <m))\\
& = \sup_m \inf_n\P( \Tr(I_n(X)) <m) \\
& = \sup_m \P( \Tr(X) <m) = 1 \ .
\end{align*}
The penultimate equality comes from Proposition \ref{proposition:mes-inv} and from the equality in law of $I_n(X)$ and the stationary $Y_n$ (when $Y$ starts from $X$).

\medskip

{\bf Step 2: Identification of the limiting law.}
We have already proved that the series converges almost surely, which implies that $$ \mathrm P(w^{-1}_1)\dots \mathrm P(w^{-1}_{n})(w^{-1}_{n+1})$$ goes to $0$ almost surely when $n$ goes to infinity. Let $w_0$ be distributed as $w_{n+1}$ and independent from $(w_n)_{n\ge 1} $. Then $\left( \mathrm P(w^{-1}_1)\dots \mathrm P(w^{-1}_{ n})(w^{-1}_{0}) \ ; \ n \in \N \right)$ converges in probability to $0$. One chooses a deterministic subsequence such that $\mathrm P(w^{-1}_1)\dots \mathrm P(w^{-1}_{\varphi(n)})(w^{-1}_{0})$ which goes to $0$ almost surely. As $w_0$ is independent from $(w_n)_{n\ge 1} $, for almost every $x\in E_+$, $\mathrm P(w^{-1}_1)\dots \mathrm P(w^{-1}_{\varphi(n)})(x)$ goes to $0$ almost surely when $n$ goes to infinity. 

Now, thanks to Eq. \eqref{eq:InX_to_In}, the limits of $I_{\varphi(n)}$ and $I_{\varphi(n)}(X)$ coincide almost surely. Since $I_{\varphi(n)}(X) \eqlaw X$ using the same argument as the end of Step 1, we are done with the proof.
\end{proof}

\section{Proofs in the continuous time setting }
\label{section:continuous_time}

As announced, we invoke invariance principles from \cite{stroock1973limit} to deduce Theorems \ref{thm:main_continuous} and \ref{thm:dufresne_continuous} from Theorems \ref{thm:main_discrete} and \ref{thm:dufresne_discrete}. Indeed, the group $G$ is a connected Lie subgroup of $GL(E)$ so that the results of \cite{stroock1973limit} are available here. 

We organize this section as follows.
\begin{itemize}
    \item In Subsection \ref{subsection:algebraic_structureGK}, we start by prerequisites on the algebraic structure of Jordan algebras.
    \item In Subsection \ref{subsection:discrete_approximations}, we introduce discrete time processes, following the framework developed in the previous Section \ref{section:discrete_time}.
    \item Only then we prove in Subsection \ref{subsection:scaling_limit} the scaling limit to continuous time Markov processes, among which a left-invariant diffusion on $G$, driving the other processes.
    \item We deduce in Subsection \ref{subsection:proofs_theo_cont_1} Theorems \ref{thm:main_continuous} and \ref{thm:dufresne_continuous} for an intermediate Brownian motion on $G$ driven by a Brownian motion on $\mathfrak p$ from their discrete time versions.
    \item Finally we deduce in Subsection \ref{subsection:proofs_theo_cont_2} Theorems \ref{thm:main_continuous} and \ref{thm:dufresne_continuous}.
\end{itemize}


\subsection{Algebraic structure related to a Jordan algebra} 
\label{subsection:algebraic_structureGK}

Let $\gfrak$ be the Lie algebra of $G$. 
Notice that $G \subset \mathrm{Aut}(E)$ and $\gfrak \subset \mathrm{End}(E)$. 
If we write $K = G\cap O(E)$, where $O(E)$ is the orthogonal group with respect to the inner product $\langle \cdot,\cdot\rangle$ on $E$ then it is known that 
$$ K = \left\{ g\in G \ ; \ g(e) = e \right\} \ . $$
Moreover, $G / K$ is a homogeneous symmetric space and the mapping $gK \mapsto g(e)$ identifies the symmetric space $G / K$ with the cone $E_+$. 

The Lie algebra $\gfrak$ has a Cartan decomposition which is described in \cite[Chapter I, Paragraph 8]{9ddeeb4c-7522-3145-b466-0c73883a786a}. 
One has $\gfrak=\kfrak\oplus \mathfrak p$ where $\kfrak$ is the Lie algebra of $K$ given by 
$$ \kfrak := \left\{ X\in \mathfrak{g} \ ; \ X(e) = 0 \right\} \ ,$$
and 
$$ \mathfrak p := \left\{ \mathrm L(x) \ ; \ x \in E \right\} \ .$$ 
We fix a complete system $c_1, \dots , c_r$ of primitive orthogonal idempotents in $E$. The maximal abelian subalgebra in $\mathfrak p$ is the subspace $\mathfrak a$ of $\mathfrak p$ defined by 
$$\mathfrak a = \{\mathrm L(a) \ ; \ a \in R\}$$ 
where $R=\{a=\sum_{i=1}^r a_ic_i \ ; \ a_1,\dots,a_r\in \mathbb \R\}$. The roots of $\gfrak$ are the forms $\alpha_{ij}$, $i,j\in \{1,\cdots,r\}$, $i\ne j$, defined on $\mathfrak a$ by 
$$\alpha_{ij}(\mathrm L(a))=\frac{1}{2}(a_j-a_i),$$
where $a=\sum_{i=1}^r a_i c_i$. The root space decomposition of $\gfrak$ is \cite[Prop. VI.3.3]{faraut1994analysis}
$$ \gfrak = \mathfrak m \oplus \mathfrak a \oplus_{i\ne j}  \gfrak_{ij} \ , $$
where $\gfrak_{ij}$ is the root space associated to $\alpha_{ij}$ and 
$$\mathfrak m=\{X\in \kfrak \ ; \ X(a)=0, \ a\in R\} \ .$$ 
The dimension of $\gfrak_{ij}$ does not depend on $i,j$. It is denoted by $d$ and called the degree of the Jordan algebra $E$. One has $\dim E=r+dr(r-1)/2$. We equip $\mathfrak p$ with an inner product $\langle \cdot,\cdot\rangle$ defined by $\langle \mathrm L(x),\mathrm L(y)\rangle=\langle x,y\rangle$, $x,y\in E$. It coincides up to a scaling factor with the Killing form on $\mathfrak p'=\{\mathrm L(x)\in \mathfrak p \ ; \ \Tr(x)=0\}$. Thus the vector $H_\rho$ which represents the Weyl vector in $\mathfrak a$ is defined by 
$$ H_\rho = \frac{d}{4}\sum_{i=1}^r(2i-r-1)L(c_i) \ .$$
 
\subsection{Discrete time approximations of the processes}
\label{subsection:discrete_approximations}
Let $\lambda_0\in E_+ $ and $\ell_0\in E_+ $.
\begin{definition}
\label{maindefi-n}
Let $n$ be a positive integer and $\left(w_k^{n} \ ; \ k \ge 1\right)$ be a sequence of i.i.d. random variables on $E_+$ distributed according to $\mathrm{GIG}_{E_+}\left( p ; ne, ne \right)$. The Markov chain $\left( L^{n}_k \ ; \ k \ge 0\right)$ and the random processes 
$\left( \Lambda^{n}_k \ ; \ k \ge 0\right)$,
$\left( I_k^n \ ; \ k\ge 0\right)$ defined by
\begin{align*}
L^{n}_{k+1}       := & \ \mathrm P(w^{n}_{k+1})(L^{n}_k)+\frac{1}{n}w^{n}_{k+1} \ , \\
\Lambda^{n}_{k+1} := & \ \mathrm P\left( w^{n}_{k+1}+\frac{1}{n}(L^{n}_k)^{-1} \right)(\Lambda^{n}_{k}) \ , \\
I_k^{n}           := & \ 
   \frac{1}{n}\sum_{i=0}^{k-1}\mathrm P((w^{n}_1)^{-1})\dots \mathrm P((w_i^n)^{-1})(w_{i+1}^n)^{-1} \ ,
\end{align*}
with starting points $L^{n}_0=\ell_0$, $\Lambda^{n}_0=\lambda_0$. 
\end{definition}
The following proposition gives other definitions of $\Lambda^n$ and $L^n$ in function of $I^n$ which are more efficient to obtain a continuous time version of Theorem \ref{thm:main_discrete}.

\begin{proposition} 
\label{proposition:chain_expressions}
For $k\ge 0$,
\begin{enumerate} 
\item $L^{n}_k=\mathrm P(w^{n}_k)\dots \mathrm P(w_1^{n})(\ell_0+I^{n}_k),$
\item $\Lambda^n_k=\mathrm P(L^n_{k})\mathrm P((w^n_{k})^{-1})\dots \mathrm P((w^n_{1})^{-1})\mathrm P(\ell_0^{-1})(\lambda_0)$.
\end{enumerate}
\end{proposition} 
\begin{proof} 
First notice that identities are true for $k=0$. Points (1) and (2) follow from the fact that the sequence 
$$(\mathrm P(w^n_k)\dots \mathrm P(w^n_1)(\ell_0+I^n_k)\ ;\ k\ge 1) $$ 
and 
$$(\mathrm P(L^n_{k})\mathrm P((w^n_{k})^{-1})\dots \mathrm P((w^n_{1})^{-1})\mathrm P(\ell_0^{-1})(\lambda_0)\ ;\ k\ge 0)$$ satisfy the same recurrence relations as $L^n$ and $\Lambda^n$. It is immediate for the first sequence. For the second, we let for $k\ge 0$, $$Y_k=\mathrm P(w_k^n)\dots \mathrm P(w_1^n)\mathrm P(\ell_0)(\lambda_0^{-1}).$$ From Proposition \ref{proposition:forlim} we obtain
\begin{align*}
\mathrm P(L^n_{k+1})(Y_{k+1}^{-1})&=\mathrm P(\mathrm P(w^n_{k+1})L^n_k+\frac{1}{n}w^n_{k+1})\mathrm P((w_{k+1}^n)^{-1})(Y_k^{-1})\\
&=\mathrm P(w^n_{k+1})\mathrm P(L_k+\frac{1}{n}(w^n_{k+1})^{-1}) (Y_k^{-1})\\ 
&=\mathrm P(w^n_{k+1}+\frac{1}{n}(L^n_k)^{-1})\mathrm P(L^n_k)(Y^{-1}_k),
\end{align*}
the last equality coming from Proposition \ref{proposition:forlim}.
Thus the sequence $$\left(\mathrm P(L^n_k)(Y^{-1}_k)\ ; \ k\ge 1\right)$$ satisfies the same recurrence relation as $\Lambda^n$, which finishes the proof.
\end{proof}
The following proposition follows from Proposition \ref{first-step}.

\begin{proposition}
\label{first-step-small-step}
Let $a\in E_+$. Let $X$ and $w$ be two independent random variables such that the law of $X$ and $w$ are respectively $\mathrm{GIG}_{E_+}\left( p ; a^{-1}, e \right)$ and $\mathrm{GIG}_{E_+}\left( p ; ne, ne \right)$. 
Then the law of $\mathrm P(w)(X)+\frac{1}{n}w$ given $\mathrm P(w+\frac{1}{n}X^{-1})(a)=b$ is $\mathrm{GIG}_{E_+}\left( p ; b^{-1}, e \right)$.
\end{proposition}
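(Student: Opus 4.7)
The plan is to reduce the statement to Proposition \ref{first-step} via a deterministic rescaling by $n$. The reduction rests on three elementary homogeneity facts, all consequences of the definitions: the Jordan determinant satisfies $\det(cx) = c^r \det(x)$ (from the spectral decomposition), the quadratic representation satisfies $\mathrm P(cx) = c^2 \mathrm P(x)$ (immediate from \eqref{eq:def_P}), and the reference measure $\mu$ of \eqref{eq:def_mu} is invariant under dilations, since the factor $|\det(cx)|^{-\dim E/r} = c^{-\dim E}|\det(x)|^{-\dim E/r}$ exactly absorbs the Jacobian $c^{\dim E}$ of the Lebesgue measure. One also uses $(cx)^{-1} = c^{-1}x^{-1}$ for $c>0$.

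I would first check, directly from the density \eqref{eq:def_GIG}, that if $w \sim \mathrm{GIG}_{E_+}(p;\, n e,\, n e)$ then $\tilde w := n w \sim \mathrm{GIG}_{E_+}(p;\, e,\, n^2 e)$. The only point to verify is that $\langle n e, w\rangle = \langle e, \tilde w\rangle$ while $\langle n e, w^{-1}\rangle = n^2 \langle e, \tilde w^{-1}\rangle$; combined with scale invariance of $\mu$ and the fact that the $\det(\cdot)^p$ prefactor and the normalizing constant absorb the remaining powers of $n$, this identifies the law of $\tilde w$. Applying the homogeneity rules to the two quantities in the statement then gives
\begin{align*}
    \mathrm P\bigl(w + \tfrac{1}{n} X^{-1}\bigr)(a)
       & = \tfrac{1}{n^2}\, \mathrm P(\tilde w + X^{-1})(a), \\
    \mathrm P(w)(X) + \tfrac{1}{n} w
       & = \tfrac{1}{n^2}\bigl(\mathrm P(\tilde w)(X) + \tilde w\bigr),
\end{align*}
so the conditioning event $\{\mathrm P(w + \tfrac{1}{n} X^{-1})(a) = b\}$ is equivalent to $\{\mathrm P(\tilde w + X^{-1})(a) = n^2 b\}$.

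Since $X$ and $\tilde w$ are independent with laws $\mathrm{GIG}_{E_+}(p; a^{-1}, e)$ and $\mathrm{GIG}_{E_+}(p; e, n^2 e)$ respectively, Proposition \ref{first-step} applied with $\nu_1 = e$ and $\nu_2 = n^2 e$ yields that, conditionally on that event, $\mathrm P(\tilde w)(X) + \tilde w \sim \mathrm{GIG}_{E_+}\bigl(p;\, \tfrac{1}{n^2}\, b^{-1},\, n^2 e\bigr)$. A final application of the scaling rule---this time used to pull the factor $1/n^2$ back out of the random variable $\mathrm P(\tilde w)(X) + \tilde w$---turns this law into $\mathrm{GIG}_{E_+}(p; b^{-1}, e)$, which is exactly the claim. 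The only delicate point is the bookkeeping of the powers of $n$ in the parameters; these are designed to cancel in a short telescoping between the dilations and the GIG parameters, and no further analytic difficulty is to be expected.
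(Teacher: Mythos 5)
Your proof is correct, and it takes the same route the paper intends: the paper simply asserts that Proposition \ref{first-step-small-step} ``follows from Proposition \ref{first-step}'', and your rescaling $\tilde w = nw$ (using $\mathrm P(cx)=c^2\mathrm P(x)$, the dilation invariance of $\mu$, and the resulting rule $cY\sim\mathrm{GIG}_{E_+}(p;A/c,cB)$ for $Y\sim\mathrm{GIG}_{E_+}(p;A,B)$) is exactly the computation that makes this reduction precise, with all powers of $n$ cancelling as claimed.
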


Proposition \ref{first-step-small-step} implies as previously the following, where $\mathcal P^{(n)}$ is the Markov transition kernel of $\left((\Lambda_k^n,L^{n}_k) \ ; \ k\ge 0\right)$ and $\mathcal K$ is defined as in Theorem \ref{thm:intertwining}.
 \begin{proposition}\label{intertwin-small-step} For $\lambda_0=\ell_0=0$, the random process $\left(\Lambda_k^n; k\ge 1\right)$ is a Markov chain on $E_+$ whose Markov kernel $\mathcal Q^{(n)}$ is defined by 
 $$\mathcal Q^{(n)}=\mathcal K\circ \mathcal P^{(n)}\circ \Phi$$ and satisfies the intertwining 
 $\mathcal Q^{(n)}\circ \mathcal K=\mathcal K\circ \mathcal P^{(n)}.$
 \end{proposition}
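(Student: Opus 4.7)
The plan is to replicate, at the small-step scale $1/n$, the intertwining argument used in the proof of Theorem \ref{thm:intertwining}, substituting Proposition \ref{first-step-small-step} for Proposition \ref{first-step}. The structure is identical: first establish the intertwining relation on kernels, then invoke the Rogers--Pitman criterion to conclude the Markov property of $(\Lambda^n_k)_{k \geq 1}$.

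For the intertwining $\mathcal{K} \circ \mathcal{P}^{(n)} = \mathcal{Q}^{(n)} \circ \mathcal{K}$, I would compute both sides directly. Fix $a \in E_+$. Unwinding the definitions, $\mathcal{K} \circ \mathcal{P}^{(n)}(a, \cdot)$ on $E_+ \times E_+$ is the joint law of
$$\bigl(\mathrm{P}(w + \tfrac{1}{n} X^{-1})(a), \, \mathrm{P}(w)(X) + \tfrac{1}{n} w\bigr),$$
where $X \sim \mathrm{GIG}_{E_+}(p; a^{-1}, e)$ and $w \sim \mathrm{GIG}_{E_+}(p; ne, ne)$ are independent. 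Proposition \ref{first-step-small-step} asserts that, conditionally on the first coordinate equalling $b$, the second coordinate is $\mathrm{GIG}_{E_+}(p; b^{-1}, e)$. This is precisely the defining structure of $\mathcal{Q}^{(n)} \circ \mathcal{K}(a, \cdot)$, where the first marginal is read off by projection. Consistency with the definition $\mathcal{Q}^{(n)} = \mathcal{K} \circ \mathcal{P}^{(n)} \circ \Phi$ is then immediate, since $\Phi$ corresponds to the projection onto the first coordinate.

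Second, I would apply the Rogers--Pitman criterion \cite[Theorem 2]{rogers1981markov}, exactly as at the end of the proof of Theorem \ref{thm:intertwining}, which, combined with the just-established intertwining, yields that the marginal process $(\Lambda^n_k ; k \geq 1)$ is a Markov chain with transition kernel $\mathcal{Q}^{(n)}$. The criterion requires that the joint law at the starting index has the $\mathcal{K}$-coupling shape, i.e., that $L^n_1$ conditioned on $\Lambda^n_1$ is $\mathrm{GIG}_{E_+}(p; (\Lambda^n_1)^{-1}, e)$; once this initial condition is in place, the intertwining propagates it to every subsequent step, so that $L^n_{k+1}$ given the past of $\Lambda^n$ is always GIG with the correct parameters, and the projected chain is Markov.

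The delicate point will be handling the degenerate initial condition $\lambda_0 = \ell_0 = 0$, which lies on the boundary of $E_+$ and makes the term $(L^n_0)^{-1}$ in the recursion formally singular. The natural interpretation, in the spirit of Proposition \ref{altdefi-small-step}, is that the chain is really indexed from $k = 1$ onward, with $(\Lambda^n_1, L^n_1)$ viewed as the honest starting pair; the main work is to verify that this pair is $\mathcal{K}$-coupled under the prescribed degeneration, either by interpreting the singular recursion as a limit along $\lambda_0, \ell_0 \to 0$ in $E_+$, or by directly checking the conditional density of $L^n_1$ given $\Lambda^n_1$ from the explicit formulas of Proposition \ref{altdefi-small-step}. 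Once this seeding is verified, the full Markov property and intertwining follow by the mechanism already spelled out in Theorem \ref{thm:intertwining} and further formalized in \cite[Theorems 6.7.3 and 6.7.4]{chhaibi2013littelmann}.
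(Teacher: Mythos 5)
Your proposal is correct and follows exactly the route the paper intends: the paper gives no separate proof for this proposition, stating only that it "follows as previously" from Proposition \ref{first-step-small-step} by the same intertwining-plus-Rogers--Pitman mechanism as Theorem \ref{thm:intertwining}, which is precisely what you spell out. Your attention to the degenerate seeding at $\lambda_0=\ell_0=0$ is a welcome addition (one checks that $\Lambda^n_1=\tfrac{1}{n^2}e$ and $L^n_1=\tfrac{1}{n}w^n_1\sim\mathrm{GIG}_{E_+}(p;n^2e,e)=\mathrm{GIG}_{E_+}(p;(\Lambda^n_1)^{-1},e)$, so the $\mathcal K$-coupling indeed holds at the first step).
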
 

\begin{proposition}\label{proposition:mes-inv-small-step} 
Suppose that $p<1-\dim E/r$. Then the inverse Wishart distribution $\mathrm{Inv}_{\#}(\gamma_{-p,e})$ is an invariant probability measure of the Markov chain $L^n$.
\end{proposition}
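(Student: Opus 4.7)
The plan is to mirror the proof of Proposition \ref{proposition:mes-inv}, with an added rescaling that absorbs the factor $\tfrac{1}{n}$. Let $X\sim \mathrm{Inv}_{\#}(\gamma_{-p,e})$ (a well-defined probability measure exactly when $p<1-\dim E/r$) be independent of $w\sim \mathrm{GIG}_{E_+}(p;ne,ne)$. It suffices to show that $U:=\mathrm P(w)(X)+\tfrac{1}{n}w$ is again distributed as $\mathrm{Inv}_{\#}(\gamma_{-p,e})$.

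First, I would introduce the rescaled variables $\tilde X := n^{2}X$ and $\tilde w := \tfrac{1}{n}w$. Since positive dilations lie in $G$ and therefore preserve $\mu$, the densities of $\tilde X$ and $\tilde w$ with respect to $\mu$ are obtained immediately by evaluating the original densities at rescaled arguments. Moreover $U=\mathrm P(\tilde w)(\tilde X)+\tilde w$, so together with $V:=\tilde w+\tilde X^{-1}$ we are in the setting of the change of variables $(x,y)\mapsto(u,v)=(\mathrm P(y)(x)+y,\,y+x^{-1})$ of Lemma \ref{cov}.

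Next I would compute the joint density of $(U,V)$ with respect to $\mu\otimes\mu$. A clean fact (implicit in the proof of Proposition \ref{first-step}) is that $\det(\tilde x)\det(\tilde y) = \det(u)/\det(v)$, from which one verifies that the change of variables of Lemma \ref{cov} preserves $\mu\otimes\mu$ without any extraneous Jacobian factor. Rewriting the exponent via $\tilde x^{-1}=\mathrm P(v)((u+v)^{-1})$ and $(u^{-1}+v^{-1})^{-1}=v-\mathrm P(v)((u+v)^{-1})$, both of which follow from Propositions \ref{proposition:forlim} and \ref{magic-identity}, I expect a miraculous cancellation: the two occurrences of $n^{2}\langle v^{2},(u+v)^{-1}\rangle$, arising from $\langle n^{2}e,\tilde x^{-1}\rangle$ and from the expansion of $\langle n^{2}e,\tilde w\rangle$, annihilate each other. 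What remains factorizes as
\[
\tilde g(u,v)\;\propto\;\det(u)^{p}\,e^{-\frac{1}{2}\langle e,u^{-1}\rangle}\;\cdot\;\det(v)^{-p}\,e^{-\frac{1}{2}(n^{2}\langle e,v\rangle+\langle e,v^{-1}\rangle)},
\]
so the marginal of $U$ is $\mathrm{Inv}_{\#}(\gamma_{-p,e})$ and, as a bonus, $U$ and $V$ are independent with $V\sim \mathrm{GIG}_{E_+}(-p;n^{2}e,e)$.

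The main technical hurdle will be keeping the Jacobians straight when passing between Lebesgue measure and the $G$-invariant measure $\mu$, and verifying carefully that the $\langle v^{2},(u+v)^{-1}\rangle$ cross-terms really do cancel; once those are checked, the stationarity follows at once from reading off the marginal of $U$.
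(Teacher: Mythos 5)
Your proof is correct and follows essentially the route the paper intends: the paper states Proposition \ref{proposition:mes-inv-small-step} without proof, but the natural argument is exactly the one you give, namely the change of variables of Lemma \ref{cov} applied as in Proposition \ref{proposition:mes-inv}, with the rescaling $\tilde X = n^2 X$, $\tilde w = \tfrac1n w$ absorbing the $\tfrac1n$ and converting $\mathrm{GIG}_{E_+}(p;ne,ne)$ into $\mathrm{GIG}_{E_+}(p;n^2e,e)$. Your auxiliary identities check out (in particular $\tilde x^{-1}=\mathrm P(v)((u+v)^{-1})$ follows from Propositions \ref{magic-identity} and \ref{propF1}, the cross-terms do cancel by self-adjointness of $\mathrm P(v)$ and $\mathrm P(v)(e)=v\cdot v$, and $\det(x)\det(y)=\det(u)/\det(v)$ makes the map $\mu\otimes\mu$-preserving), so the marginal of $U$ is indeed $\mathrm{Inv}_{\#}(\gamma_{-p,e})$ under the hypothesis $p<1-\dim E/r$, which is precisely what makes $\gamma_{-p,e}$ normalizable.
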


\subsection{Scaling limit to continuous-time processes}
\label{subsection:scaling_limit}

For a differentiable function $f$ on $G$ and $X\in \gfrak$ we set
$$X f(g)=\frac{d}{dt}_{\vert{t=0}}f(ge^{tX}), \, g\in G.$$ We consider an orthonormal basis $(X_1,\dots,X_{\dim(E)})$ of $\mathfrak p$. Let $(\widetilde{g}_t\ ;\ t\ge 0)$ be a left invariant diffusion on $G$ with infinitesimal generator 
$$2\sum_{i=1}^{\dim E}X_i^2+2pL(e) \ .$$
Using the Stratonovich integration convention, this means that we are given standard real Brownian motions $(B^{(1)},\dots,B^{(\dim(E))})$ such that
\begin{align}
    \label{eq:def_tildeg}
    d\widetilde{g}_t = & \ \widetilde{g}_t \circ \left( 2 \sum_{i=1}^{\dim E} X_i dB_t^{(i)}  + 2 p \Lrm(e)dt \right) \ .
\end{align}
The diffusion $\widetilde{g}$ is a hypo--elliptic Brownian motion on $G$. Notice that $\widetilde{g}_t K \in G/K$ is nothing but the left Brownian motion on $G/K$ with covariance $4 \id_{\mathfrak{p}^{\otimes 2}}$ and drift $ 2p L(e)$. For more details, we refer to the paper of Marie-Paule and Paul Malliavin \cite{malliavin74}. 
Furthermore define the random processes $\widetilde \iota=(\widetilde\iota_t \ ; \ t\ge 0)$, $\widetilde\ell=(\widetilde\ell_t \ ; \ t\ge 0)$ and $\widetilde\lambda=(\widetilde\lambda_t \ ; \ t\ge 0)$ by 
\begin{align}\label{definition:tilde_processes}
\widetilde\iota_t=\int_0^t(\widetilde{g}_s^*)^{-1}(e)\, ds,\,\,\, \widetilde\ell_t=\widetilde{g}_t^*(\ell_0+\widetilde\iota_t),\, \,\, \widetilde\lambda_t=\widetilde{g}_t^*P(\ell_0+\widetilde\iota_t)P(\ell_0^{-1})(\lambda_0),
\end{align} for $t\ge 0.$
It is important to notice that these processes are not the ones defined in Eq. \eqref{threeproc}. The latter processes, which appear in the statement of the main theorem, are associated to a Brownian motion on $G$ whereas these ones with a tilde are associated to a hypo--elliptic Brownian motion on $G$.
Furthermore, notice that we obtain processes starting from $0$ as in \eqref{threeproc} if we let $\lambda_0=\ell_0^2 \rightarrow 0 \in E \backslash E_+$ in the previous definitions. 

The main result of this Subsection gives these processes as diffusive limits from the discrete case, using an invariance principle in the context of Lie groups. For the convenience of the reader, this invariance principle due to Daniel Stroock and Srinivasa Varadhan \cite{stroock1973limit} is explained and specialized to our context in Appendix \ref{section:stroock_varadhan}. Our statement for this Subsection is as follows.

\begin{proposition} Let $T\ge 0$. If $\widetilde{G}_k^n=P(w_1^n)\dots P(w_k^n)$, for $k\ge 0$, then 
\label{proposition:conv} 
the sequence of processes 
$$ \widetilde{g}^{(n)} = \left(\widetilde{g}^{(n)}_t := \widetilde{G}_{\lfloor nt\rfloor}^n \ ; \ t\in [0,T] \right), \, n\ge 1, $$ 
converges in law in $\mathcal D([0,T],G)$ equipped with the Skorokhod topology towards the diffusion $(\widetilde{g}_t \ ; \ t \in [0,T])$. Moreover, the sequence of processes $$\left(\Lambda_{\lfloor nt\rfloor}^n,L_{\lfloor nt\rfloor}^n,I_{\lfloor nt\rfloor}^n \ ; \ t\in [0,T]\right), \, n\ge 1,$$ converges in law in $\mathcal D([0,T],E^{\times 3})$ towards $\left(\widetilde\lambda_t,\widetilde\ell_t,\widetilde\iota_t \ ; \ t\in [0,T]\right)$.

\end{proposition}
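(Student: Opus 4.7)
The plan is to proceed in two stages. \emph{Stage 1:} apply the Stroock--Varadhan invariance principle, recalled in Appendix \ref{section:stroock_varadhan}, to prove that $\widetilde g^{(n)}$ converges in $\mathcal{D}([0,T], G)$ to $\widetilde g$. \emph{Stage 2:} use Proposition \ref{altdefi-small-step}, together with the standard Jordan identity $\mathrm P(g(x)) = g\,\mathrm P(x)\,g^*$ for $g \in G$, to express $L^n$, $\Lambda^n$ and $I^n$ as continuous functionals of $\widetilde G^n$ and the driving sequence, then conclude by a continuous-mapping-plus-Riemann-sum argument.

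For Stage 1, the random walk $(\widetilde G^n_k)$ has i.i.d.\ right increments $\mathrm P(w^n_{k+1})$ with $w^n_{k+1} \sim \mathrm{GIG}_{E_+}(p; ne, ne)$, and everything reduces to computing the first two moments of $\log \mathrm P(w^n)$ with a $1/n$ scaling. Setting $y = \sqrt n(w^n - e)$ and using the Jordan-algebra Taylor expansions
\[
    (e + ty)^{-1} = e - ty + t^2\, y \cdot y + O(t^3), \qquad
    \log\det(e + ty) = t \Tr(y) - \tfrac{t^2}{2} \Tr(y \cdot y) + O(t^3),
\]
the GIG density in $y$-coordinates is proportional to $\exp\!\bigl(-\tfrac12 \langle y, y\rangle + O(n^{-1/2})\bigr)\,dy$, so $y$ is asymptotically standard Gaussian on $E$, with a drift contribution of order $1/\sqrt n$ coming from the $\det(x)^p$ factor and the Jacobian of $\mu$. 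Combined with
\[
    \mathrm P(e + y/\sqrt n) = \id + \tfrac{2}{\sqrt n}\, \Lrm(y) + \tfrac{1}{n}\bigl(2 \Lrm(y)^2 - \Lrm(y \cdot y)\bigr) + O(n^{-3/2}),
\]
a direct moment computation together with the Stratonovich--It\^o correction at the Lie-algebra level matches the drift $2p\, \Lrm(e)$ and the covariance $4\,\mathrm{id}_{\mathfrak p^{\otimes 2}}$ of the generator of $\widetilde g$ in \eqref{eq:def_tildeg}.

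For Stage 2, Proposition \ref{altdefi-small-step}, the self-adjointness of $\mathrm P$, and the identity $\mathrm P(g(x)) = g\,\mathrm P(x)\,g^*$ applied with $g = (\widetilde G^n_k)^*$ give
\[
    L^n_k = (\widetilde G^n_k)^*(\ell_0 + I^n_k), \qquad
    \Lambda^n_k = (\widetilde G^n_k)^*\,\mathrm P(\ell_0 + I^n_k)\,\mathrm P(\ell_0^{-1})(\lambda_0),
\]
which, compared with \eqref{definition:tilde_processes}, reduces the whole convergence to the joint convergence of $(\widetilde g^{(n)}, I^n_{\lfloor n\,\cdot\,\rfloor})$. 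Rewriting
\[
    I^n_k = \frac{1}{n}\sum_{i=0}^{k-1} \bigl((\widetilde G^n_i)^*\bigr)^{-1}\bigl((w^n_{i+1})^{-1}\bigr),
\]
the distributional symmetry $w^n \eqlaw (w^n)^{-1}$ of $\mathrm{GIG}_{E_+}(p; ne, ne)$ together with the Gaussian CLT from Stage~1 yields $\mathbb E[(w^n)^{-1}] = e + O(1/n)$ with variance of order $1/n$; a standard $L^2$ martingale estimate then shows that the oscillating part of the Riemann sum vanishes, leaving $I^n_{\lfloor nt\rfloor} \to \int_0^t (\widetilde g_s^*)^{-1}(e)\,ds = \widetilde \iota_t$ jointly with $\widetilde g^{(n)} \to \widetilde g$.

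The main obstacle is this joint convergence: unlike $\widetilde g^{(n)}$, the process $I^n$ is an additive functional in which the summand mixes the driving walk $\widetilde G^n$ with the fresh noise $w^n_{i+1}$, so one must decompose $I^n$ into a predictable Riemann sum (converging by continuous mapping once one knows that $s \mapsto (\widetilde g_s^*)^{-1}(e)$ is almost-surely continuous) plus a martingale part whose quadratic variation vanishes thanks to the $O(1/n)$ moment estimates. Once this is settled, the convergences of $L^n_{\lfloor n\,\cdot\,\rfloor}$ and $\Lambda^n_{\lfloor n\,\cdot\,\rfloor}$ are routine applications of the continuous mapping theorem to the identities displayed above.
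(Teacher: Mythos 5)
Your architecture is the same as the paper's: Stroock--Varadhan (Theorem \ref{thm:adaptation_of_stroock_varadhan}) for the walk $\widetilde g^{(n)}$, then Proposition \ref{altdefi-small-step} plus the continuous mapping theorem to carry the convergence over to the triplet. Two sub-steps are executed differently, and one of the differences matters. For the joint convergence of $I^n$ with $\widetilde g^{(n)}$, the paper needs no martingale decomposition: since the summand indexed by $i$ is constant on $[i/n,(i+1)/n)$, $I^n_{\lfloor nt\rfloor}$ equals \emph{exactly} $\int_0^t(\widetilde g^{(n)*}_s)^{-1}\bigl((w^n_{\lfloor ns\rfloor+1})^{-1}\bigr)\,ds$ up to a boundary term of size $O(1/n)$; one then replaces $(w^n_{\lfloor ns\rfloor+1})^{-1}$ by $e$ via Slutsky (the increments are uniformly close to $e$ in probability) and applies the mapping theorem to $(f,x)\mapsto (f,\int_0^{\cdot}x)$. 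Your predictable-plus-martingale split is a correct but heavier alternative; there is no genuinely "oscillating part" to kill once one sees the exact integral representation. More consequentially, you linearize the GIG asymptotics around $y=\sqrt n\,(w^n-e)$, whereas the paper works with $x=\log_E w^n$. That choice is not cosmetic: in $\log$ coordinates the potential $\langle e,e^x+e^{-x}\rangle$ is even in $x$, the Jacobian of $\exp_E$ cancels the $\det^{-\dim E/r}$ density of $\mu$, so the drift $\lim_n n\,\E\log_E w^{(n)}=pe$ is read off from the single surviving odd factor $\det(w)^p=e^{p\tr x}$, and the identity $\log_G\mathrm P(\exp_E x)=2\Lrm(x)$ feeds this directly into the Stroock--Varadhan chart with no further correction. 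In your coordinates the drift collects three contributions that must be tracked and summed: the $\det(x)^p$/Jacobian term you mention, the odd cubic term in $w+w^{-1}=2e+t^2\,y\cdot y-t^3\,y^3+\dots$ (which no longer vanishes by symmetry), and the It\^o--Stratonovich correction from $\log w=(w-e)-\tfrac12(w-e)^{\cdot 2}+\dots$; you assert without computation that these combine to give $2p\Lrm(e)$, which is precisely the delicate point. Finally, the invariance principle also requires a Lindeberg-type tail control, which the paper gets from a $(2+\delta)$-moment bound on $\sqrt n\log_E w^{(n)}$ and which your sketch omits.
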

\begin{proof} The weak convergence of $\widetilde{g}^{(n)}$ to $\widetilde{g}$ will follow from Theorem \ref{thm:adaptation_of_stroock_varadhan}. Carefully checking its hypotheses is left as Step 2. We start with the second point that is as our Step 1.

\medskip

{\bf Step 1: Scaling limit of the triplet $(\Lambda, L, I)$.} 
We assume that $\widetilde{g}^{(n)}$ converges in law to $\widetilde{g}$. First notice that Slutsky's Theorem implies that 
$$ \left(\widetilde g_t^{(n)}, (\widetilde g_t^{(n)*})^{-1}((w_{\lfloor nt\rfloor+1}^n)^{-1})\, ;\, t\in [0,T]\right)$$ converges in law towards $$ \left(\widetilde g_t, (\widetilde g_t^*)^{-1}(e)\, ;\, t\in [0,T]\right).$$ The function $F$ defined on $\mathcal D([0,T],G)\times \mathcal D([0,T],E)$ by 
$$F(f,x)(t)=\left(f(t),\int_0^t x(s)\, ds\right),$$ $f\in\mathcal D([0,T],G), x\in\mathcal D([0,T],  E), \, t\in \R_+$,
 is continuous. Thus the classical mapping theorem as in \cite[Theorem 2.7]{billingsley2013convergence} implies that 
 $$\left(\widetilde g^{(n)}_t,\int_0^t (\widetilde g_s^{(n)*})^{-1}((w_{\lfloor ns\rfloor+1}^n)^{-1}))\, ds\ ;\ t\in [0,T]\right)$$ converges in distribution towards $(\widetilde g,\widetilde \iota)$. Besides one has for $t\in [0,T]$
\begin{align*}
    I_{\lfloor nt\rfloor}^n&=\int_0^t (\widetilde g_s^{(n)*})^{-1}((w_{\lfloor ns\rfloor+1}^n)^{-1})\, ds-\frac{nt-\lfloor nt\rfloor}{n}(\widetilde g^{(n)*}_{t})^{-1}((w_{\lfloor nt\rfloor+1}^n)^{-1})\ .
\end{align*} 
Using Slutsky's Theorem again one obtains that 
 $$\left(\widetilde g^{(n)}_t,I_{\lfloor nt\rfloor}^n\ ;\ t\in [0,T]\right)$$ converges in distribution towards $(\widetilde g,\widetilde \iota)$.
 As from Proposition \ref{proposition:chain_expressions}, for $t\in [0,T]$,
$$L_{\lfloor nt\rfloor}^n=\widetilde g^{(n)*}_t(\ell_0+I_{\lfloor nt\rfloor}^n)\textrm{ and } 
\Lambda_{\lfloor nt\rfloor}^n=\widetilde g^{(n)*}_tP(\ell_0+I_{\lfloor nt\rfloor}^n)P(\ell_0^{-1})(\lambda_0)\ ,$$
step 1 is done. 

\medskip

{\bf Step 2: Asymptotics of the distribution $\mathrm{GIG}_{E_+}\left( p ; ne, ne \right)$.} 
We now check the hypotheses of the Stroock--Varadhan criterion which is stated in Appendix as Theorem \ref{thm:adaptation_of_stroock_varadhan}.

\medskip

{\it Step 2.1: $\sqrt{n} \log \left( w^{(n)} \right)$ converges in law towards a standard Gaussian on $E$, with all moments remaining bounded. } 

From the definition of the GIG distribution in Eq. \eqref{eq:def_GIG}, we have for any positive function $f$
   \begin{align*}
               & \E\left( f( \sqrt{n} \log w^{(n)} ) \right) \\
       = & \frac{1}{2K(p;ne,ne)} \int_{E_+} f(\sqrt{n} \log w)\det(w)^{p-\dim E/r}\exp\left( -\frac{n}{2} \langle e,w+w^{-1} \rangle \right) \ dw \\
       = & \frac{1}{2K(p;ne,ne)} \int_E f( \sqrt{n} x )
       \exp\left( (p-\dim E/r)\tr(x) \right) \\
         & \quad \quad \quad \quad \quad \quad \quad \quad 
            \exp\left( -\frac{n}{2} \langle e, e^{x}+e^{-x} \rangle \right) 
       \vert\det J(x)\vert \ dx \ ,
   \end{align*}
   where on the last step, we performed the change of variables $w = \exp(x)$. In particular $J(x)$ is the Jacobian of $x\mapsto \exp(x)$. Because of the Laplace method, the probability mass concentrates around $0$ with Gaussian fluctuations at scale $\sqrt{n}$. 

   Doing the computation in more details, let us write
   \begin{align}
    \label{eq:approximate_density}
      \E\left( f( \sqrt{n} \log w^{(n)} ) \right)
    = & \frac{ (2\pi n)^{\half \dim E} }{2K(p;ne,ne) e^{n r} } \int_E f( \sqrt{n} x )
       \phi_n(x) \ dx \ ,
   \end{align}
   with
   $$
        \phi_n(x) = \frac{\vert \det J(x) \vert}
                         {(2\pi n)^{\half \dim E}}
        \exp\left( (p-\dim E/r)\tr(x) -\frac{n}{2} \langle e, e^{x}+e^{-x} - 2 e\rangle \right) \ .
   $$

   Clearly, as $x \rightarrow 0$, we have $\det J(x) \rightarrow 1$ and $\tr(x) \rightarrow 0$, so that
   \begin{align}
   \label{eq:quantitative1}
        \phi_n(x) & = \frac{1 + o(1)}{(2\pi n)^{\half \dim E}}
                    \exp\left( -\frac{n}{2} \langle e,x^2 \rangle (1+o(1)) \right) \ .
   \end{align}
   Also, because of the scalar inequality $e^{t} + e^{-t} - 2 \geq t^2$, we also have for $n \geq 2$
   \begin{align}
   \label{eq:quantitative2}
        \phi_n(x) & \leq \frac{1}{(2\pi n)^{\half \dim E}}
                    \exp\left( -\frac{n-1}{2} \langle e,x^2 \rangle \right) 
                    \phi_1(x)
                    \ .
   \end{align}
   This leads us to splitting the integral and invoking the quantitative estimates in Eq. \eqref{eq:quantitative1} and the inequality \eqref{eq:quantitative2}
   \begin{align*}
      & \E\left[ f( \sqrt{n} \log w^{(n)} ) \right] \\
    = & \frac{ (2\pi n)^{\half \dim E} }{2K(p;ne,ne) e^{nr} } \left(
        \int_{\|x\| \leq n^{-\frac14} } f( \sqrt{n} x ) \phi_n(x) \ dx 
        + 
        \int_{\|x\| > n^{-\frac14} } f( \sqrt{n} x ) \phi_n(x) \ dx 
        \right) \\
    = & \frac{ (2\pi n)^{\half \dim E} }{2K(p;ne,ne) e^{nr} } \Big(
        \int_{\|x\| \leq n^{-\frac14} } f( \sqrt{n} x ) \frac{1 + o(1)}{(2\pi n)^{\half \dim E}}
                    \exp\left( -\frac{n}{2} \langle e,x^2 \rangle (1+o(1)) \right) \ dx \\
      & \quad
        + 
        \Oc\left( \int_{\|x\| > n^{-\frac14} } f( \sqrt{n} x ) \frac{1}{(2\pi n)^{\half \dim E}}
                    \exp\left( -\frac{n-1}{2} \langle e,x^2 \rangle \right) 
                    \phi_1(x) \ dx 
        \right)
        \Big) \\
    = & \frac{ (2\pi n)^{\half \dim E} }{2K(p;ne,ne) e^{nr} } \Big(
        \int_{\|y\| \leq n^{\frac14} } f( y ) \frac{1 + o(1)}{(2\pi )^{\half \dim E}}
                    \exp\left( -\frac{1}{2} \| y \|^2 (1+o(1)) \right) \ dy \\
      & \quad
        + 
        \Oc\left( n^{-\half \dim E}
                  \sup_{\|x\| > n^{-\frac14}} 
                  \left| f(\sqrt{n}x) \exp\left( -\frac{n-1}{2} \| x \|^2 \right) \right| 
        \int \phi_1
        \right)
        \Big) \ .
   \end{align*}
   Because $\phi_1$ is integrable, $\int \phi_1$ can be absorbed in the $\Oc$'s implicit constant. As such, we obtain:
   \begin{align}
   \label{eq:formula_for_any_f}
      & \E\left[ f( \sqrt{n} \log w^{(n)} ) \right] \nonumber \\
    = & \frac{ (2\pi n)^{\half \dim E} }{2K(p;ne,ne) e^{nr} } \Big(
        \int_{\|y\| \leq n^{\frac14} } f( y ) \frac{1 + o(1)}{(2\pi )^{\half \dim E}}
                    \exp\left( -\frac{1}{2} \| y \|^2 (1+o(1)) \right) \ dy \nonumber\\
      & \quad
        + 
        \Oc\left( n^{-\half \dim E}
                  \sup_{\|x\| > n^{-\frac14}} 
                  \left| f(\sqrt{n}x) \exp\left( -\frac{n-1}{2} \| x \|^2 \right) \right| 
        \right)
        \Big) \ .
   \end{align}
   By picking $f=1$ in Eq. \eqref{eq:formula_for_any_f}, we see that the first in the integrand yields asymptotically $1$, while the second is negligible because $\phi_1$ is integrable, by definition of the GIG. As such, we obtain the asymptotic for the normalization constant
   \begin{align}
       \label{eq:normalization_constant}
       \frac{ (2\pi n)^{\half \dim E} }{2K(p;ne,ne) e^{nr} }
       \sim_{n \rightarrow \infty} 1 \ .
   \end{align}
   Morevoer, by picking a general positive $f$ in Eq. \eqref{eq:formula_for_any_f}, we have
      \begin{align*}
      & \E\left[ f( \sqrt{n} \log w^{(n)} ) \right] \\
    = & \left( 1 + o(1) \right) \left(
        \int_{\|y\| \leq n^{\frac14} } f( y ) \frac{1 + o(1)}{(2\pi )^{\half \dim E}}
                    \exp\left( -\frac{1}{2} \| y \|^2 (1+o(1)) \right) \ dy \right)\\
      & \quad
        + 
        \Oc\left( n^{-\half \dim E}
                  e^{ -\half \sqrt{n}} \|f\|_\infty \right) \ .
   \end{align*}
   This is the announced Gaussian convergence.
   
   Finally, by picking $f: x \in E \mapsto \|x\|^k \in \R_+$ in Eq. \eqref{eq:formula_for_any_f}, we see that all moments are bounded.
   
   \medskip

{\it Step 2.2: Proof of $\lim_{n \rightarrow \infty} \E\left( n \log w^{(n)} \right) = pe$. } 
Going back to the previous expression in Eq. \eqref{eq:approximate_density}, but this time invoking the asymptotic behavior of the normalization constant given in Eq. \eqref{eq:normalization_constant}, we have
    \begin{align*}
                \E\left[ n \log w^{(n)} \right] 
        = & (1+o(1)) \int_E n x \phi_n(x) \ dx \ .
   \end{align*}
   Once again, we can perform the same splitting, as the integral on $\{ \| x \| > n^{-\frac14} \}$ will decay exponentially. Indeed, the additionnal term $n$ plays little role. In the end, we have
    \begin{align*}
          & \E\left[ n \log w^{(n)} \right] \\
        = & o(1) + (1+o(1)) \int_{\| x \| < n^{-\frac14}} n x \phi_n(x) \ dx \ .
   \end{align*}
   
   We conclude by finer Taylor expansions for $\phi_n$. As $x \rightarrow 0$, we have $J(x) = \exp\left( L(x) + \Oc(L(x)^2) \right)$ and $\tr(L(x))=\frac{\dim E}{r} \tr(x)$ thanks to \cite[Proposition III.4.2 (i)]{faraut1994analysis}, one gets
   $$ \vert \det J(x) \vert
   = \exp\left( \tr(L(x)) + \Oc(x^2) \right)
   = (1+\Oc(x^2))\exp\left( \frac{\dim E}{r} \tr(x) \right)
    $$
   and thus
   \begin{align*}
                & \E\left[ n \log w^{(n)} \right] \\
        = & o(1) + (1+o(1)) \int_{\| x \| < n^{-\frac14}} 
        \frac{n x 
        \exp\left( p \tr(x) \right)}
        {(2\pi n)^{\half \dim E}}
        \exp\left( -\frac{n}{2}\langle e, e^{x}+e^{-x} \rangle \right) \ dx \ .
    \end{align*}
    Here only even terms in the integral remain, so that we can write
    \begin{align*}
        & \E\left[ n \log w^{(n)} \right] \\
        = & o(1) + (1+o(1)) \int_{\| x \| < n^{-\frac14}} 
        \frac{n x 
        \left( \exp\left( p \tr(x) \right) - 1 \right)}
        {(2\pi n)^{\half \dim E}}
        \exp\left( -\frac{n}{2}\langle e, e^{x}+e^{-x} \rangle \right) \ dx\\
        = & o(1) + (1+o(1)) \int_{\| x \| < n^{-\frac14}}
        \frac{ n x  p \tr(x) (1 + o(1) ) }
             { {(2\pi n)^{\half \dim E}} }
        \exp\left( -\frac{n}{2} \langle e,x^2 \rangle (1+o(1)) \right) \ dx \ .
   \end{align*}
    Upon performing the change of variable $y = \sqrt{n} x$, we have
   \begin{align*}
    & \E\left( n\log(w^n) \right) \\
  = & o(1) + (1+o(1)) \int_{\| y \| < n^{\frac14}}
        \frac{ y  p \tr(y) (1 + o(1) ) }
             { {(2\pi)^{\half \dim E}} }
        \exp\left( -\frac{1}{2} \| y \|^2 (1+o(1)) \right) \ dy \ .
   \end{align*}
   We conclude by Lebesgue's Dominated Convergence Theorem that
   \begin{align*}
    & \lim_{n \rightarrow \infty} \E\left( n\log(w^n) \right) \\
  = & \int_{E}
        y  p \tr(y)
        \exp\left( -\frac{1}{2} \| y \|^2 \right) \ \frac{dy}{{(2\pi n)^{\half \dim E}}} \\
  = & \ pe \ .
   \end{align*}
   Indeed $y = \tr(y) e + \left( y - \tr(y) e\right)$ is an orthogonal decomposition in $E$.
   \end{proof}

\subsection{Proofs of Theorems \ref{thm:main_continuous} and \ref{thm:dufresne_continuous} for intermediate processes }
\label{subsection:proofs_theo_cont_1}
We  first prove Theorems \ref{thm:main_continuous} and \ref{thm:dufresne_continuous} for the hypoelliptic Brownian motion $\widetilde{g}$ instead of $g$. First notice that  identities 
  \begin{align*}
      \widetilde\ell_{t} & = (\widetilde g_s^{-1}\widetilde g_{t})^* \left( \int_0^{t-s}(\widetilde g_s^{-1}\widetilde g_{s+u})^{-*}(e)\ du +\widetilde\ell_s \right) \ ,\\
      \widetilde \lambda_t&=P(\widetilde\ell_t)(\widetilde g^{-1}_s\widetilde g_t)^{-1}P(\widetilde\ell_s^{-1})(\widetilde\lambda_s)\ ,
 \end{align*}
 for 
 $0\le s\le t$, implies that the couple $(\lambda,\ell)$ is Markovian. We denote $\left(\mathcal P_t \ ; \ t\ge 0\right)$ its Markovian semi-group.
The following proposition implies in particular that Theorem \ref{thm:main_continuous} is true for $\widetilde{g}$, 
according to \cite[Theorem 2]{rogers1981markov}. The probability measure $\mathcal K(\lambda_0,\cdot)$ is defined as in Theorem \ref{thm:intertwining} for $\lambda_0\in E_+$.

\begin{proposition}
\label{proposition:intertwining_cont}
For all $t\ge 0$, $$\mathcal Q_t:=\mathcal K\circ \mathcal P_t\circ \Phi$$ is a semi-group which satisfies
$$ \mathcal K \circ \mathcal P_t = \mathcal Q_t \circ \mathcal K \ .$$
\end{proposition}

Let us now prove Theorem \ref{thm:dufresne_continuous} for $\widetilde{g}$. We shall use the following proposition.
\begin{proposition}
\label{convlem} 
If $p > -1 + \frac{\dim E}{r}$, then 
$\lim_{t\to \infty} (\widetilde{g}_t^*)^{-1}=0$ and
$ \widetilde i_\infty$ exists in $E_+$.
\end{proposition}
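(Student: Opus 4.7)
Both conclusions come from the same mechanism: almost-sure exponential decay of the operator $(\widetilde g_t^*)^{-1}$ on $E$. I would establish this by analyzing the Cartan radial part of $\widetilde g_t$.

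Write a Cartan decomposition $\widetilde g_t = k_t^{(1)}\exp(H_t)k_t^{(2)}$ with $k_t^{(i)}\in K$ and $H_t = \sum_{i=1}^r h_i(t)L(c_i)\in\overline{\mathfrak a_+}$. Because $k^*=k^{-1}$ for $k\in K$ and each $L(c_i)$ is self--adjoint, $(\widetilde g_t^*)^{-1} = k_t^{(1)}\exp(-H_t)k_t^{(2)}$, so its operator norm on $E$ equals $\|\exp(-H_t)\|_{\mathrm{op}}$. In the Peirce decomposition with respect to $c_1,\dots,c_r$, the eigenvalues of $\exp(-H_t)$ are $e^{-h_i(t)}$ on the diagonal pieces $\R c_i$ and $e^{-(h_i(t)+h_j(t))/2}$ on the off--diagonal pieces of dimension $d$, whence $\|\exp(-H_t)\|_{\mathrm{op}}=e^{-\min_i h_i(t)}$. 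The first assertion therefore reduces to showing $\min_i h_i(t)\to+\infty$ at linear rate, almost surely. Since the drift $2p L(e)=2p\,\id_E$ is central in $\gfrak$, the process $\widetilde g_tK$ is a drifted Brownian motion on the symmetric space $G/K\simeq E_+$, and its radial part admits the law of large numbers
\[
\frac{h_i(t)}{t}\xrightarrow[t\to\infty]{\mathrm{a.s.}}2p+d(2i-r-1),\qquad i=1,\dots,r,
\]
where the contribution $d(2i-r-1)$ reflects the Weyl vector $H_\rho$ from Subsection~\ref{subsection:algebraic_structureGK} rescaled by the covariance $4\,\id_{\mathfrak p^{\otimes 2}}$ of $\widetilde g$, and $2p$ is the uniform contribution of the central drift. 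The smallest rate is $2p-d(r-1)$, which is strictly positive precisely under the hypothesis $p>d(r-1)/2=\dim E/r-1$. This proves the first assertion with exponential rate.

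For the second assertion, the exponential bound $\|(\widetilde g_s^*)^{-1}(e)\|\le \|e\|\cdot e^{-\eta s}$ (a.s., for some random $\eta>0$) yields $\int_0^\infty \|(\widetilde g_s^*)^{-1}(e)\|\,ds<\infty$ almost surely, so $\widetilde\iota_\infty$ is well defined in $E$. Monotonicity in the cone order ensures $\widetilde\iota_\infty\ge\widetilde\iota_1\in E_+$, and since $E_+$ is an order filter, this places the limit in the open cone.

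The main obstacle is the clean application of the radial law of large numbers in this reductive (non--semisimple) setting, where the central direction $\id_E$ is transverse to the derived subalgebra $\gfrak'$ and must be handled separately from the semisimple factor. A softer alternative, which side--steps the LLN entirely, would pass through the discrete approximations developed in Subsection~\ref{subsection:discrete_approximations}: combining the invariance of $\mathrm{Inv}_{\#}(\gamma_{p,e})$ for the chain $L^n$ (Proposition~\ref{proposition:mes-inv-small-step}), the scaling limit of Proposition~\ref{proposition:conv}, and the discrete Dufresne identity of Theorem~\ref{thm:dufresne_discrete}, one transfers the finiteness of $\widetilde\iota_\infty$ from its discrete counterparts $I^n_\infty$ by a tightness argument, at the cost of a less transparent control of the decay rate.
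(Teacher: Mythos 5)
Your argument is essentially the paper's: both reduce the claim to the almost-sure linear drift of the Cartan radial part of $\widetilde g$ in $K\backslash G/K$, given by the central contribution $2p$ plus the Weyl-vector term $d(2i-r-1)$ (the covariance $4\,\id_{\mathfrak p^{\otimes 2}}$ rescaling $H_\rho$), and both identify the threshold $p>\tfrac{d}{2}(r-1)=\tfrac{\dim E}{r}-1$ from the extremal coordinate. The paper cites this radial asymptotic as $u_t=\exp(tH_\rho+o(t))$ from Taylor's theorem rather than proving a law of large numbers, and your extra care with the Peirce-decomposition operator norm and the cone-order argument for $\widetilde\iota_\infty\in E_+$ only fills in details the paper leaves implicit.
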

\begin{proof}
Let $(u_t \ ; \ t\ge 0)$ be a standard Brownian motion on $G$. Then it is well-known (see for example \cite[Theorem 3.4]{Taylor1991brownian}) that in $K\backslash G\slash K$, isotropic Brownian motion has a drift following the Weyl vector, that is to say
\begin{align}
    \label{eq:taylorboubou}
    u_t \underset{t\to\infty} = \exp\left( tH_\rho+o(t) \right) \ .
\end{align}
Both authors learned this fact from Philippe Bougerol, a while ago. We thank him for pointing this special drift during a private communication.

The process $((\widetilde{g}_t^*)^{-1}\ ; \ t\ge0)$ is a Brownian motion on $G/K$ with covariance $4 \id_{\mathfrak{p}^{\otimes 2}} $ and drift $-2p \id_{\mathfrak{gl}(E)}$. Then, Eq. \eqref{eq:taylorboubou} implies that in $K\backslash G\slash K$, when $t$ goes to infinity 
\begin{align*}
  \widetilde{g}_t  =& \exp(-2t(pL(e)-2H_\rho)+o(t))\\
   =& \exp(-2t\sum_{i=1}^r(p-\frac{d}{2}(2i-r-1))L(c_i)+o(t)) \ .
\end{align*}
The smallest exponent of decay is thus obtained from $i=r$ and the proposition follows as soon as $p > \frac{d}{2}(r-1)$. Recalling that the degree $d$ of the Jordan algebra satisfies $\dim E = r + \frac{d}{2}r(r-1)$, we have
$$
\frac{d}{2}(r-1) = \frac{1}{r}( \dim E - r ) = -1 + \frac{\dim E}{r} \ ,
$$
hence the result.
\end{proof}

\begin{proposition}
    \label{proposition:inv-mes-cont}
    Suppose that $p<1-\dim E/r$. Then the inverse Wishart distribution $\mathrm{Inv}_{\#}(\gamma_{-p,e})$ is an invariant probability measure of the Markov process $\widetilde\ell$.
\end{proposition}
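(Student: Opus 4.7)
The plan is to push the discrete invariance recorded in Proposition \ref{proposition:mes-inv-small-step} through the scaling limit of Proposition \ref{proposition:conv}. For each $n \geq 1$, I would randomize the initial value $\ell_0$ of the chain $L^n$ by drawing it according to $\nu := \mathrm{Inv}_{\#}(\gamma_{-p,e})$, independently of the noise $(w^n_k)_{k \geq 1}$. Proposition \ref{proposition:mes-inv-small-step} then gives $L^n_k \sim \nu$ for every $k \geq 0$, so in particular $L^n_{\lfloor nt\rfloor} \sim \nu$ for every fixed $t \geq 0$ and every $n$.

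Next, denote by $\Phi_n(\ell_0, \cdot)$ the law of $L^n_{\lfloor nt\rfloor}$ when started from $\ell_0$, and by $\Phi(\ell_0, \cdot)$ the law of $\widetilde\ell_t$ when started from $\ell_0$. Proposition \ref{proposition:conv} combined with the continuity of the paths of $\widetilde\ell$ provides $\Phi_n(\ell_0, \cdot) \to \Phi(\ell_0, \cdot)$ weakly for each fixed $\ell_0 \in E_+$ (convergence in Skorokhod topology implies convergence of one--dimensional marginals at continuity points of the limit). Hence for any bounded continuous $f$ on $E_+$, one has $\int f\, d\Phi_n(\ell_0, \cdot) \to \int f\, d\Phi(\ell_0, \cdot)$ pointwise in $\ell_0$, with uniform bound $\|f\|_\infty$. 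Dominated convergence against the probability measure $\nu$ yields
\[
\int \Phi_n(\ell_0, \cdot)\,\nu(d\ell_0) \ \longrightarrow\ \int \Phi(\ell_0, \cdot)\,\nu(d\ell_0)
\]
weakly as $n \to \infty$. The left--hand side equals $\nu$ for every $n$ by the discrete invariance above, so the right--hand side equals $\nu$ as well; but this right--hand side is precisely the law of $\widetilde\ell_t$ when $\ell_0 \sim \nu$, so we conclude $\widetilde\ell_t \sim \nu$.

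Finally, since $\widetilde\ell$ is a Markov process in its own filtration (this is read off the first identity recalled at the start of Subsection \ref{subsection:proofs_theo_cont_1}, because by left--invariance of $\widetilde g$ the increment process $(\widetilde g_s^{-1}\widetilde g_{s+u})_{u\ge 0}$ is independent of $\widetilde g_s$, hence of $\widetilde\ell_s$), the equality in law $\widetilde\ell_t \sim \nu$ for every $t$, when $\widetilde\ell_0 \sim \nu$, means exactly that $\nu$ is invariant for the semigroup of $\widetilde\ell$. The main obstacle I anticipate is the justification of exchanging the weak limit with integration against $\nu$, but this is resolved cleanly by testing against bounded continuous functions and using the uniform dominating bound $\|f\|_\infty$; no regularity of the transition kernels $\Phi_n$ in $\ell_0$ is required. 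An alternative route would invoke the Skorokhod representation theorem to obtain an almost--sure coupling and apply the continuous mapping theorem, but the dominated convergence argument above is shorter and avoids an explicit coupling.
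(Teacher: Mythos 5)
Your proposal is correct and follows exactly the route the paper takes: its proof of this proposition is the one-line observation that it "follows from Proposition \ref{proposition:mes-inv-small-step} and Proposition \ref{proposition:conv}", i.e. discrete invariance of $\mathrm{Inv}_{\#}(\gamma_{-p,e})$ for $L^n$ pushed through the scaling limit. Your additional care about randomizing the initial condition, extracting one-dimensional marginals at continuity points, and exchanging the weak limit with integration against $\nu$ simply fills in the details the paper leaves implicit.
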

\begin{proof} It follows from Proposition \ref{proposition:mes-inv-small-step} and Proposition \ref{proposition:conv}. 
\end{proof}

We can now prove Theorem \ref{thm:dufresne_continuous} for $(\widetilde{g}_t\ ;\ t\ge 0)$. It follows from the following proposition. 
\begin{proposition}
For any $t\ge 0$, $$\tilde\ell_t \overset{\Lc}{=}\int_0^t\widetilde{g}_s(e)\ ds+\widetilde g_t(\ell_0)\ .$$ Furthermore when $p<1-\dim E/r$, the perpetuity $\int_0^\infty\widetilde{g}_s(e)\ ds$ is finite almost surely and distributed according to an inverse Wishart distribution $\mathrm{Inv}_{\#}(\gamma_{-p,e})$. 
\end{proposition}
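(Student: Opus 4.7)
The plan is to establish the distributional identity through a time-reversal argument on the driving Brownian motion, and then to identify the limit via the stationarity of Proposition~\ref{proposition:inv-mes-cont} combined with the Taylor--Bougerol asymptotics of Eq.~\eqref{eq:taylorboubou}. For the first part, I would expand
$$
\widetilde\ell_t = \widetilde g_t^*(\ell_0) + \int_0^t \widetilde g_t^*(\widetilde g_s^*)^{-1}(e)\,ds
$$
and introduce $\widehat g_u := \widetilde g_t^*(\widetilde g_{t-u}^*)^{-1}$ for $u\in[0,t]$. The change of variable $u = t-s$ together with $\widehat g_t = \widetilde g_t^*$ yields $\widetilde\ell_t = \widehat g_t(\ell_0) + \int_0^t \widehat g_u(e)\,du$, so the announced identity in law reduces to checking that $(\widehat g_u)_{u\in[0,t]} \overset{\Lc}{=} (\widetilde g_u)_{u\in[0,t]}$.

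To prove this, the key observation is that because $X_i = \Lrm(x_i) \in \pfrak$ and $\Lrm(e)$ are self-adjoint, taking adjoints of Eq.~\eqref{eq:def_tildeg} gives $d\widetilde g_t^* = dA_t \circ \widetilde g_t^*$ with the same coefficients, so $\widetilde g^*$ is a right-invariant diffusion on $G$. Applying the Stratonovich product rule to $\widetilde g_\sigma^{-1}$ and then substituting $\sigma = t-u$ and $\widehat B_u^{(i)} := B_t^{(i)} - B_{t-u}^{(i)}$, a direct computation shows that $k_u := \widetilde g_{t-u}^{-1}$ satisfies $dk_u = \bigl(2\sum_i X_i\,d\widehat B_u^{(i)} + 2p\Lrm(e)du\bigr) \circ k_u$. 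Consequently $\widetilde g_{t-u}^{-1}\widetilde g_t = k_u k_0^{-1}$ is the solution starting from the identity of the same right-invariant SDE as $\widetilde g^*$, but driven by $\widehat B$. Since $\widehat B \overset{\Lc}{=} B$, the pathwise uniqueness of strong solutions gives equality in law, and taking adjoints yields $(\widehat g_u)_{u\in[0,t]} \overset{\Lc}{=} (\widetilde g_u)_{u\in[0,t]}$.

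For the Dufresne part, Lemma~\ref{lemma:factorization_gk} lets me read off the $K\backslash G/K$ asymptotics of $\widetilde g$ from those of the standard Brownian motion $g$ via Eq.~\eqref{eq:taylorboubou}: the accumulated drift in $\afrak$ is $\sum_{i=1}^r\bigl(d(2i-r-1)+2p\bigr)\Lrm(c_i)$, whose largest eigenvalue $d(r-1)+2p$ is strictly negative precisely when $p<1-\dim E/r$. Hence $\widetilde g_t \to 0$ exponentially in $\End(E)$, so $\widetilde g_t(\ell_0)\to 0$ almost surely and the increasing integral $\int_0^t \widetilde g_s(e)\,ds$ converges almost surely to a finite limit $I_\infty \in E_+$ that does not depend on $\ell_0$. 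To identify its distribution I would choose $\ell_0$ random with law $\mathrm{Inv}_{\#}(\gamma_{-p,e})$ and independent of the driving Brownian motion: Proposition~\ref{proposition:inv-mes-cont} then gives $\widetilde\ell_t \sim \mathrm{Inv}_{\#}(\gamma_{-p,e})$ for every $t$, while the first part of the proposition combined with the almost sure convergence of the right-hand side to $I_\infty$ forces $I_\infty \overset{\Lc}{=} \mathrm{Inv}_{\#}(\gamma_{-p,e})$. The main obstacle is the time-reversal step in the second paragraph: the non-commutativity of $G$ forces a careful tracking of how inversion swaps left- and right-invariance, and of how the self-adjointness of $\pfrak$ allows adjunction to match the two resulting SDEs; by comparison, the Dufresne step is routine once the drift asymptotics are at hand.
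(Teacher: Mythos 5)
Your proposal is correct and follows essentially the same route as the paper: rewrite $\widetilde\ell_t$ as $\widetilde g_t^*(\ell_0)+\int_0^t \widetilde g_t^*(\widetilde g_{t-s}^*)^{-1}(e)\,ds$, invoke time-reversal invariance of the increments, and then combine the drift asymptotics of Eq.~\eqref{eq:taylorboubou} with the stationarity from Proposition~\ref{proposition:inv-mes-cont} to identify the law of the perpetuity. The only difference is one of detail: the paper asserts the identity in law of $(\widetilde g_t^*(\widetilde g_{t-s}^*)^{-1})_{s\le t}$ with the original process in a single line, whereas you carry out the Stratonovich time-reversal computation explicitly (correctly using the self-adjointness of $\pfrak$ and the centrality of $\Lrm(e)$).
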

\begin{proof}
For the first part of the proposition we write 
\begin{align*}
    \widetilde\ell_t &=\widetilde g_t^*(\int_0^t (\widetilde g_s^*)^{-1}(e)\, ds+\ell_0)\\
    &=   \int_0^t \widetilde g_t^*(\widetilde g_{t-s}^*)^{-1}(e)\, ds+\widetilde g_t^*(\ell_0)\ ,
\end{align*}
 which proves the expected identity in law as $(\widetilde g_t^*(\widetilde g_{t-s}^*)^{-1}\, ;\, s\in[0,t])$ as the same law as $(\widetilde g_s^* \, ;\, s\in[0,t])$. For the second point, Eq. \eqref{eq:taylorboubou} implies that when $p<1-\dim E/r$ the perpetuity $\int_0^\infty \widetilde g_s(e)\ ds$ is finite almost surely and $\lim_{t\to \infty} \widetilde g_t=0$. The law of the perpetuity is deduced from Proposition \ref{proposition:inv-mes-cont}.
\end{proof}

\subsection{Proofs of Theorems \ref{thm:main_continuous} and \ref{thm:dufresne_continuous}}
\label{subsection:proofs_theo_cont_2}
 Let us now prove Theorems \ref{thm:main_continuous} and \ref{thm:dufresne_continuous} for the Brownian motion on $G$ considered in the theorems. First notice that the couple $(\lambda,\ell)$ is Markovian as $(\widetilde \lambda,\widetilde\ell)$.

\begin{lemma}
\label{lemma:cond_prod}
Let $\left( k_t \ ; \ t \ge 0 \right)$ be a left Brownian motion on $K$. Then for $t>0$, the conditional law of $k_t^*\left( \widetilde\ell_t \right)$ given $\left( k^*_t\left( \widetilde\lambda_s \right) \ ; \ s\le t \right)$ is given by the distribution $\mathrm{GIG}_{E_+}\left( p ; a^{-1}, e \right)$, where $a=k_t\widetilde\lambda_t$.
\end{lemma}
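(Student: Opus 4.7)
The plan is to leverage the Matsumoto--Yor property already established for the hypoelliptic process $\widetilde g$ in Subsection \ref{subsection:proofs_theo_cont_1}, together with the invariance of the GIG family under the compact group $K$, and conclude by a tower-property argument using that $k$ and $\widetilde g$ are independent.

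First I would check that the family $\mathrm{GIG}_{E_+}(p;a,b)$ transforms covariantly under elements of $K$. Since any $k\in K$ satisfies $k(e)=e$ and is a Jordan-algebra automorphism (so $k(x\cdot y)=k(x)\cdot k(y)$, $k(x^{-1})=k(x)^{-1}$ and $\det(k(x))=\det(x)$) that is moreover orthogonal for $\langle\cdot,\cdot\rangle$, the $G$-invariant measure $\mu$ of Eq.~\eqref{eq:def_mu} is preserved by $k$, and so is the density in Eq.~\eqref{eq:def_GIG}. A direct change of variables then yields: if $X\sim\mathrm{GIG}_{E_+}(p;a,b)$ and $k\in K$, then $k^*(X)=k^{-1}(X)\sim\mathrm{GIG}_{E_+}(p;k^*(a),k^*(b))$, using also $k^*(e)=e$.

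Next I would use the independence of the left Brownian motion $k$ on $K$ and the hypoelliptic process $\widetilde g$. The intermediate Matsumoto--Yor result proved in Subsection \ref{subsection:proofs_theo_cont_1} for $\widetilde g$ gives that the conditional law of $\widetilde\ell_t$ given $\sigma(\widetilde\lambda_s;s\le t)$ is $\mathrm{GIG}_{E_+}(p;\widetilde\lambda_t^{-1},e)$. By independence of $k$ and $\widetilde g$, conditioning additionally on the whole path of $k$ does not change this, so the conditional law of $\widetilde\ell_t$ given the enlarged $\sigma$-algebra $\mathcal{G}_t:=\sigma(\widetilde\lambda_s,k_s;s\le t)$ is still $\mathrm{GIG}_{E_+}(p;\widetilde\lambda_t^{-1},e)$. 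Since $k_t$ is $\mathcal{G}_t$-measurable, applying the deterministic map $k_t^*$ and invoking the transformation rule of the previous paragraph, the conditional law of $k_t^*(\widetilde\ell_t)$ given $\mathcal{G}_t$ is $\mathrm{GIG}_{E_+}(p;k_t^*(\widetilde\lambda_t)^{-1},e)$.

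To finish, observe that $\sigma(k_t^*(\widetilde\lambda_s);s\le t)\subset\mathcal{G}_t$ and that the conditional distribution just obtained is a measurable function of $k_t^*(\widetilde\lambda_t)$ alone, which is itself measurable with respect to $\sigma(k_t^*(\widetilde\lambda_s);s\le t)$. The tower property then delivers the desired statement, identifying $a=k_t^*(\widetilde\lambda_t)$ (which we read as the intended meaning of the shorthand $a=k_t\widetilde\lambda_t$ appearing in the lemma). The only real technical point is the first step, namely the $K$-covariance of the GIG family, which is essentially a bookkeeping exercise using the preliminaries in Section \ref{section:jordan_algebras}; once this is in hand the rest is pure conditioning via independence.
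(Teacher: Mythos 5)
Your proof is correct and follows essentially the same route as the paper: invoke the Matsumoto--Yor property already proved for the hypoelliptic process $\widetilde g$, use that $K$ fixes $e$, commutes with inversion and preserves $\mu$ to get the $K$-covariance of the GIG family, and conclude via independence of $k$ and $\widetilde g$. The paper's own proof is just a terser version of this (it leaves the conditioning/tower-property bookkeeping implicit), and your reading of the shorthand $a=k_t\widetilde\lambda_t$ as $k_t^*(\widetilde\lambda_t)$ matches the intended meaning.
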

\begin{proof}
The second point of Theorem \ref{thm:main_continuous} proved for $\widetilde g$ and the fact that $k(e)=e$ and $(k(x))^{-1}=k(x^{-1})$ imply that for any $k\in K$, $t\in \R_+^*$, the conditional law of $k(\widetilde \ell_t)$ given $\widetilde\lambda_t$ is the distribution $\mathrm{GIG}_{E_+}\left( p ; (k\widetilde \lambda_t)^{-1}, e \right)$, from which the lemma follows.
 \end{proof}
 In a similar fashion to the factorization Lemma \ref{lemma:factorization_gk}, one has the following variant.
\begin{lemma}
\label{lemma:factorization_jordan}
 Let $k = (k_t \ ; \ t \geq 0)$ be a left-invariant Brownian motion on $K$ independent of $\widetilde{g}$, with time sped up by a factor $4$. Then the process $\widetilde{g} k = (\widetilde{g}_t k_t \ ; \ t \geq 0)$ is equal in law to the Brownian motion $(g_t\ ; \ t\ge 0)$ on $G$. 
\end{lemma}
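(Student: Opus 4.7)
The plan is to adapt the proof of Lemma \ref{lemma:factorization_gk} to the Jordan-algebra setting, the only new feature being that the noise of $\widetilde g$ lives in $\mathfrak p$ rather than all of $\mathfrak g$, so the compact-group factor $k$ must now supply the missing $\mathfrak k$-component. First I would apply the Leibniz rule in the Stratonovich convention to $d(\widetilde g_t k_t)$, substitute the SDE \eqref{eq:def_tildeg} for $\widetilde g$, and substitute the SDE for $k_t$ as a left-invariant Brownian motion on $K$ with covariance $4\,\id_{\mathfrak k^{\otimes 2}}$ (this is the precise content of ``time sped up by $4$''), namely $dk_t = k_t \circ \bigl(\sum_{j=1}^{\dim K} 2 Y_j dW_t^{(j)}\bigr)$ for some orthonormal basis $(Y_j)$ of $\mathfrak k$ and independent standard real Brownian motions $W^{(j)}$. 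Commuting $k_t$ past $\widetilde g_t$ via the standard $\Ad_{k_t^{-1}}$ twist yields
$$
d(\widetilde g_t k_t) = (\widetilde g_t k_t) \circ \Big[ \Ad_{k_t^{-1}}\Big( 2 \sum_{i=1}^{\dim E} X_i dB_t^{(i)} + 2p \Lrm(e)\, dt \Big) + 2 \sum_{j=1}^{\dim K} Y_j dW_t^{(j)}\Big].
$$

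The next step is to identify this with Eq. \eqref{eq:def_g_Jordan}. Three ingredients intervene. First, the Cartan decomposition $\mathfrak g = \mathfrak k \oplus \mathfrak p$ is $\Ad(K)$-stable, so that $\Ad_{k_t^{-1}}$ preserves $\mathfrak p$ and $\mathfrak k$ separately. Second, the inner product on $\mathfrak g$ introduced in Section \ref{section:continuous_time_theo} is $\Ad(K)$-invariant, so by L\'evy's characterization the rotated $\mathfrak p$-valued noise $\Ad_{k_t^{-1}}\bigl(\sum_i X_i dB_t^{(i)}\bigr)$ has the same joint law as $\sum_i X_i d\tilde B_t^{(i)}$ for a family of independent standard real Brownian motions $\tilde B^{(i)}$. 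Third, since $\Lrm(e) = \id_E$ is fixed under conjugation by any element of $GL(E)$, the drift term transforms trivially: $\Ad_{k_t^{-1}}(\Lrm(e)) = \Lrm(e)$. Combining these three observations, the $\mathfrak p$-noise and the $\mathfrak k$-noise assemble into an isotropic Brownian increment on $\mathfrak g$ with covariance $4\,\id_{\mathfrak g^{\otimes 2}}$, driven by the concatenated orthonormal basis $(X_i) \sqcup (Y_j)$ of $\mathfrak g$, together with the drift $2p\Lrm(e)\, dt$. This is exactly the defining SDE \eqref{eq:def_g_Jordan} of $g$, so $\widetilde g k$ and $g$ solve the same Stratonovich SDE with the same (deterministic) initial condition and therefore have the same law.

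The calibration of the factor $4$ in the time rescaling of $k$ is precisely what is required to match the covariance $4\,\id_{\mathfrak p^{\otimes 2}}$ of $\widetilde g$ after summing the two pieces: any other factor would break the isotropy on $\mathfrak g$. Beyond this bookkeeping, I do not anticipate a genuine obstacle; the argument is essentially a direct transcription of the proof of Lemma \ref{lemma:factorization_gk}, the only substantive new point being the observation that $\Lrm(e) = \id_E$ is central under conjugation so the drift is preserved by the $\Ad_{k_t^{-1}}$ twist.
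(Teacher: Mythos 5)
Your proposal is correct and follows essentially the same route as the paper's proof: the Stratonovich Leibniz rule, the $\Ad_{k_t^{-1}}$ twist combined with L\'evy's characterization and the $\Ad(K)$-invariance of the metric, the centrality of $\Lrm(e)$ for the drift, and the factor $4$ calibrating the $\kfrak$-covariance against the $\pfrak$-covariance. The only difference is that you spell out the SDE for $k$ and the isotropy bookkeeping slightly more explicitly than the paper does.
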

\begin{proof} Let us write a short proof in our context of Jordan algebra. We begin exactly as in the proof of Lemma \ref{lemma:factorization_gk}. By reproducing the initial computation mutatis mutandis, using the definition of $\widetilde g$ in Eq. \eqref{eq:def_tildeg}, we have
 \begin{align*}
     d( \widetilde{g}_t k_t )
     & = ( \widetilde{g}_t k_t ) \circ \left[ \mathrm{Ad}_{k_t^{-1}}( 2 \sum_{i=1}^{\dim E} X_i dB_t^{(i)} + 2 p \Lrm(e)dt ) + 
     k_t^{-1} dk_t \right]\ .
 \end{align*}
 Now, because of Lévy's characterization of Brownian motion, and the fact that $\mathrm{L}(e)$ is central, the increments $$\mathrm{Ad}_{k_t^{-1}}( 2 \sum_{i=1}^{\dim E} X_i dB_t^{(i)} + 2 p \Lrm(e)dt )$$ have the same distribution as $2 \sum_{i=1}^{\dim E} X_i dB_t^{(i)} + 2 p \Lrm(e)dt$. At this stage, we understand that in order for the increment $k_t^{-1} dk_t$ in $\kfrak$ to have the same covariance of the increment in $\pfrak$, we need to speed up time by a factor $4$. Thus, $\tilde g k$ is indeed an instance of the left-invariant Brownian motion on $G$.
\end{proof}
We have now all the ingredients to prove Theorems \ref{thm:main_continuous} and \ref{thm:dufresne_continuous}.
 \begin{proof}[Proofs of Theorems \ref{thm:main_continuous} and \ref{thm:dufresne_continuous} ]
 Let $g = ( g_t = \widetilde{g}_t k_t \ ; \ t \geq 0)$. This is a left Brownian motion according to Lemma \ref{lemma:factorization_jordan}. Lemma \ref{lemma:cond_prod} implies that the Markov processes $(\lambda,\ell)$ and $\lambda$ are intertwined with the Kernel $\mathcal K$. Thus the Markov function Theorem \cite{rogers1981markov} can be invoked which yields both points of the Main Theorem \ref{thm:main_continuous}.
 Our Dufresne identity for Jordan algebras in Theorem \ref{thm:dufresne_continuous} remains obviously true for $g = ( g_t \ ; \ t \geq 0)$ because $e$ is fixed by the group $K$ so that
 $$ (g_t^*)^{-1}(e) = ( \widetilde{g}_t^*)^{-1}\left( (k_t^*)^{-1}(e) \right) = ( \widetilde{g}_t^*)^{-1}(e) \ .$$

 \end{proof}
 
 \begin{rmk}[Remark on the proof]
 The statements of our Theorems remain valid for any $g = \tilde{g}k$ with $k = (k_t \ ; \ t \geq 0)$ being a left Lévy process in $K$. It follows from the $\Ad(K)$-invariance in law of $\widetilde g$ and independence of its right increments. Of course, such a setting   departs from the setting of Rider--Valk\'o, as the process $g = \tilde{g}k$ is not necessarily a Brownian motion.
 \end{rmk}

\section{Acknowledgements}
The authors are grateful to Philippe Bougerol for fruitful discussions. They are both supported by the ANR project ``POAS: Probability on Algebraic Structures'', grant number ANR-24-CE40-5511 . R. C. is further supported by the Institut Universitaire de France (IUF). M.D. is further supported by the ANR project CORTIPOM ANR-21-CE40-0019.

\appendix

\section{An invariance principle in our context}
\label{section:stroock_varadhan}

In \cite[Theorem 2.4]{stroock1973limit}, Stroock--Varadhan give an invariance principle on Lie groups, with possibly non-homogenous increments. They also rely only on a local chart at the neighborhood of the identity. 

In our setting, their criterion takes a much simpler form due to the special structure of the random walk $\widetilde{g}$. For example, increments are homogeneous, in $P$ and $\exp_G: \gfrak \rightarrow G$ is global diffeomorphism once restricted to $\pfrak$. These elements considerably simplify the hypothesis needed. Also, for the purpose of clarity, only in this Appendix, we distinguish between the exponential maps $\exp_G: \gfrak \rightarrow G$ and $\exp_E: E \rightarrow E_+$. Likewise for the logarithmic maps $\log: U \subset G \rightarrow \gfrak$ and $\log_E: E_+ \rightarrow E$, with $U$ being a neighborhood of the identity on which the Lie group logarithm is defined.

\begin{thm}
\label{thm:adaptation_of_stroock_varadhan}
Consider i.i.d. increments $p^n_1$, $p^n_2$, $\dots$ in $P \subset G$ with $p^n_j = \Prm\left( w_j^n \right)$. And set for $t \geq 0$
$$ \tilde{g}_t = p^n_1 \dots p^n_{\lfloor nt\rfloor} \ ,$$
which is a random walk on $G$. If $p^{(n)} = \Prm\left( w^{(n)} \right)$ has the same law as one of the increments, we assume 
\begin{itemize}
\item the following asymptotic behavior of the mean
$$
    \lim_{n \rightarrow \infty} n \E \log_E w^{(n)} = m \in E \ ,
$$
\item the convergence in law
$$
    \sqrt{n} \log_E w^{(n)} \stackrel{\Lc}{\longrightarrow} X
$$
with $X$ having identity covariance,
\item with the moment $\E \| \sqrt{n} \log_E w^{(n)} \|^{2+\delta}$ remaining bounded, for a given $\delta>0$.
\end{itemize}
Then, for any finite time horizon $T>0$, $\tilde{g}$ converges in law in $\Dc( [0,T] , G)$ towards a diffusion with generator
$$
   2L(m) + 2 \sum_{i}^{\dim E} X_i^2 \ .
$$
\end{thm}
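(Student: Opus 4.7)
The plan is to reduce to the classical Stroock--Varadhan invariance principle \cite[Theorem 2.4]{stroock1973limit} by pulling the increments back to the Lie algebra via $\log_G$, and then reading off the generator. Since each increment $p_j^n = \Prm(w_j^n)$ lies in the submanifold $P = \exp_G(\pfrak) \subset G$ on which the group logarithm is globally defined, we work entirely in the vector space $\pfrak$. Concretely, set $Y^{(n)} := \log_G p^{(n)} \in \pfrak$; the conditions Stroock--Varadhan require are (i) $n\,\E Y^{(n)}$ converges to a drift vector, (ii) $\sqrt{n}\, Y^{(n)}$ converges in law to a Gaussian whose covariance produces the diffusion matrix, and (iii) a uniform $(2+\delta)$-moment bound ensuring Lindeberg-type truncation.

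The structural ingredient bridging the Jordan-theoretic data (where our assumptions live) and the Lie-theoretic data (where Stroock--Varadhan operates) is the identity
$$ \log_G \Prm(\exp_E(x)) = 2\Lrm(x), \qquad x\in E. $$
This follows from the fact that $t\mapsto \Prm(\exp_E(tx))$ is a one-parameter subgroup of $G$ with initial velocity $2\Lrm(x)\in\pfrak$; in the matrix model $E=S_r(\R)$ it reduces to $\Prm(e^x)(y)=e^x y e^x=\exp(\Lrm_x+R_x)(y)$ with commuting $\Lrm_x,R_x$. Combined with the fact that $\Lrm:E\to\pfrak$ is an isometry by our choice of inner product on $\pfrak$, this identity gives $Y^{(n)} = 2\Lrm(\log_E w^{(n)})$, so that the three assumed conditions translate verbatim into Stroock--Varadhan's hypotheses: the drift condition becomes $n\,\E Y^{(n)}\to 2\Lrm(m)\in\pfrak$, the CLT gives $\sqrt{n}\,Y^{(n)} \stackrel{\Lc}{\longrightarrow} 2\Lrm(X)$ which is Gaussian on $\pfrak$ with covariance $4\,\mathrm{id}_{\pfrak^{\otimes 2}}$, and the moment bound is immediate since $\|2\Lrm(\cdot)\|_{\pfrak}=2\|\cdot\|_E$.

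Applying \cite[Theorem 2.4]{stroock1973limit} then produces weak convergence of $\tilde g^{(n)}$ in $\mathcal{D}([0,T], G)$ to the left-invariant diffusion whose infinitesimal generator, in the canonical chart at the identity, reads
$$ 2\Lrm(m) + \frac{1}{2}\sum_{i=1}^{\dim E}(2X_i)^2 = 2\Lrm(m) + 2\sum_{i=1}^{\dim E} X_i^2, $$
which is the announced expression. The main obstacle is verifying the bridge identity $\log_G \Prm(\exp_E x) = 2\Lrm(x)$ together with the isometric property of $\Lrm$ --- both essentially structural facts about symmetric cones which we would extract from \cite{faraut1994analysis} --- rather than anything analytic. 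Once those are in hand, all remaining steps (moment computations, Lindeberg verification, and identification of the quadratic form) are routine.
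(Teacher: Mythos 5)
Your proposal is correct and follows essentially the same route as the paper's own argument: pulling the increments back to $\pfrak$ via $\log_G$, using the identity $\Prm(\exp_E x) = \exp_G(2\Lrm(x))$ (which the paper cites from \cite[Prop.\ II.3.4]{faraut1994analysis}) to translate the three hypotheses into Stroock--Varadhan's, checking Lindeberg via the $(2+\delta)$-moment bound, and reading off the generator from the rescaled drift and covariance. The only cosmetic difference is that you sketch the bridge identity directly while the paper cites it, and note that $X$ need not actually be Gaussian for the argument --- only its covariance enters.
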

\begin{proof}[Pointers to proof]
In the notations of \cite{stroock1973limit}, we need a local chart around $\id_G$ given by a map $\phi$. In our particular case, because $\exp_G: \pfrak \rightarrow P$ and $\log_G: P \rightarrow \pfrak$ are global diffeomorphisms, we can take $\phi = \log_G$. Moreover, we know also that $\log_{E}: E_+ \rightarrow E$ is well-defined and bijective by functional calculus. Thanks to \cite[Prop. II.3.4]{faraut1994analysis}, we have
\begin{align}
\label{eq:magic}
\forall x \in E,\ 
\Prm\left[ \exp_E\left( x \right) \right] & = \exp_G\left( 2 \Lrm(x) \right) \ ,
\end{align}
so that
\begin{align}
\label{eq:magic2}
\log_G p^{(n)} & = 2 \Lrm\left( \log_E w^{(n)} \right) \ .
\end{align}

This allows to define an average increment $g_{n,j} \in G$ as in Eq. \cite[(2.8)]{stroock1973limit}
via
$ \phi( g_{n,j} ) = \E \phi(p_j^n) = \E \phi(p^{(n)}) $. 
Thanks to Eq. \eqref{eq:magic2}, that is to say $g_{n,j} = \exp_G\left( \E \log_G(p^{(n)}) \right) = \exp_G\left( 2 \Lrm[ \E \log_E w^{(n)} ] \right)$.
We also have a covariance $a_{n,j} \in \gfrak \times \gfrak$ as in Eq. \cite[(2.10)]{stroock1973limit} via
$$ a_{n,j} = \E \left( \phi(p_j^n) - \phi(g_{n,j}) \right) \otimes  \left( \phi(p_j^n) - \phi(g_{n,j}) \right) \ ,$$
which is also independent of $j$ and is nothing but the covariance of the random variable $\log_G(p^{(n)})$, denoted $\mathrm{Cov}\left( \log_G(p^{(n)}) \right)$ 

 On the one hand, the cumulated average increments is
$$
   g_{n,1} \dots g_{n,\lfloor nt\rfloor}
   = \exp_G\left( 2 \Lrm[ \E \log_E w^{(n)} ] \right)^{\lfloor nt\rfloor}
   = \exp_G\left( \lfloor nt\rfloor 2 \Lrm[ \E \log_E w^{(n)} ] \right) \ ,
$$
and the cumulated covariance is
$$
   a_{n,1} + \dots  + a_{n,\lfloor nt\rfloor}
   = \lfloor nt\rfloor \mathrm{Cov}\left( \log_G(p^{(n)}) \right) 
   = 4 \lfloor nt\rfloor \mathrm{Cov}\left( \Lrm \log_E(w^{(n)}) \right) \ .
$$

Given our hypotheses the cumulated average and the cumulated covariance do converge uniformly in $t \in [0,T]$, for any given finite time horizon $T>0$, so that Theorem 2.4 in \cite{stroock1973limit} is valid.

Finally, notice that the Lindeberg condition given in \cite[Eq. (2.6)]{stroock1973limit} 
$$
    \lim_{n \rightarrow \infty} \sum_{j=1}^{\lfloor nT \rfloor} \P\left( p^n_j \notin U \right) = 0 \ ,
$$
for any neighborhood $U$ of $\id_G$, is implied by the $(2+\delta)$-moment condition. Indeed, thanks to an $\varepsilon_U>0$ of room depending on $U$ and the Markov inequality 
\begin{align*}
    \sum_{j=1}^{\lfloor nT \rfloor} \P\left( p^n_j \notin U \right)
= & \ \lfloor nT \rfloor \ \P\left( \log_G p^{(n)} \notin \log U \right)\\
= & \ \lfloor nT \rfloor \ \P\left( 2 \Lrm\left( \log_E w^{(n)} \right) \notin \log U \right)\\
\leq & \ nT \ \P\left( \| \log_E w^{(n)} \|_E \geq \varepsilon_U \right)\\
\leq & \ nT \ \frac{\E \| \sqrt{n} \log_E w^{(n)} \|^{2+\delta}}{n^{1+\half \delta}\varepsilon_U} \stackrel{n \rightarrow \infty}{\longrightarrow} 0 \ .
\end{align*}
\end{proof}

\bibliographystyle{alpha}
\bibliography{Vilko.bib}

\end{document}